\newtheorem{theorem}{Theorem}[section]
\newtheorem{lemma}[theorem]{Lemma}
\newtheorem{example}[theorem]{Example}
\newtheorem{proposition}[theorem]{Proposition}
\newtheorem{remark}[theorem]{Remark}
\title{The Booleanization of an inverse semigroup}
\author{Mark V. Lawson}
\address{Mark V.Lawson, Department of Mathematics
and the
Maxwell Institute for Mathematical Sciences, 
Heriot-Watt University,
Riccarton,
Edinburgh EH14 4AS, 
UNITED KINGDOM}
\email{m.v.lawson@hw.ac.uk}
\begin{document} 

\begin{abstract} 
We prove that the forgetful functor from the category of Boolean inverse semigroups to the category of inverse semigroups with zero has a left adjoint.
This left adjoint is what we term the `Booleanization'.
We establish the exact theoretical connection between the Booleanization of an inverse semigroup and Paterson's universal groupoid of the inverse semigroup
and we explicitly compute the concrete Booleanization of the polycyclic inverse monoid $P_{n}$ and demonstrate its affiliation with the Cuntz-Toeplitz algebra.
\end{abstract}
\maketitle

\section{Introduction}

The goal of this paper is to prove the following theorem and to study some of its consequences.

\begin{theorem}[Booleanization]\label{them:bc}
Let $S$ be an inverse semigroup with zero.
Then there is a Boolean inverse semigroup $\mathsf{B}(S)$ together with an embedding $\beta \colon S \rightarrow \mathsf{B}(S)$
such that if $\theta \colon S \rightarrow T$ is any homomorphism to a Boolean inverse semigroup $T$ then there is a unique morphism $\gamma \colon \mathsf{B}(S) \rightarrow T$
such that $\theta = \beta \gamma$.
\end{theorem}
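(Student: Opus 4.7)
My plan is to construct $\mathsf{B}(S)$ concretely as the inverse semigroup of compact-open bisections of an ample étale Hausdorff groupoid canonically associated to $S$, in the spirit of Paterson's universal groupoid construction mentioned in the abstract. Concretely, take $X$ to be the space of proper filters on the meet-semilattice $E(S)$, equipped with the patch topology generated by the basic clopen sets $U_{e} = \{F \in X : e \in F\}$ for $e \in E(S)$ together with their relative complements. The semigroup $S$ acts on $X$ by partial homeomorphisms induced by the partial conjugation $e \mapsto ses^{*}$ on $E(S)$, and the resulting groupoid of germs $\mathscr{G}(S)$ is ample and étale.

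I would then define $\mathsf{B}(S)$ to be the inverse semigroup of compact-open local bisections of $\mathscr{G}(S)$ under setwise multiplication and inversion. Standard facts about ample groupoids make $\mathsf{B}(S)$ a Boolean inverse semigroup: its idempotent semilattice is the generalized Boolean algebra of compact-open subsets of $X$, and finite compatible joins exist as unions of bisections. The canonical map $\beta \colon S \to \mathsf{B}(S)$ sending $s$ to the basic bisection $\{[s,F] : s^{*}s \in F\}$ is a homomorphism, and injectivity follows from the fact that filters on $E(S)$ separate distinct elements of $S$.

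For the universal property, the basic bisections $\beta(s)$ form a basis for the topology of $\mathscr{G}(S)$, so every compact-open bisection is a finite union — equivalently, a finite compatible join in $\mathsf{B}(S)$ — of basic bisections. Given a homomorphism $\theta \colon S \to T$ into a Boolean inverse semigroup, the only possible definition is
\[ \gamma\bigl(\beta(s_{1}) \vee \cdots \vee \beta(s_{n})\bigr) := \theta(s_{1}) \vee \cdots \vee \theta(s_{n}), \]
which is meaningful because $\theta$ preserves compatibility and $T$ admits all finite compatible joins. Uniqueness of $\gamma$ is then immediate, since $\beta(S)$ generates $\mathsf{B}(S)$ under finite compatible joins and any morphism of Boolean inverse semigroups must preserve such joins.

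The principal obstacle is showing that $\gamma$ is well-defined, because the same compact-open bisection typically admits many distinct presentations as a finite compatible join of basic bisections. I would address this via a common-refinement argument: any two such presentations of the same bisection admit a simultaneous refinement by a finite partition into sufficiently small basic clopen pieces of $X$, after which the generalized Boolean algebra structure of $E(T)$ together with the compatibility-preserving property of $\theta$ forces the two resulting joins in $T$ to agree. Once well-definedness is secured, verifying that $\gamma$ is an inverse semigroup homomorphism reduces to routine distributivity manipulations between multiplication and compatible joins.
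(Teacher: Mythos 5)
Your construction of the ambient object is essentially Paterson's universal groupoid, which the paper does prove (Theorem~\ref{them:paterson}) yields $\mathsf{B}(S)$, so that part of the plan is sound (modulo the incidental point that this groupoid need not be Hausdorff --- cf.\ Proposition~\ref{prop:mao}). The first genuine gap is your claim that every compact-open bisection is a finite compatible join of basic bisections $\beta(s)$. In the patch topology the compact-open sets are finite unions of \emph{differences} $\beta(a)\setminus(\beta(a_{1})\cup\cdots\cup\beta(a_{m}))$, and such a difference is in general not a union of sets $\beta(s)$: already for a semilattice that is a chain $0<e_{1}<e_{2}$, the set $V_{e_{2}}\setminus V_{e_{1}}$ is a singleton prime filter not of the form $V_{x}$. (If your claim were true, $\mathsf{B}(S)$ would coincide with the distributive completion $\mathsf{D}(S)$, which is usually not Boolean.) Consequently your formula $\gamma(\beta(s_{1})\vee\cdots\vee\beta(s_{n}))=\theta(s_{1})\vee\cdots\vee\theta(s_{n})$ does not define $\gamma$ on all of $\mathsf{B}(S)$; the correct prescription, and the one the paper uses, is $\gamma\bigl(\bigcup_{i}(\beta(a_{i})\setminus\beta(b_{i}))\bigr)=\bigvee_{i}\theta(a_{i})\setminus\theta(b_{i})$, exploiting the relative complement operation of the Boolean codomain $T$. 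Uniqueness still holds, but for a different reason than you give: a morphism into a Boolean inverse semigroup automatically preserves relative complements of comparable elements (by Lemma~\ref{lem:trump}, since it preserves orthogonality and binary compatible joins), so it is determined on the differences once it is determined on $\beta(S)$.

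The second, and deeper, gap is well-definedness, which you correctly identify as the principal obstacle but do not resolve. The proposed ``common refinement by sufficiently small basic clopen pieces of $X$'' does not work as stated, because those clopen pieces are themselves Boolean combinations of the $U_{e}$ and are not named by elements of $S$; the only datum available is a homomorphism $\theta\colon S\rightarrow T$ of inverse semigroups with zero, which cannot be applied to such pieces. The paper's resolution is to translate the set-theoretic relations $V_{a;b}\subseteq V_{c;d}$ and $V_{a;b}=\bigcup_{i}V_{a_{i};b_{i}}$ into purely algebraic identities in $S$ involving only binary compatible joins and compatible meets (Lemma~\ref{lem:portmanteau}, Lemma~\ref{lem:valery}, Lemma~\ref{lem:garl}) --- operations which any homomorphism preserves (Remark~\ref{rem:lepen}) --- and only then push the presentations through $\theta$ (Proposition~\ref{prop:gaulle}, Proposition~\ref{prop:britannia}). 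Some such translation step, or an equivalent device, is unavoidable and is missing from your argument.
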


Although we originally constructed $\mathsf{B}(S)$ in \cite{LL}, we restricted the class of homomorphisms considered
and so did not prove universality in full generality.
This lacuna will be filled here.

We refer the reader to \cite{Law1} for background on inverse semigroups,  to \cite{Resende} for background on \'etale groupoids
and to  \cite{Wehrung2} for the theory of Boolean inverse semigroups.
All distributive lattices will be assumed to have a bottom but not necessarily a top;
if they have a top we say they are {\em unital}.
We use the term {\em Boolean algebra} to mean what is often termed a `generalized Boolean algebra'.
Thus a Boolean algebra is a distributive lattice in which each principal order ideal is a unital Boolean algebra.
The inverse semigroups in this paper will usually have a zero and for those that do homomorphisms between them will be required to preserve zero.
The order on inverse semigroups will be the {\em natural partial order}.
The semilattice of idempotents of an inverse semigroup $S$ is denoted by $\mathsf{E}(S)$.
More generally, if $X$ is any subset of $S$ then $\mathsf{E}(X) = \mathsf{E}(S) \cap X$.
In addition, define
$$X^{\uparrow} = \{s \in S \colon \exists x \in X, x \leq s \}
\text{ and }
X^{\downarrow} = \{s \in S \colon \exists x \in X, s \leq x \}.$$
If $X = \{x\}$ then we write simply $x^{\uparrow}$ and $x^{\downarrow}$, respectively.
If $s$ is an element of an inverse semigroup define $\mathbf{d}(s) = s^{-1}s$ and $\mathbf{r}(s) = ss^{-1}$.
The {\em compatibility relation} $\sim$ in an inverse semigroup  
is defined by $s \sim t$ if and only if $s^{-1}t$ and $st^{-1}$ are both idempotents.
A set that consists of elements which are pairwise compatible is said to be {\em compatible}.
The {\em orthogonality relation} $\perp$ in an inverse semigroup with zero 
is defined by $s \perp t$ if and only if $s^{-1}t = 0 = st^{-1}$.
Observe that  $s \perp t$ 
if and only if 
$\mathbf{d}(a) \perp \mathbf{d}(b)$ and $\mathbf{r}(a) \perp \mathbf{r}(b)$.
A set that consists of elements which are pairwise orthogonal is said to be {\em orthogonal}.
The proof of the following is straightforward.

\begin{lemma}\label{lem:jersey} Let $S$ be an inverse semigroup with zero and let $s,t,a \in S$.
If $s \perp t$ then both $sa \perp ta$ and $as \perp at$.
\end{lemma}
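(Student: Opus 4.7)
The plan is to verify the four defining equalities $(sa)^{-1}(ta)=0$, $(sa)(ta)^{-1}=0$, $(as)^{-1}(at)=0$, and $(as)(at)^{-1}=0$ directly from the hypothesis $s^{-1}t=0=st^{-1}$, using only elementary manipulations in the inverse semigroup.

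Two of the four are trivial because the troublesome product appears sandwiched between $a$ and $a^{-1}$. Namely,
\[
(sa)^{-1}(ta) = a^{-1}(s^{-1}t)a = a^{-1}\cdot 0\cdot a = 0,
\qquad
(as)(at)^{-1} = a(st^{-1})a^{-1} = a\cdot 0\cdot a^{-1}=0.
\]
The other two products leave the idempotents $aa^{-1}$ and $a^{-1}a$ trapped in the middle: we get $(sa)(ta)^{-1} = s(aa^{-1})t^{-1}$ and $(as)^{-1}(at) = s^{-1}(a^{-1}a)t$. For these I would invoke the standard fact that in an inverse semigroup idempotents can be shuttled past elements, since $\mathsf{E}(S)$ is a normal subsemigroup: for any $s\in S$ and any idempotent $e$ one has $se = (ses^{-1})s$ and $es = s(s^{-1}es)$, because $s^{-1}s$ and $e$ commute (both are idempotents).

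Applying this to $e=aa^{-1}$ gives $s(aa^{-1}) = f\,s$ with $f = s(aa^{-1})s^{-1}\in\mathsf{E}(S)$, hence
\[
(sa)(ta)^{-1} = s(aa^{-1})t^{-1} = f(st^{-1}) = f\cdot 0 = 0.
\]
Symmetrically, with $e=a^{-1}a$ we have $(a^{-1}a)t = t\,e'$ for a suitable idempotent $e'$, so
\[
(as)^{-1}(at) = s^{-1}(a^{-1}a)t = (s^{-1}t)e' = 0\cdot e' = 0.
\]

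This establishes $sa\perp ta$ and $as\perp at$. The main obstacle is purely notational: one must remember the idempotent-shifting identity to handle the middle two cases, but once that trick is in hand the computation is essentially a one-liner in each case, so I do not expect any substantive difficulty.
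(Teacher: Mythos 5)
Your proof is correct: the two ``sandwiched'' products vanish immediately, and the other two are handled by the standard conjugation identities $se=(ses^{-1})s$ and $et=t(t^{-1}et)$, which are valid because idempotents commute in an inverse semigroup. The paper omits the proof as straightforward, and your direct computation is exactly the argument intended.
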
 

\begin{lemma}\label{lem:buffs} In an arbitrary inverse semigroup, let $a \sim b$.
Then the following are equivalent:
\begin{enumerate}
\item $a \perp b$.
\item $\mathbf{d}(a) \perp \mathbf{d}(b)$.
\item $\mathbf{r}(a) \perp \mathbf{r}(b)$.
\end{enumerate}
\end{lemma}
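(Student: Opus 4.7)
My plan is to reduce the entire lemma to the observation, stated immediately before it, that $a \perp b$ iff both $\mathbf{d}(a) \perp \mathbf{d}(b)$ and $\mathbf{r}(a) \perp \mathbf{r}(b)$. That observation gives (1) $\Rightarrow$ (2) and (1) $\Rightarrow$ (3) for free, so only the reverse implications under the extra assumption $a \sim b$ need real attention.

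The key auxiliary fact I would isolate first is the following: if $a \sim b$, then
\[
a^{-1}b \;\leq\; \mathbf{d}(a)\mathbf{d}(b) \qquad \text{and} \qquad ab^{-1} \;\leq\; \mathbf{r}(a)\mathbf{r}(b)
\]
in the semilattice of idempotents. Both inequalities rest on the idempotency of $e := a^{-1}b$ (and dually of $ab^{-1}$) together with the standard inverse-semigroup identity $a^{-1}aa^{-1} = a^{-1}$. Concretely, since $e = e^{-1}$, we have $e = e\cdot e^{-1} = a^{-1}(bb^{-1})a$, so
\[
\mathbf{d}(a)\cdot e \;=\; (a^{-1}a)(a^{-1}bb^{-1}a) \;=\; (a^{-1}aa^{-1})(bb^{-1})a \;=\; a^{-1}(bb^{-1})a \;=\; e,
\]
which gives $e \leq \mathbf{d}(a)$; writing $e = e^{-1}\cdot e = b^{-1}(aa^{-1})b$ gives $e \leq \mathbf{d}(b)$ symmetrically, and the bound on $ab^{-1}$ is dual.

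Granted this, assume (2), so $\mathbf{d}(a)\mathbf{d}(b) = 0$. The first bound forces $a^{-1}b = 0$, whence
\[
\mathbf{r}(a)\mathbf{r}(b) \;=\; (aa^{-1})(bb^{-1}) \;=\; a(a^{-1}b)b^{-1} \;=\; 0,
\]
establishing (3); the second bound then forces $ab^{-1} = 0$ as well, and together these yield (1). The implication (3) $\Rightarrow$ (1) follows by the dual argument, or simply by applying the case just proved to $a^{-1}, b^{-1}$ (inversion preserves both $\sim$ and $\perp$ and exchanges $\mathbf{d}$ with $\mathbf{r}$).

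The only real obstacle is spotting the auxiliary bound; compatibility is precisely what one needs to locate the ``cross terms'' $a^{-1}b$ and $ab^{-1}$ beneath the relevant idempotent meets, after which the remainder is a couple of one-line semilattice manipulations.
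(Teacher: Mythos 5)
Your proof is correct and follows essentially the same route as the paper's: both arguments exploit the idempotency of the cross term $a^{-1}b$ together with the identity $\mathbf{r}(a)\mathbf{r}(b)=a(a^{-1}b)b^{-1}$ to transfer orthogonality from the domain idempotents to the range idempotents, then dispose of the remaining implications by symmetry. The only cosmetic difference is that the paper first notes that (2) is equivalent to $ab^{-1}=0$ and then bounds $a(a^{-1}b)b^{-1}\leq ab^{-1}$, whereas you bound $a^{-1}b\leq\mathbf{d}(a)\mathbf{d}(b)$ and kill $a^{-1}b$ directly; both auxiliary bounds are verified correctly.
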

\begin{proof} By symmetry, it is enough to prove the equivalence of (1) and (2).
Clearly, (1) implies (2).
We prove that (2) implies (1).
To do this, we need to prove that $\mathbf{r}(a) \perp \mathbf{r}(b)$.
We are given that $\mathbf{d}(a) \perp \mathbf{d}(b)$, which is equivalent to $ab^{-1} = 0$,
and and also given that $a^{-1}b$ is an idempotent.
Therefore $aa^{-1}bb^{-1} = a(a^{-1}b)b^{-1} \leq ab^{-1} = 0$.
We have proved that $\mathbf{r}(a) \perp \mathbf{r}(b)$ and so $a \perp b$.
\end{proof}

An inverse semigroup is said to be a {\em $\wedge$-semigroup} if and only if it has all binary meets.
An inverse semigroup is said to be {\em distributive} if it has binary joins of compatible elements and multiplication distributes over such joins.
A {\em pseudogroup} is an inverse semigroup with zero in which every compatible subset has a join
and where multiplication distributes over all such joins.
Observe that the semilattice of idempotents of a pseudogroup is a {\em frame} in the sense of \cite{J}.
A distributive inverse semigroup is {\em Boolean} if its semilattice of idempotents is a Boolean algebra.
A {\em morphism} between distributive inverse semigroups is a homomorphism of inverse semigroups with zero
that maps binary compatible joins to binary compatible joins.
A {\em $\wedge$-morphism} between distributive $\wedge$-semigroups preserves binary meets.

Proofs of the following can be found in \cite[Lemma~1.4.11, Lemma~1.4.12, Lemma~1.4.14]{Law1}.

\begin{lemma}\label{lem:macron} Let $S$ be an inverse semigroup.
\begin{enumerate}
\item $s \sim t$ if and only if $s \wedge t$ exists and 
$\mathbf{d}(s \wedge t) = \mathbf{d}(s)\mathbf{d}(t)$ 
and
$\mathbf{r}(s \wedge t) = \mathbf{r}(s)\mathbf{r}(t)$.
\item If $s \sim t$ then $s \wedge t = st^{-1}t = ss^{-1}t$.
\item $s^{\downarrow}$ is a compatible set.
\end{enumerate}
\end{lemma}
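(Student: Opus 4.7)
The plan separates part~(1) into its two implications and delivers parts~(2) and (3) as by-products, since (2) falls out of the forward argument in (1) and (3) is a short direct computation. For the forward direction of (1), assume $s \sim t$, so $st^{-1}$ and $s^{-1}t$ are idempotents and hence self-inverse: $st^{-1} = ts^{-1}$ and $s^{-1}t = t^{-1}s$. Expanding $(st^{-1})^2 = st^{-1}$ as $s(t^{-1}s)t^{-1} = st^{-1}$ and substituting $t^{-1}s = s^{-1}t$ yields $ss^{-1}tt^{-1} = st^{-1}$; right-multiplying by $t$ gives $ss^{-1}t = st^{-1}t$, and I call this common value $u$. The factorisations $u = s(t^{-1}t)$ and $u = (ss^{-1})t$ exhibit $u \leq s$ and $u \leq t$, and the identities $\mathbf{d}(u) = s^{-1}st^{-1}t$ and $\mathbf{r}(u) = ss^{-1}tt^{-1}$ come from direct computation of $u^{-1}u$ from $u = st^{-1}t$ and of $uu^{-1}$ from $u = ss^{-1}t$. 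For maximality, any $w \leq s, t$ has $w^{-1}w \leq s^{-1}s \cdot t^{-1}t = u^{-1}u$, and then $u \cdot w^{-1}w = st^{-1}t \cdot w^{-1}w = s w^{-1}w = w$, so $w \leq u$.

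For the reverse direction of (1), suppose $v := s \wedge t$ exists with $\mathbf{d}(v) = s^{-1}st^{-1}t$ and $\mathbf{r}(v) = ss^{-1}tt^{-1}$. The standard forms $v = s \cdot v^{-1}v$ and $v = vv^{-1} \cdot t$ simplify to $v = st^{-1}t$ and $v = ss^{-1}t$ respectively. Then $vt^{-1} = st^{-1}$ from the first, while $v \leq t$ gives $v = vv^{-1}t$, so $vt^{-1} = vv^{-1}tt^{-1}$ is a product of commuting idempotents and hence idempotent. Thus $st^{-1}$ is idempotent and equals $ts^{-1}$. The delicate step is then to obtain $s^{-1}t$ idempotent. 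Equating the two formulas for $v$ gives $ss^{-1}t = st^{-1}t$, and left-multiplying by $s^{-1}$ yields $s^{-1}t = s^{-1}st^{-1}t$. Squaring, $(s^{-1}t)^2 = s^{-1}(ts^{-1})t = s^{-1}(st^{-1})t = s^{-1}st^{-1}t = s^{-1}t$, where the middle equality uses the just-established $ts^{-1} = st^{-1}$. Hence $s \sim t$.

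For part (3), given $s, t \leq u$, write $s = ss^{-1}u$ and $t = tt^{-1}u$. Then $st^{-1} = ss^{-1}uu^{-1}tt^{-1}$ is a product of commuting idempotents, and $s^{-1}t = u^{-1}(ss^{-1}tt^{-1})u$ is a conjugate of the idempotent $ss^{-1}tt^{-1}$ by $u$; since $s, t \leq u$ forces $ss^{-1}, tt^{-1} \leq uu^{-1}$, the inner idempotent lies below $uu^{-1}$, which makes the conjugate idempotent as well. The main obstacle throughout is the reverse direction of (1), specifically the independent extraction of $s^{-1}t$ idempotent; the pivotal identity $s^{-1}t = s^{-1}st^{-1}t$ that closes this step is not visible until both expressions $v = st^{-1}t$ and $v = ss^{-1}t$ for the meet have been derived.
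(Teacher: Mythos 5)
Your proof is correct and follows the standard argument: the paper does not prove this lemma itself but cites \cite[Lemma~1.4.11, Lemma~1.4.12, Lemma~1.4.14]{Law1}, where essentially the same computations appear (deriving $ss^{-1}t = st^{-1}t$ from idempotency of $st^{-1}$ and $s^{-1}t$, verifying this element is the meet with the stated domain and range, and checking compatibility of elements below a common upper bound via conjugation of idempotents). The only cosmetic remark is that once you reach $s^{-1}t = s^{-1}st^{-1}t$ in the reverse direction you are already done, since a product of idempotents is idempotent, so the subsequent squaring step is redundant.
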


\noindent
{\bf Definition.} If $s \sim t$ then the meet $s \wedge t$ guaranteed by part (1) of Lemma~\ref{lem:macron} is called a
{\em compatible meet}.  Observe  by part (2) of  Lemma~\ref{lem:macron} that such meets can be constructed purely algebraically. \\

\begin{remark}\label{rem:lepen}{\em Morphisms between distributive inverse semigroups are required to preserve binary compatible joins
but are not required to preserve any binary meets that might exist.
However, compatible meets are preserved by any homomorphism.
This simple observation will prove useful in establishing the universality of our construction. }
\end{remark}

We shall need a little more on the algebraic properties of Boolean inverse semigroups.
Let $B$ be a Boolean algebra.
If $e,f \in B$ are such that $f \leq e$ then there is a unique element $e \setminus f$ such that
$e = (e \setminus f) \vee f$ and $(e \setminus f) \perp f$. 
In fact, $e \setminus f$ is the largest element of $B$ that is less than $e$ and orthogonal to $f$.
Now let $S$ be an inverse semigroup whose semilattice of idempotents forms a Boolean algebra.
If $a,b \in S$ with $b \leq a$, define $a \setminus b = a(\mathbf{d}(a) \setminus \mathbf{d}(b))$.
Observe that $b \perp (a \setminus b)$ by Lemma~\ref{lem:buffs}.
If $S$ is, in fact, a Boolean inverse semigroup then $a = b \vee (a \setminus b)$;
in this case, $a \setminus b$ is the largest element less than or equal to $a$ which is orthogonal to $b$.
The proof of the following is straightforward.

\begin{lemma}\label{lem:trump} Let $S$ be a Boolean inverse monoid.
Let $b \leq a$.
Suppose that $x \leq a$, $x \perp b$ and $a = b \vee x$.
Then $x = a \setminus b$.
\end{lemma}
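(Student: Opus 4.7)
The plan is to exploit the uniqueness of relative complements in the Boolean algebra $\mathsf{E}(S)$: I will show that $\mathbf{d}(x)$ must coincide with $\mathbf{d}(a) \setminus \mathbf{d}(b)$, and then recover $x$ from $\mathbf{d}(x)$ using $x \leq a$.

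First I would apply $\mathbf{d}$ to the identity $a = b \vee x$. Since $S$ is distributive (in fact Boolean) and $\mathbf{d}$ preserves binary compatible joins in such a semigroup, this yields $\mathbf{d}(a) = \mathbf{d}(b) \vee \mathbf{d}(x)$. From the hypothesis $x \perp b$ one immediately has $\mathbf{d}(x) \perp \mathbf{d}(b)$ (this is part of the observation about $\perp$ recorded just before Lemma~\ref{lem:jersey}). We also have $\mathbf{d}(x) \leq \mathbf{d}(a)$ since $x \leq a$. At this point the setup is exactly the one used in the paragraph preceding the lemma to define relative complement: in the Boolean algebra $\mathsf{E}(S)$, the element $\mathbf{d}(a) \setminus \mathbf{d}(b)$ is characterized uniquely by being less than $\mathbf{d}(a)$, orthogonal to $\mathbf{d}(b)$, and joining with $\mathbf{d}(b)$ to give $\mathbf{d}(a)$. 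Hence $\mathbf{d}(x) = \mathbf{d}(a) \setminus \mathbf{d}(b)$.

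Next I would recover $x$. From $x \leq a$ and the standard description of the natural partial order, we have $x = a\mathbf{d}(x)$. Substituting the equation just obtained gives
\[
x \;=\; a\bigl(\mathbf{d}(a) \setminus \mathbf{d}(b)\bigr) \;=\; a \setminus b,
\]
by definition of $a \setminus b$. This completes the proof.

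I do not see a real obstacle here: the argument is essentially a bookkeeping exercise that transfers the uniqueness of complement from the idempotent Boolean algebra up to $S$ via the fact that elements below $a$ are determined by their domain idempotent. The only point that requires care is the appeal to $\mathbf{d}(b \vee x) = \mathbf{d}(b) \vee \mathbf{d}(x)$ for compatible joins; but this is a standard property of distributive inverse semigroups and is used freely in the paper.
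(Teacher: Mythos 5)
Your argument is correct, and it is essentially the intended one: the paper omits the proof as ``straightforward,'' and the natural way to supply it is exactly what you do --- pass to $\mathsf{E}(S)$, use the uniqueness of the relative complement $\mathbf{d}(a)\setminus\mathbf{d}(b)$ to identify $\mathbf{d}(x)$, and then recover $x = a\mathbf{d}(x) = a(\mathbf{d}(a)\setminus\mathbf{d}(b)) = a\setminus b$ from $x \leq a$. All the ingredients you invoke ($\mathbf{d}$ preserving compatible joins, $x\perp b \Rightarrow \mathbf{d}(x)\perp\mathbf{d}(b)$, and the description of the natural partial order) are available in the paper, so there is nothing to fix.
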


The following shows how to write a compatible join of two elements as a binary orthogonal join of two elements.

\begin{lemma}\label{lem:gove} Let $a$ and $b$ be compatible elements in an inverse semigroup whose semilattice of idempotents forms a Boolean algebra under the natural partial order.
\begin{enumerate}
\item $a \setminus (a \wedge b)$ and $b$ are orthogonal.
\item $a \vee b$ exists if and only if  $(a \setminus (a \wedge b)) \vee b$ exists in which case they are equal.
\end{enumerate}
\end{lemma}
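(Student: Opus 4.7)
The plan is to prove part~(1) via Lemma~\ref{lem:buffs} combined with a short calculation in the Boolean algebra of idempotents, and to prove part~(2) by exploiting the fact that multiplication in an inverse semigroup distributes over any binary compatible join that exists (see~\cite{Law1}). Write $c := a \setminus (a \wedge b)$ throughout.

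For part~(1), I would first observe that $c \leq a$ together with $a \sim b$ forces $c \sim b$. By Lemma~\ref{lem:buffs}, it then suffices to check that $\mathbf{d}(c) \perp \mathbf{d}(b)$. Since Lemma~\ref{lem:macron}(1) gives $\mathbf{d}(a \wedge b) = \mathbf{d}(a)\mathbf{d}(b)$, we have $\mathbf{d}(c) = \mathbf{d}(a) \setminus \mathbf{d}(a)\mathbf{d}(b)$; this is the complement of $\mathbf{d}(a)\mathbf{d}(b)$ inside $\mathbf{d}(a)$ in the Boolean algebra, whence $\mathbf{d}(c) \cdot \mathbf{d}(b) = \mathbf{d}(c) \cdot \mathbf{d}(a)\mathbf{d}(b) = 0$.

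For part~(2) I would begin with the cleaner direction $(\Leftarrow)$. Suppose $c \vee b = u'$ exists, and multiply on the right by $\mathbf{d}(a)$, applying distributivity. Since $\mathbf{d}(c) \leq \mathbf{d}(a)$ one has $c\mathbf{d}(a) = c$, while Lemma~\ref{lem:macron}(2) gives $b\mathbf{d}(a) = a \wedge b$; hence $u'\mathbf{d}(a) = c \vee (a \wedge b)$, an element dominated by $a$. On the other hand $\mathbf{d}(u'\mathbf{d}(a)) = \mathbf{d}(a)$, because a Boolean-algebra computation identifies $\mathbf{d}(u') = \mathbf{d}(c) \vee \mathbf{d}(b)$ with $\mathbf{d}(a) \vee \mathbf{d}(b) \geq \mathbf{d}(a)$. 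An element $\leq a$ whose domain equals $\mathbf{d}(a)$ must equal $a$, so $a \leq u'$; since any upper bound of $\{a,b\}$ automatically dominates $c$, this forces $u' = a \vee b$.

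The direction $(\Rightarrow)$ is dual. From $a \vee b = u$, distributing multiplication by $p := \mathbf{d}(c)$ on the right gives $up = ap \vee bp = c \vee 0 = c$ (using $bp = 0$ from part~(1)), and distributing by $\mathbf{d}(b)$ gives $u\mathbf{d}(b) = b$; this exhibits $c$ and $b$ as the two orthogonal restrictions of $u$ corresponding to the Boolean partition $\mathbf{d}(u) = p \vee \mathbf{d}(b)$, from which $c \vee b = u$ should follow by a comparison of upper bounds. The step I expect to be the main obstacle is precisely this last LUB verification in $(\Rightarrow)$: it amounts to lifting the Boolean join $p \vee \mathbf{d}(b)$ at the level of idempotents to a genuine join of $c$ and $b$ in $S$, and this is where the interplay between the Boolean-algebra structure of $\mathsf{E}(S)$ and the already-existing join $a \vee b = u$ has to be exploited carefully.
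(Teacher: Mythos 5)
Your part (1) is correct and is essentially the paper's own argument: reduce to domains via Lemma~\ref{lem:buffs} and check in the Boolean algebra that $\mathbf{d}(a\setminus(a\wedge b))\,\mathbf{d}(b)=0$.

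In part (2) you take a genuinely different, much more computational route, and it contains a real gap --- one you flag yourself. Two points. First, the blanket claim that multiplication in an inverse semigroup distributes over any existing binary compatible join is \emph{false} in general (a non-distributive meet-semilattice, viewed as an inverse semigroup, already gives a counterexample); it is true here only because $\mathsf{E}(S)$ is assumed to be a Boolean algebra, hence a distributive lattice, and that hypothesis must be invoked. Granted that, your $(\Leftarrow)$ direction is sound. Second, and more seriously, in the $(\Rightarrow)$ direction you establish $c,b\leq u$ but never verify that $u$ is the \emph{least} upper bound of $\{c,b\}$, and you explicitly defer this step. The missing ingredient is the single identity
$$a=(a\setminus(a\wedge b))\vee(a\wedge b),$$
which holds whenever $\mathsf{E}(S)$ is Boolean: for any upper bound $w$ of $\{c,\,a\wedge b\}$ one has $w\mathbf{d}(a)=w(\mathbf{d}(c)\vee\mathbf{d}(a\wedge b))=w\mathbf{d}(c)\vee w\mathbf{d}(a\wedge b)=c\vee(a\wedge b)$ by the same distributivity, and taking $w=a$ yields the identity. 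With it the least-upper-bound step is immediate: any upper bound $w$ of $\{c,b\}$ dominates $c$ and $a\wedge b$ (since $a\wedge b\leq b$), hence dominates their join $a$, hence dominates $u=a\vee b$; so $u=c\vee b$.

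Once that identity is in hand, the whole of part (2) collapses to the paper's one-line argument: the pairs $\{a,b\}$ and $\{a\setminus(a\wedge b),\,b\}$ have exactly the same set of upper bounds --- one inclusion because $a\setminus(a\wedge b)\leq a$, the other by the displayed identity --- so either join exists if and only if the other does, and they coincide. Your explicit manipulations of $u'\mathbf{d}(a)$, $u\mathbf{d}(c)$ and $u\mathbf{d}(b)$ are correct but become unnecessary at that point.
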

\begin{proof} (1) The element $a \wedge b$ exists by Lemma~\ref{lem:macron}.
Thus $a \setminus (a \wedge b)$ exists by Lemma~\ref{lem:trump}.
Clearly, $\mathbf{d}(a \setminus (a \wedge b)) \mathbf{d}(b) = 0$.
By Lemma~\ref{lem:buffs}, it follows that $(a \setminus (a \wedge b)) \perp b$.

(2) It is enough to prove that $a,b \leq c$ if and only if  $(a \setminus (a \wedge b)), b \leq c$.
Only one direction needs an actual proof.
Suppose that $(a \setminus (a \wedge b)), b \leq c$.
We need to prove that $a \leq c$.
But this follows from the fact that $a = (a \setminus (a \wedge b)) \vee (a \wedge b)$
by Lemma~\ref{lem:trump}.\end{proof}

The following result is frequently invoked in proofs and follows easily from Lemma~\ref{lem:gove}

\begin{proposition}\label{prop:definition} The following are equivalent.
\begin{enumerate}
\item $S$ is a Boolean inverse semigroup.
\item $S$ has all binary orthogonal joins, multiplication distributes over such joins, 
and its semilattice of idempotents forms a Boolean algebra with respect to the natural partial order.
\end{enumerate}
\end{proposition}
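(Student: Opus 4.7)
The plan is to exploit the principle built into Lemma~\ref{lem:gove}: in the presence of a Boolean algebra of idempotents, every compatible pair $(a,b)$ can be traded for an orthogonal pair with the same join. The direction $(1) \Rightarrow (2)$ is essentially immediate, since $s \perp t$ forces $s^{-1}t = 0 = st^{-1}$ and $0$ is idempotent, so every orthogonal pair is compatible; binary orthogonal joins and distributivity of multiplication over them are then inherited as special cases of the compatible versions.

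For $(2) \Rightarrow (1)$ I would first produce all binary compatible joins. Given $a \sim b$, Lemma~\ref{lem:macron} yields $a \wedge b$, the Boolean algebra $\mathsf{E}(S)$ allows the formation of $a \setminus (a \wedge b)$, and Lemma~\ref{lem:gove}(1) certifies that this is orthogonal to $b$. Hypothesis (2) then produces the orthogonal join $(a \setminus (a \wedge b)) \vee b$, and Lemma~\ref{lem:gove}(2) promotes it to the compatible join $a \vee b$.

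The remaining task is to verify distributivity of multiplication over compatible joins. Fix $a \sim b$ and $c \in S$. Using $a \vee b = (a \setminus (a \wedge b)) \vee b$ as an orthogonal join together with hypothesis (2),
$$c(a \vee b) = c(a \setminus (a \wedge b)) \vee cb,$$
with the right-hand side orthogonal by Lemma~\ref{lem:jersey}. Separately, the orthogonal decomposition $a = (a \setminus (a \wedge b)) \vee (a \wedge b)$ combined with hypothesis (2) yields $ca = c(a \setminus (a \wedge b)) \vee c(a \wedge b)$, while $c(a \wedge b) \leq cb$ because $a \wedge b \leq b$. Hence $ca, cb \leq c(a \vee b)$, and any common upper bound of $\{ca, cb\}$ bounds each of $c(a \setminus (a \wedge b)) \leq ca$ and $cb$, and so bounds their join. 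This shows $ca \vee cb$ exists and equals $c(a \vee b)$; the right-multiplication analogue is entirely symmetric. The only real bookkeeping is checking that every join fed to hypothesis (2) is genuinely orthogonal, which Lemmas~\ref{lem:jersey} and \ref{lem:buffs} handle routinely, so no substantial obstacle remains.
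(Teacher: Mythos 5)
Your proposal is correct and follows exactly the route the paper intends: the paper offers no written proof beyond the remark that the proposition ``follows easily from Lemma~\ref{lem:gove}'', and your argument is precisely the expansion of that remark, trading the compatible pair $(a,b)$ for the orthogonal pair $(a\setminus(a\wedge b), b)$ and using Lemma~\ref{lem:jersey} to push orthogonality through multiplication. The only point worth making explicit is the verification that the orthogonal join $(a\setminus(a\wedge b))\vee(a\wedge b)$ really equals $a$ under hypothesis (2) (compare domains using distributivity), but this is the same level of detail at which the paper itself operates in the proof of Lemma~\ref{lem:gove}.
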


A useful first step in the proof of Theorem~\ref{them:bc} is the observation that we may restrict to the case where the inverse semigroup $S$ is actually a distributive inverse semigroup.
This follows from the fact that the forgetful functor from the category of distributive inverse semigroups to
the category of inverse semigroups has a left adjoint which is easy to construct.
We recall this construction here.
Let $S$ be an inverse semigroup.
The {\em Schein completion} of $S$  \cite[Theorem 1.4.23, Theorem 1.4.24]{Law1} is the {\em pseudogroup} $\mathsf{C}(S)$ whose elements are the compatible order ideals of $S$.
Multiplication is subset multiplication; the order is subset inclusion; the idempotents are order ideals of $\mathsf{E}(S)$.
In fact, $\mathsf{C}$ is left adjoint to the forgetful functor from the category of pseudogroups to the category of inverse semigroups.
We now describe the `finite' elements of $\mathsf{C}(S)$.
Let $T$ be a pseudogroup.
An element $a \in T$ is said to be {\em finite} if $a \leq \bigvee_{i \in I} b_{i}$ implies that there is a finite subset $\{1, \ldots, n\} \subseteq I$
such that  $a \leq \bigvee_{i =1}^{n} b_{i}$.
The set of finite elements of $T$ is denoted by $\mathsf{K}(T)$.
The following is a slightly sharper statement of \cite[Lemma~3.3]{LL}.

\begin{lemma} Let $S$ be a pseudogroup.
\begin{enumerate}
\item $a$ is finite if and only if $a^{-1}$ is finite.
\item If $a$ is any element and $e$ is a finite idempotent $e \leq a^{-1}a$ (respectively, $e \leq aa^{-1}$) then $ae$ (respectively, $ea$) is finite.
\item $a$ is finite if and only if $a^{-1}a$ is finite (respectively, $aa^{-1}$ is finite).
\item If $a$ and $b$ are finite and $\mathbf{d}(a) = \mathbf{r}(b)$ then $ab$ is finite.
\end{enumerate}
\end{lemma}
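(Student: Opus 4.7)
The plan is to exploit three features of a pseudogroup in order to shuttle finiteness conditions through multiplication and inversion. First, multiplication by any fixed element distributes over compatible joins. Second, inversion distributes over compatible joins and preserves the natural partial order. Third, the identities $a^{-1}(ae) = e$ whenever $e \leq a^{-1}a$, and $aa^{-1}c \leq c$ (since $aa^{-1}$ is an idempotent), allow us to ``pull back'' covers along $a$.

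Part (1) then follows immediately: an inequality $a^{-1} \leq \bigvee_{i \in I} b_{i}$ inverts to $a \leq \bigvee_{i \in I} b_{i}^{-1}$, and a finite subcover of the latter produced by finiteness of $a$ inverts back to a finite subcover of the former. For (2), from $ae \leq \bigvee_{i \in I} b_{i}$ left-multiplication by $a^{-1}$ together with distributivity yields $e = a^{-1}ae \leq \bigvee_{i \in I} a^{-1}b_{i}$; finiteness of $e$ then extracts a finite subset $\{1,\dots,n\}$ with $e \leq \bigvee_{i=1}^{n} a^{-1}b_{i}$, and left-multiplication by $a$ combined with $aa^{-1}b_{i} \leq b_{i}$ delivers $ae \leq \bigvee_{i=1}^{n} b_{i}$. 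The $ea$ statement is symmetric, or follows from (1).

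For (3), the implication ``$\mathbf{d}(a)$ finite $\Rightarrow a$ finite'' is (2) applied with $e = \mathbf{d}(a)$, since $a = a \mathbf{d}(a)$. For the converse, from $a^{-1}a \leq \bigvee_{i} b_{i}$ left-multiplication by $a$ gives $a \leq \bigvee_{i} ab_{i}$; finiteness of $a$ picks out a finite $\{1,\dots,n\}$, and pulling back by $a^{-1}$ together with $a^{-1}ab_{i} \leq b_{i}$ yields $a^{-1}a \leq \bigvee_{i=1}^{n} b_{i}$. The $aa^{-1}$ case is reduced to this via (1). Finally, (4) is an immediate consequence of (3): the compatibility $\mathbf{d}(a) = \mathbf{r}(b)$ gives $\mathbf{d}(ab) = b^{-1}a^{-1}ab = b^{-1}\mathbf{r}(b)b = \mathbf{d}(b)$, so finiteness of $b$ yields finiteness of $\mathbf{d}(b) = \mathbf{d}(ab)$, whence $ab$ is finite by (3).

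The only point that requires a brief verification is that each auxiliary family such as $\{a^{-1}b_{i}\}$, $\{ab_{i}\}$ or $\{b_{i}^{-1}\}$ is again compatible, so that its join exists in the pseudogroup; this is standard, following from the fact that the compatibility relation is preserved by multiplication by a fixed element on either side and by inversion. Beyond this, no real obstacle is expected: the lemma is purely a bookkeeping exercise on the interaction between joins and the natural partial order, and the $a \mapsto a^{-1}$ and $a \mapsto \mathbf{d}(a)$ translations do all of the work.
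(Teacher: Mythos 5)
Your proof is correct. The paper itself gives no proof of this lemma (it is quoted from \cite[Lemma~3.3]{LL}), but your argument --- pushing a cover forward along left multiplication by $a^{-1}$ or $a$, extracting a finite subcover from the finiteness hypothesis, and pulling back using $a^{-1}ae = e$ and $aa^{-1}b_i \leq b_i$, with (4) reduced to (3) via $\mathbf{d}(ab)=\mathbf{d}(b)$ --- is the standard one and is complete, including the needed remark that the auxiliary families remain compatible so their joins exist in the pseudogroup.
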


The following is \cite[Lemma~3.4]{LL}.

\begin{lemma}\label{lem:cathy} Let $S$ be a pseudogroup.
\begin{enumerate}
\item The finite elements of $S$ form an inverse subsemigroup if and only if the finite idempotents form a
subsemilattice.
\item If the finite elements form an inverse subsemigroup they form a distributive inverse subsemigroup.
\end{enumerate}
\end{lemma}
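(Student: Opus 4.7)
\medskip
\noindent\textbf{Proof proposal.}
For part (1), one direction is immediate: if the finite elements form an inverse subsemigroup, then intersecting with $\mathsf{E}(S)$ gives a subsemilattice (a product of two finite idempotents is again an idempotent by commutativity of $\mathsf{E}(S)$, and is finite by hypothesis).

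The interesting direction assumes that the finite idempotents form a subsemilattice, and must show that the finite elements are closed under inversion and multiplication. Closure under inversion is part (1) of the previous lemma, so the plan is to handle products. Given finite $a,b \in S$, I would set $e = \mathbf{d}(a)\mathbf{r}(b)$, which is a product of finite idempotents (since $\mathbf{d}(a)$ and $\mathbf{r}(b)$ are finite by part (3) of the previous lemma) and hence finite by hypothesis. Then $ae$ and $eb$ are finite by part (2) of the previous lemma, since $e \leq \mathbf{d}(a)$ and $e \leq \mathbf{r}(b)$. But $\mathbf{d}(ae) = e = \mathbf{r}(eb)$, so part (4) of the previous lemma gives that $(ae)(eb)$ is finite. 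A short algebraic verification shows $(ae)(eb) = aeb = ab$, completing the argument.

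For part (2), since the ambient structure is a pseudogroup, multiplication already distributes over all compatible joins, so distributivity is inherited by any inverse subsemigroup. It only remains to verify that the subsemigroup of finite elements is closed under binary compatible joins. Thus I would take two compatible finite elements $a,b$, consider the join $a \vee b$ which exists in $S$, and suppose $a \vee b \leq \bigvee_{i \in I} c_{i}$. Then $a \leq \bigvee_{i \in I} c_{i}$ and $b \leq \bigvee_{i \in I} c_{i}$, so finiteness of each yields finite subsets $F_{a}, F_{b} \subseteq I$ dominating $a$ and $b$ respectively; their union is a finite subset of $I$ dominating $a \vee b$. Hence $a \vee b$ is finite.

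The main obstacle is really the product step in (1): the slight subtlety is recognizing that one must first cut $a$ and $b$ down along the meet idempotent $e = \mathbf{d}(a)\mathbf{r}(b)$ in order to apply part (4) of the previous lemma, which requires matching $\mathbf{d}$ and $\mathbf{r}$. Once this reduction is in place, the rest is a straightforward combination of the parts of the previous lemma with the hypothesis on finite idempotents.
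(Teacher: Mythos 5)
Your argument is correct. Note that the paper itself gives no proof of this lemma, citing it as Lemma~3.4 of \cite{LL}; your reduction of the product $ab$ to $(ae)(eb)$ with $e = \mathbf{d}(a)\mathbf{r}(b)$ so that part~(4) of the preceding lemma applies, together with the union-of-finite-subcovers argument for joins in part~(2), is exactly the standard route one would take and fills the gap soundly.
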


Define $\mathsf{D}(S) = \mathsf{K}(\mathsf{C}(S))$.
An element $A$ of $\mathsf{C}(S)$ is said to be {\em finitely generated} if $A = \{a_{1}, \ldots, a_{m}\}^{\downarrow}$ where
$\{a_{1}, \ldots, a_{m}\}$ is a finite compatible subset of $S$.

\begin{lemma}\label{lem:election} Let $S$ be an inverse semigroup.
\begin{enumerate}
\item The finite elements in $\mathsf{C}(S)$ are precisely the finitely generated ones.
\item  $\mathsf{D}(S)$ is a distributive inverse semigroup and there is an embedding $\delta \colon S \rightarrow \mathsf{D}(S)$ given by $s \mapsto s^{\downarrow}$.
\end{enumerate}
\end{lemma}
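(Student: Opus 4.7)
The plan is to establish (1) first in both directions, and then derive (2) as a short consequence of (1), Lemma~\ref{lem:cathy}, and standard properties of the Schein completion. Throughout I will use the key fact, built into the definition of the pseudogroup $\mathsf{C}(S)$, that compatible joins of order ideals are just unions; this is the one conceptual lever the argument rests on.

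For the first direction of (1), I would start by showing that each principal order ideal $s^{\downarrow}$ is a finite element of $\mathsf{C}(S)$. If $s^{\downarrow} \leq \bigvee_{i \in I} B_{i}$, then $s \in \bigcup_{i \in I} B_{i}$, so $s \in B_{j}$ for a single index $j$, and $s^{\downarrow} \subseteq B_{j}$ since $B_{j}$ is an order ideal. Next I would observe that finite compatible joins of finite elements are finite: if $A = A_{1} \vee \cdots \vee A_{m}$ with each $A_{k}$ finite and $A \leq \bigvee_{i \in I} B_{i}$, then each $A_{k}$ is already dominated by some finite subjoin indexed by $J_{k} \subseteq I$, and $J_{1} \cup \cdots \cup J_{m}$ witnesses finiteness for $A$. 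Since $\{a_{1}, \ldots, a_{m}\}^{\downarrow} = a_{1}^{\downarrow} \vee \cdots \vee a_{m}^{\downarrow}$, every finitely generated element is finite. For the reverse direction, any $A \in \mathsf{C}(S)$ satisfies $A = \bigvee_{a \in A} a^{\downarrow}$; if $A$ is finite, a finite subfamily $a_{1}^{\downarrow}, \ldots, a_{m}^{\downarrow}$ already covers $A$, and the opposite inclusion is automatic from $a_{i} \in A$, yielding $A = \{a_{1}, \ldots, a_{m}\}^{\downarrow}$. Compatibility of $\{a_{1},\ldots,a_{m}\}$ is inherited from the compatibility of $A$.

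For (2), I would apply Lemma~\ref{lem:cathy}. By (1), the finite idempotents of $\mathsf{C}(S)$ are precisely the order ideals of the form $\{e_{1}, \ldots, e_{m}\}^{\downarrow}$ with $e_{i} \in \mathsf{E}(S)$; the product of two such is computed (via subset multiplication) as $\{e_{i}f_{j}\}^{\downarrow}$, which is again of the same shape. Hence the finite idempotents form a subsemilattice, and Lemma~\ref{lem:cathy} delivers that $\mathsf{D}(S)$ is a distributive inverse subsemigroup of $\mathsf{C}(S)$. The map $\delta \colon s \mapsto s^{\downarrow}$ is the standard embedding of $S$ into its Schein completion, injective because $s^{\downarrow} = t^{\downarrow}$ forces $s \leq t \leq s$; since each $s^{\downarrow}$ is finite, $\delta$ factors through $\mathsf{D}(S)$.

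The main obstacle is not algebraic but bookkeeping: one has to be careful to confirm, at each appeal to a join in $\mathsf{C}(S)$, that the family in question is genuinely compatible (so that the join exists and equals the union), and to track that the compatibility passes to the relevant finite subfamilies. Everything else reduces to the straightforward observation that $s \in B_{j}$ for some $j$ when $s$ lies in a union, which is exactly what makes principal order ideals finite.
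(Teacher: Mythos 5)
Your argument is correct and follows essentially the same route as the paper: the converse of (1) is obtained exactly as in the text by covering a finite $A$ with $\bigvee_{a\in A}a^{\downarrow}$ and extracting a finite subcover, and (2) is reduced via Lemma~\ref{lem:cathy} to checking that the product of two finite idempotents is finite. You merely spell out the details the paper dismisses as ``immediate'' and ``straightforward'' (principal ideals and their finite compatible joins being finite, and the product formula $\{e_{i}f_{j}\}^{\downarrow}$ for finite idempotents), all of which check out.
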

\begin{proof} (1) It is immediate from the definition that finitely generated elements of $\mathsf{C}(S)$ are finite.
We prove the converse.
Let $A$ be a finite element of $\mathsf{C}(S)$. 
Then $A \leq \bigvee_{s \in A} s^{\downarrow}$.
Thus $A \leq \bigvee_{i=1}^{n} s_{i}^{\downarrow}$ for some finite set of elements $\{s_{i} \colon 1 \leq i \leq n \} \subseteq A$.
Clearly, $A = \bigcup_{i=1}^{n} s_{i}^{\downarrow}$ and so $A$ is finitely generated.

(2) By Lemma~\ref{lem:cathy}, it is enough to prove that the product of two finite idempotents is finite.
This is straightforward.
\end{proof}

\begin{theorem}[Distributive completion]\label{theorem:dc} Let $S$ be an inverse semigroup.
There is a distributive inverse semigroup $\mathsf{D}(S)$ together with an embedding $\delta \colon S \rightarrow \mathsf{D}(S)$
such that if $\theta \colon S \rightarrow T$ is any homomorphism to a distributive inverse semigroup $T$ there is a unique morphism $\gamma \colon \mathsf{D}(S) \rightarrow T$
such that $\theta = \delta \gamma$.
\end{theorem}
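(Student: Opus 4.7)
The plan is to reuse the machinery already built in Lemma~\ref{lem:election}: every element of $\mathsf{D}(S)$ has the form $A = \{a_{1}, \ldots, a_{n}\}^{\downarrow}$ for some finite compatible subset of $S$, so I only need to define $\gamma$ on such elements and check it is forced, well defined, and a morphism.

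First I would observe that any inverse semigroup homomorphism preserves the compatibility relation: if $a \sim b$ in $S$ then $\theta(a) \sim \theta(b)$ in $T$, since $\theta(a)^{-1}\theta(b) = \theta(a^{-1}b)$ and $\theta(a)\theta(b)^{-1} = \theta(ab^{-1})$ are images of idempotents. Consequently, for a finite compatible $\{a_{1}, \ldots, a_{n}\} \subseteq S$, the set $\{\theta(a_{1}), \ldots, \theta(a_{n})\}$ is compatible in $T$, so the join $\bigvee_{i=1}^{n} \theta(a_{i})$ exists in the distributive inverse semigroup $T$. Define
\[
\gamma\bigl(\{a_{1}, \ldots, a_{n}\}^{\downarrow}\bigr) = \bigvee_{i=1}^{n} \theta(a_{i}).
\]
To see this is well defined, suppose $\{a_{1}, \ldots, a_{n}\}^{\downarrow} = \{b_{1}, \ldots, b_{m}\}^{\downarrow}$. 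Then each $a_{i} \leq b_{j}$ for some $j$, so $\theta(a_{i}) \leq \theta(b_{j}) \leq \bigvee_{k} \theta(b_{k})$; taking the join over $i$ and arguing symmetrically gives equality.

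Next I would verify that $\gamma$ is a morphism of distributive inverse semigroups. Multiplication in $\mathsf{C}(S)$ is subset multiplication followed by order-ideal closure, so for $A = \{a_{i}\}^{\downarrow}$ and $B = \{b_{j}\}^{\downarrow}$ one has $AB = \{a_{i}b_{j}\}^{\downarrow}$, and then
\[
\gamma(AB) = \bigvee_{i,j} \theta(a_{i})\theta(b_{j}) = \Bigl(\bigvee_{i}\theta(a_{i})\Bigr)\Bigl(\bigvee_{j}\theta(b_{j})\Bigr) = \gamma(A)\gamma(B),
\]
using distributivity in $T$. For compatible joins in $\mathsf{D}(S)$, note that if $A \sim B$ then $A \vee B$ is just the downward closure of the (still compatible) union of generators, and the formula for $\gamma$ then splits accordingly. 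Preservation of $0$ is immediate (the bottom of $\mathsf{C}(S)$ is $\{0\}^{\downarrow}$), and clearly $\gamma \circ \delta = \theta$ since $\gamma(s^{\downarrow}) = \theta(s)$.

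For uniqueness, any morphism $\gamma' \colon \mathsf{D}(S) \to T$ satisfying $\gamma' \circ \delta = \theta$ must agree with $\gamma$ on the generators $s^{\downarrow}$; since every element of $\mathsf{D}(S)$ is a finite compatible join of such generators and morphisms of distributive inverse semigroups preserve finite compatible joins, $\gamma' = \gamma$. The main obstacle is the well-definedness check for $\gamma$, but this is controlled entirely by the order-theoretic description of finitely generated compatible order ideals, so it reduces to the elementary observation above. The rest of the argument is a routine unpacking of the fact that multiplication and joins in $\mathsf{C}(S)$ are computed on generators.
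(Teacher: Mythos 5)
Your proposal is correct and follows essentially the same route as the paper: the paper defines $\gamma(\{a_{1},\ldots,a_{n}\}^{\downarrow}) = \bigvee_{i=1}^{n}\theta(a_{i})$, checks well-definedness by exactly the argument you give, and dismisses the remaining verifications as routine. You have simply filled in those routine parts (that $\gamma$ is multiplicative via distributivity in $T$, preserves compatible joins and zero, and is unique because every element of $\mathsf{D}(S)$ is a finite compatible join of the generators $s^{\downarrow}$), all of which are correct.
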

\begin{proof} Let $\theta \colon S \rightarrow T$ be a homomorphism to a distributive inverse semigroup $T$.
We prove that there is a unique morphism $\gamma \colon \mathsf{D}(S) \rightarrow T$ such that $\theta = \phi \gamma$.
Define 
$$\gamma ( \{a_{1}, \ldots, a_{n}\}^{\downarrow} ) = \bigvee_{i=1}^{n} \theta (a_{i}).$$ 
Observe that the right hand side above is defined because $\{a_{1}, \ldots, a_{n} \}$ is a compatible subset of $S$ and homomorphisms preserve compatibility.
We prove that $\gamma$ is well-defined.
Suppose that  $\{a_{1}, \ldots, a_{n}\}^{\downarrow} =  \{b_{1}, \ldots, b_{m}\}^{\downarrow}$.
Then for each $1 \leq i \leq n$, we can write $a_{i} = b_{i}'$ where $b_{i}' \leq b_{j}$ for some $j$.
It follows that $\bigvee_{i=1}^{n} \theta (a_{i}) \leq \bigvee_{j=1}^{m} \theta (b_{j})$. 
By symmetry, we get equality and so $\gamma$ is well-defined.
The proofs of the remaining parts of the theorem are now routine.
\end{proof}

\begin{remark}\label{rem:berk} {\em Let $S$ be an inverse semigroup and let $\theta \colon S \rightarrow T$ 
be a homomorphism to a Boolean inverse semigroup $T$.
Then by Theorem~\ref{theorem:dc}, there is a unique morphism $\theta' \colon \mathsf{D}(S) \rightarrow T$
such that $\theta' \delta = \theta$.
Thus we can construct our Booleanization first for distributive inverse semigroups.}
\end{remark}

\begin{remark}\label{rem:monoid}{\em Suppose that $S$ is an inverse monoid.
Then $1^{\downarrow}$ is a finitely generated compatible order ideal.
It follows that $\mathsf{D}(S)$ is a monoid and that $\delta$ is also a monoid homomorphism.}
\end{remark}

\begin{example}\label{ex:guppy}
{\em It is tempting to believe that if $S$ were already a distributive inverse semigroup then it should be isomorphic to $\mathsf{D}(S)$.
We construct a simple counterexample to show that this is not true.
Let $S = I_{2}$, the symmetric inverse monoid on a set with 2 elements.
This is a (finite) distributive inverse monoid.
Let $A$ be the compatible order ideal of all elements below the transposition $1 \leftrightarrow 2$.
This contains four elements.
Let $B$ be the compatible order ideal generated by the partial bijections $1 \mapsto 2$ and $2 \mapsto 1$.
This contains 3 elements.
Evidently, $A,B \in \mathsf{D}(I_{2})$ and $\bigvee A = \bigvee B$ but $A \neq B$.
It follows that the map $\mathsf{D}(I_{2}) \rightarrow I_{2}$ given by $X \mapsto \bigvee X$ is surjective but not injective.
Thus, in particular, $I_{2}$ is not isomorphic to $\mathsf{D}(I_{2})$.}
\end{example}

An inverse semigroup is said to be a {\em weak semilattice} \cite{Steinberg} if the intersection of any two principal order ideals is finitely generated.

\begin{lemma}\label{lem:lenin} Let $S$ be an inverse semigroup.
Then $S$ is a weak semilattice if and only if $\mathsf{D}(S)$ is a $\wedge$-semigroup.
\end{lemma}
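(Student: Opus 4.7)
The plan is to recognise that meets in $\mathsf{C}(S)$ can be computed set-theoretically, and then to reduce the lemma to a statement about finitely generated compatible order ideals. Since $\mathsf{C}(S)$ is ordered by subset inclusion, any two compatible order ideals $A,B$ of $S$ satisfy $A \cap B \subseteq A$ and hence $A \cap B$ is itself a compatible order ideal of $S$; a trivial check shows that this is the greatest lower bound in $\mathsf{C}(S)$. Thus $\mathsf{C}(S)$ is automatically a $\wedge$-semigroup with $A \wedge B = A \cap B$. Because $\mathsf{D}(S)$ sits inside $\mathsf{C}(S)$ with the inherited order, any meet existing in $\mathsf{D}(S)$ must coincide with the meet in $\mathsf{C}(S)$. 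Consequently, to prove that $\mathsf{D}(S)$ is a $\wedge$-semigroup it suffices to show that $A \cap B$ is finitely generated whenever $A$ and $B$ are.

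For the direction $(\Rightarrow)$, I would take $A = \{a_{1}, \ldots, a_{n}\}^{\downarrow}$ and $B = \{b_{1}, \ldots, b_{m}\}^{\downarrow}$ in $\mathsf{D}(S)$ and write
$$A \cap B = \bigcup_{i,j} \bigl( a_{i}^{\downarrow} \cap b_{j}^{\downarrow} \bigr).$$
By the weak semilattice hypothesis, each $a_{i}^{\downarrow} \cap b_{j}^{\downarrow}$ is finitely generated. A finite union of finitely generated order ideals is finitely generated (concatenate the generating sets; compatibility of the combined set is automatic since it lies inside the compatible set $A$), so $A \cap B \in \mathsf{D}(S)$, as required.

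For the direction $(\Leftarrow)$, given $a,b \in S$, the principal order ideals $a^{\downarrow}, b^{\downarrow}$ are in $\mathsf{D}(S)$ by Lemma~\ref{lem:election}. Assuming $\mathsf{D}(S)$ is a $\wedge$-semigroup, $a^{\downarrow} \wedge b^{\downarrow}$ exists in $\mathsf{D}(S)$; by the first paragraph, this meet is forced to equal the set intersection $a^{\downarrow} \cap b^{\downarrow}$, which is therefore an element of $\mathsf{D}(S)$, i.e.\ finitely generated. Hence $S$ is a weak semilattice.

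The only delicate point is the identification of the meet in $\mathsf{D}(S)$ with set-theoretic intersection; once that is dispatched by the order-theoretic observation above, both directions reduce to bookkeeping with finite generating sets. I expect this order-theoretic identification to be the only step requiring any care, and it is essentially immediate.
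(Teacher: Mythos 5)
Your forward direction is correct and is essentially the paper's argument: write $A \cap B = \bigcup_{i,j}\left( a_{i}^{\downarrow} \cap b_{j}^{\downarrow}\right)$, apply the weak semilattice hypothesis to each piece, and note that the resulting finitely generated compatible order ideal, being the greatest lower bound in $\mathsf{C}(S)$ and lying in $\mathsf{D}(S)$, is automatically the meet in $\mathsf{D}(S)$. That half needs no further comment.

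The gap is in the step you yourself flag as delicate, namely the assertion that ``any meet existing in $\mathsf{D}(S)$ must coincide with the meet in $\mathsf{C}(S)$'' simply because $\mathsf{D}(S)$ carries the inherited order. That is false for subposets in general: inside the powerset of $\{1,2,3\}$, the subposet $\{\varnothing, \{1,2\}, \{2,3\}\}$ has $\varnothing$ as the meet of $\{1,2\}$ and $\{2,3\}$, whereas the ambient meet is $\{2\}$. A meet computed in a subposet can be strictly smaller than the ambient meet whenever the ambient meet fails to lie in the subposet --- which is precisely the situation you cannot exclude in the direction $(\Leftarrow)$, since the whole point there is that you do not yet know $a^{\downarrow} \cap b^{\downarrow}$ is finitely generated. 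What rescues the argument is a feature specific to $\mathsf{D}(S)$: it contains every principal ideal $c^{\downarrow}$. If $M$ is the meet of $a^{\downarrow}$ and $b^{\downarrow}$ in $\mathsf{D}(S)$, then $M \subseteq a^{\downarrow} \cap b^{\downarrow}$ because $M$ is a lower bound; conversely, for each $c \leq a,b$ the element $c^{\downarrow} \in \mathsf{D}(S)$ is a lower bound of $a^{\downarrow}$ and $b^{\downarrow}$, so $c^{\downarrow} \subseteq M$ and hence $c \in M$. This is exactly how the paper closes the converse direction; you should replace your appeal to the inherited order by this argument, after which your proof agrees with the paper's.
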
 
\begin{proof} The natural partial order in $\mathsf{D}(S)$ is subset inclusion.
Suppose first that $S$ is a weak semilattice.
Let $A,B \in \mathsf{D}(S)$ where $A = \{a_{1}, \ldots, a_{m} \}^{\downarrow}$ and $B = \{b_{1}, \ldots, b_{n} \}^{\downarrow}$.
For each pair $(i,j)$, where $1 \leq i \leq m$ and $1 \leq j \leq n$,
choose a finite set of generators $C_{i,j}$ of the order ideal $a_{i}^{\downarrow} \cap b_{j}^{\downarrow}$.
Then it is easy to see that $A \cap B = (\bigcup_{(i,j)} C_{i,j})^{\downarrow}$.
It follows that $\mathsf{D}(S)$ is a $\wedge$-semigroup.
Conversely, suppose that $\mathsf{D}(S)$ is a $\wedge$-semigroup.
Let $a,b \in S$.
Then $a^{\downarrow}, b^{\downarrow} \in \mathsf{D}(S)$.
By assumption, $a^{\downarrow} \wedge b^{\downarrow}$ exists.
Thus we can write $a^{\downarrow} \wedge b^{\downarrow} = \{c_{1}, \ldots, c_{m} \}^{\downarrow} \in \mathsf{D}(S)$.
It follows that $\{c_{1}, \ldots, c_{m} \}^{\downarrow} \subseteq a^{\downarrow} \cap b^{\downarrow}$.
Let $c \leq a,b$.
Then $c^{\downarrow} \subseteq a^{\downarrow},b^{\downarrow}$.
Thus $c^{\downarrow} \subseteq \{c_{1}, \ldots, c_{m} \}^{\downarrow}$ and so $c \in \{c_{1}, \ldots, c_{m} \}^{\downarrow}$.  
It follows that $a^{\downarrow} \cap b^{\downarrow} = \{c_{1}, \ldots, c_{m} \}^{\downarrow}$,
and therefore $S$ is a weak semilattice. 
\end{proof}

Let $S$ be an inverse semigroup.
A {\em filter} in $S$ is a subset $A$ such that $A = A^{\uparrow}$ and whenever $a,b \in A$ there exists $c \in A$ such that $c \leq a,b$.
A filter is {\em proper} if it does not contain zero.
In what follows, any results stated about filters are proved in \cite{Law3, Law5, LL}.
Observe that $A$ is a filter if and only if $A^{-1}$ is a filter.
If $A$ and $B$ are filters then $(AB)^{\uparrow}$ is a filter.
Define $\mathbf{d}(A) = (A^{-1}A)^{\uparrow}$ and $\mathbf{r}(A) = (AA^{-1})^{\uparrow}$.
Then both $\mathbf{d}(A)$ and $\mathbf{r}(A)$ are filters.
It is easy to check that $A$ is proper if and only if $\mathbf{d}(A)$ is proper (respectively, $\mathbf{r}(A)$ is proper).
Observe that for each $a \in A$ we have that $A = (a \mathbf{d}(A))^{\uparrow} = (\mathbf{r}(A)a)^{\uparrow}$.
We denote the set of proper filters on $S$ by $\mathcal{L}(S)$.\footnote{Observe that this notation refers only to proper filters.}
If $A,B \in \mathcal{L}(S)$, define the partial operation $A \cdot B$ if and only if $\mathbf{d}(A) = \mathbf{r}(B)$
in which case $A \cdot B = (AB)^{\uparrow}$.
In this way, $\mathcal{L}(S)$ becomes a groupoid in which
the identities are the filters that contain idempotents; indeed, these are precisely the filters that are also inverse subsemigroups. 

Let $S$ be a distributive inverse semigroup.
A {\em prime filter} in $S$ is a proper filter $A \subseteq S$ such that if $a \vee b \in A$
then $a \in A$ or $b \in A$.
Denote the set of all prime filters of $S$ by $\mathsf{G}(S)$.
It can be checked that $A$ is a prime filter if and only if  $\mathbf{d}(A)$ (respectively, $\mathbf{r}(A)$) is a prime filter.
Define a partial multiplication $\cdot$ on $\mathsf{G}(S)$ by $A \cdot B$ exists if and only if $\mathbf{d}(A) = \mathbf{r}(B)$,
in which case $A \cdot B = (AB)^{\uparrow}$.
With respect to this partial multiplication,  $\mathsf{G}(S)$ is a groupoid where
the identities are the prime filters that contain idempotents.
For this reason, it is convenient to define a prime filter to be an {\em identity} if it contains an idempotent.
Proofs of all of the above claims can be found in \cite{LL}.

Let $G$ be any discrete groupoid with set of identities $G_{o}$.
A subset $X \subseteq G$ is said to be a {\em partial bisection} if $x,y \in X$ and $\mathbf{d}(x) = \mathbf{d}(y)$ then $x = y$,
and if $x,y \in X$ and $\mathbf{r}(x) = \mathbf{r}(y)$ then $x = y$.
It is easy to check that a subset $X \subseteq G$ is a partial bisection precisely when $X^{-1}X,XX^{-1} \subseteq G_{o}$.
A partial bisection is called simply a {\em bisection} if in fact 
$X^{-1}X = G_{o} = XX^{-1}$.\footnote{Partial bisections appear under a wondrous multitude of aliases.
I have settled on this terminology because then partial bisections correspond to partial bijections and bisections correspond to bijections.
The word `local' is a loaded one in topology and has the wrong connotations.}
Denote the set of all partial bisections of $G$ by $\mathsf{L}(G)$.
Endow it with the binary operation of subset multiplication.
The following is well-known \cite[page 12]{Resende}.

\begin{proposition}\label{prop:lb} Let $G$ be a discrete groupoid.
Then $\mathsf{L}(G)$ is a pseudogroup in which the natural partial order is subset inclusion.
\end{proposition}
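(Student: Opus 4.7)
The strategy is to verify each clause of the pseudogroup axioms by systematically exploiting the equivalent characterisation that $X \in \mathsf{L}(G)$ precisely when both $\mathbf{d}$ and $\mathbf{r}$ are injective on $X$. First I would check that $\mathsf{L}(G)$ is closed under subset multiplication and the involution $X \mapsto X^{-1}$: if $X, Y \in \mathsf{L}(G)$ and two elements $xy, x'y'$ of $XY$ satisfy $\mathbf{d}(xy) = \mathbf{d}(x'y')$, then $\mathbf{d}(y) = \mathbf{d}(y')$ forces $y = y'$, after which $\mathbf{d}(x) = \mathbf{r}(y) = \mathbf{r}(y') = \mathbf{d}(x')$ forces $x = x'$; a symmetric argument handles $\mathbf{r}$. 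Associativity of subset multiplication is inherited from $G$ and the empty set is a two-sided absorbing zero.

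For the inverse semigroup axioms, the key identity $X = X X^{-1} X$ follows because $x \cdot x^{-1} \cdot x = x$ for each $x \in X$, while any element $x_1 x_2^{-1} x_3 \in X X^{-1} X$ forces $x_1 = x_2$ (by $\mathbf{d}$-injectivity) and then $x_2 = x_3$ (by $\mathbf{r}$-injectivity), so the product collapses to an element of $X$. The idempotents are exactly the subsets of $G_o$: any such subset is evidently idempotent, and conversely $E = E \cdot E \subseteq E^{-1} E \subseteq G_o$ whenever $E$ is idempotent. To identify the natural partial order $X \leq Y$ (i.e.\ $X = Y \cdot X^{-1} X$) with subset inclusion, note that if $X \subseteq Y$ then $x = x \cdot x^{-1} x \in Y \cdot X^{-1} X$ for each $x \in X$, while the $\mathbf{d}$-injectivity of $Y$ prevents any extraneous elements from appearing in $Y \cdot X^{-1} X$; the converse is immediate, since every element of $Y \cdot X^{-1} X$ is some $y \in Y$ obtained by multiplying by a unit.

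Finally, pairwise compatibility of a family $\{X_i\}_{i \in I}$ in $\mathsf{L}(G)$ unpacks, via the $\mathbf{d}$/$\mathbf{r}$-injectivity criterion, into exactly the statement that whenever $x \in X_i$ and $y \in X_j$ share a $\mathbf{d}$-value or an $\mathbf{r}$-value they coincide --- precisely the condition for $\bigcup_{i \in I} X_i$ to remain a partial bisection. This union is manifestly the join with respect to subset inclusion, and the distributivity law $X \cdot \bigl( \bigcup_i Y_i \bigr) = \bigcup_i (X \cdot Y_i)$ is merely the elementary distributivity of subset multiplication over set-theoretic unions. No single step is genuinely deep; the main bookkeeping task is to keep the various equivalent formulations of the partial bisection property straight and to translate algebraic compatibility ($X_i^{-1} X_j, X_i X_j^{-1} \subseteq G_o$) correctly into set-theoretic coherence of the union.
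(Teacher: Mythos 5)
Your proof is correct and is essentially the standard verification; the paper gives no proof of Proposition~\ref{prop:lb} at all, citing it as well known from Resende's notes, and the argument there is exactly this kind of element-wise checking of the pseudogroup axioms against the $\mathbf{d}$/$\mathbf{r}$-injectivity characterisation of partial bisections. The one slip is the chain $E = E \cdot E \subseteq E^{-1}E$, which presupposes $E \subseteq E^{-1}$ before you have established that $\mathsf{L}(G)$ is inverse; instead write $e \in E = E^{2}$ as $e = e_{1}e_{2}$ with $e_{1}, e_{2} \in E$, deduce $e = e_{1} = e_{2}$ from $\mathbf{d}$- and $\mathbf{r}$-injectivity, and conclude $e = e^{2} \in G_{o}$.
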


Let $S$ be a distributive inverse semigroup.
For each $a \in S$ define $V_{a}$ to be the set of all prime filters that contains $a$.
The following is proved in \cite{LL}.

\begin{proposition}\label{prop:key} Let $S$ be a distributive inverse semigroup.
\begin{enumerate}
\item If $a \nleq b$ then there is a prime filter that contains $a$ and omits $b$.
\item $V_{a} \subseteq V_{b}$ if and only if $a \leq b$.
\item $V_{a}$ consists entirely of identities if and only if $a$ is an idempotent.
\item The sets $V_{a}$ form a basis for a topology with respect to which they are compact-open partial bisections.
In this way,  $\mathsf{G}(S)$ becomes an \'etale groupoid who space of identities is a {\em locally compact} spectral space
---
this means that it is sober and has a basis of compact-open sets closed under binary intersections.
\item If $a \sim b$ then $V_{a} \cup V_{b} = V_{a \vee b}$.
\end{enumerate}
\end{proposition}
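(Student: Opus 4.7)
The plan is to reduce part (1) to the classical prime filter theorem for the distributive lattice $\mathsf{E}(S)$, and then to derive the remaining parts from it together with the correspondence $F \leftrightarrow \mathbf{d}(F)$ for prime filters. The structural fact underlying everything is that a filter $F$ of $S$ with $a \in F$ is determined by $a$ together with the filter $\mathbf{d}(F)$ of $\mathsf{E}(S)$ via $F = (a\mathbf{d}(F))^{\uparrow}$, and primality transports both ways through this correspondence by the distributive law. For (1), I use the elementary inverse-semigroup identity $a \leq b \Leftrightarrow a^{-1}a \leq b^{-1}a$, which turns the hypothesis $a \nleq b$ into $a^{-1}a \notin J$, where $J := \{p \in \mathsf{E}(S) : p \leq b^{-1}a\}$ is an order ideal of $\mathsf{E}(S)$. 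The prime filter theorem for distributive lattices yields a prime filter $P$ of $\mathsf{E}(S)$ with $a^{-1}a \in P$ and $P \cap J = \emptyset$; set $F := (aP)^{\uparrow}$. It is direct that $F$ is a proper filter containing $a$, and if $b \in F$ then $b \geq ap$ for some $p \in P$, so taking domains yields $p \leq b^{-1}a$, contradicting $P \cap J = \emptyset$. Primality of $F$ follows from primality of $P$: given a compatible join $u \vee v \in F$, choose $p \in P$ with $ap \leq u \vee v$; distributivity gives $ap = (u \vee v) p = up \vee vp$, whose domain computation yields $p = pu^{-1}u \vee pv^{-1}v$ in $\mathsf{E}(S)$, so primality of $P$ forces one of these idempotents into $P$ and the corresponding one of $u,v$ into $F$. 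Part (2) then follows: the ``if'' direction is upward closure and ``only if'' is the contrapositive of (1).

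For part (3), the $\Rightarrow$ direction is immediate since any filter containing an idempotent is an identity. For $\Leftarrow$, suppose every $F \in V_a$ is an identity; pick an idempotent $e \in F$, use filter directedness to find $g \in F$ with $g \leq a, e$, observe that $g \leq e$ forces $g$ idempotent, and that any idempotent below $a$ lies below $a^{-1}a$ (since $g = g^{-1} \leq a^{-1}$ gives $g = g^2 \leq a^{-1}a$). Hence $a^{-1}a \in F$, so $V_a \subseteq V_{a^{-1}a}$, giving $a \leq a^{-1}a$ by (2); being below an idempotent forces $a$ itself to be idempotent. Part (5) is the two-line observation that $V_a \cup V_b \subseteq V_{a \vee b}$ by upward closure and that the reverse inclusion is precisely the primality of the filters involved.

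For (4), the basis property $V_a \cap V_b = \bigcup_{c \leq a,\, c \leq b} V_c$ follows from filter directedness, and each $V_a$ is a partial bisection because $F = (a\mathbf{d}(F))^{\uparrow}$ forces two prime filters in $V_a$ with the same $\mathbf{d}$ to coincide (symmetrically for $\mathbf{r}$). The maps $\mathbf{d} \colon V_a \to V_{a^{-1}a}$ and $\mathbf{r} \colon V_a \to V_{aa^{-1}}$ are continuous bijections with continuous inverses $P \mapsto (aP)^{\uparrow}$; this delivers the \'etale structure on $\mathsf{G}(S)$ and reduces compactness of $V_a$ to compactness of a basic open $V_e$ in the Stone spectrum of $\mathsf{E}(S)$. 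The latter is the standard consequence of the prime filter theorem: any cover $V_e \subseteq \bigcup_i V_{e_i}$ forces $e$ into the ideal generated by the $e_i$, yielding a finite subcover. Finally, the space of identities is precisely the Stone spectrum of $\mathsf{E}(S)$, hence sober, and the identity $V_e \cap V_f = V_{ef}$ on idempotents exhibits the required basis of compact-opens closed under binary intersection. The main technical obstacle throughout is the primality transfer in the correspondence $F \leftrightarrow \mathbf{d}(F)$, which is the workhorse of the entire argument and requires the careful distributive-law unwinding of compatible joins sketched above.
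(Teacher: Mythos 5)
The paper does not prove this proposition itself---it is quoted from \cite{LL}---but your argument is correct and follows essentially the same route taken there: part (1) is reduced to the prime ideal theorem for the distributive lattice $\mathsf{E}(S)$ via the equivalence $a \leq b \Leftrightarrow a^{-1}a = b^{-1}a$, and the remaining parts are read off from the correspondence $F = (a\,\mathbf{d}(F))^{\uparrow}$. The only point to tidy is a normalisation in the primality transfer: replace $p$ by $(a^{-1}a)p$, which still lies in $P$, so that $\mathbf{d}(ap) = p$; your domain computations (e.g.\ ``$p \leq b^{-1}a$'' and ``$p = pu^{-1}u \vee pv^{-1}v$'') are then literally correct.
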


The philosophy that underlies this paper can be usefully summarized by the following table:

\begin{center}
\begin{tabular}{|c||c|}\hline
{\bf Commutative} & {\bf Non-commutative} \\ \hline \hline
{\small Meet semilattice} & {\small Inverse semigroup} \\ \hline
{\small Frame} & {\small Pseudogroup} \\ \hline
{\small Distributive lattice} & {\small Distributive inverse semigroup} \\ \hline
{\small Boolean algebra} & {\small Boolean inverse semigroup} \\ \hline
\end{tabular}
\end{center}

\section{Proof of Theorem~\ref{them:bc}}

This splits into two parts.
First,  we construct the Boolean inverse semigroup $\mathsf{B}(S)$ and the map $\beta \colon S \rightarrow \mathsf{B}(S)$;
this reiterates what we showed in \cite{LL}.
Second, which is the new part,  we prove that $\beta$ has the requisite universal properties.

If $S$ is an inverse semigroup then we can replace it by $\mathsf{D}(S)$ when calculating its Booleanization by Remark~\ref{rem:berk}. 
Thus in what follows we shall assume that $S$ is distributive.
We present a direct construction of the Booleanization of $S$ in the case where $S$ is simply an inverse semigroup with zero in Section~2.4.

We introduce some important notation.
Let $S$ be a distributive inverse semigroup.
If $b \leq a$ where $a,b \in S$  define
$V_{a;b} = V_{a} \setminus V_{b}$.
Since this is a subset of a partial bisection it is itself a partial bisection.
Observe that $V_{a;b} \neq \varnothing$ if and only if $b < a$ by part (1) of Proposition~\ref{prop:key}.
If $e \in \mathsf{E}(S)$ then we denote by $\mathcal{V}_{e}$ the set of prime filters in $\mathsf{E}(S)$ containing $e$.
If $f \leq e$  define $\mathcal{V}_{e;f} = \mathcal{V}_{e} \setminus \mathcal{V}_{f}$.\\

\noindent
{\bf Notation.} Keep clear the typographical distinction between
$V_{a}$, which is a set of prime filters {\em in} $S$, and $\mathcal{V}_{e}$, which is the set of prime filters {\em in} $\mathsf{E}(S)$.\\ 

The properties of the operation $(a,b) \mapsto a \setminus b$ in a Boolean inverse semigroup motivate this paper.
They are summarized below.
Observe that whenever we write $s \setminus t$ we assume that $t \leq s$.

\begin{lemma}\label{lem:properties} Let $S$ be a Boolean inverse semigroup.
\begin{enumerate}
\item $(s \setminus t)^{-1} = s^{-1} \setminus t^{-1}$.
\item If $a$ is any element then $(s \setminus t)a = sa \setminus ta$.
\item $s \setminus (u \vee v) = (s \setminus u)s^{-1}(s \setminus v)$.
\item $(s \setminus t)(u \setminus v) = su \setminus (sv \vee tu)$.
\item $V_{a;b} = V_{c;d}$ if and only if $a \setminus b = c \setminus d$.
\item If $a \leq b \leq c$ then $(c \setminus b) \leq (c \setminus a)$.
\end{enumerate}
\end{lemma}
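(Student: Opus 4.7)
The unifying idea is that each identity can be reduced to the uniqueness characterization of relative complements (Lemma~\ref{lem:trump}) together with the explicit formula $s\setminus t = s(\mathbf{d}(s)\setminus\mathbf{d}(t))$ and the orthogonality lemmas \ref{lem:jersey} and \ref{lem:buffs}. So for (1) and (2) my plan is: verify that the proposed right-hand side sits below the required ambient element, is orthogonal to the piece being removed, and joins with that piece to give the ambient element. For (1), $(s\setminus t)^{-1}\le s^{-1}$ is immediate from monotonicity of inversion, orthogonality transfers under inversion, and inverting the identity $s = t\vee(s\setminus t)$ gives $s^{-1}=t^{-1}\vee(s\setminus t)^{-1}$; uniqueness then identifies $(s\setminus t)^{-1}$ with $s^{-1}\setminus t^{-1}$. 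For (2), the three ingredients are $(s\setminus t)a\le sa$, the orthogonality $(s\setminus t)a\perp ta$ supplied by Lemma~\ref{lem:jersey}, and the distributive identity $sa=ta\vee(s\setminus t)a$ obtained by multiplying $s=t\vee(s\setminus t)$ by $a$ on the right using Proposition~\ref{prop:definition}.

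For (3) I would substitute the formula $s\setminus x=s(\mathbf{d}(s)\setminus\mathbf{d}(x))$, note that $\mathbf{d}(u\vee v)=\mathbf{d}(u)\vee\mathbf{d}(v)$, and then reduce to the purely Boolean identity $e\setminus(f\vee g)=(e\setminus f)(e\setminus g)$ inside the Boolean algebra $\mathsf{E}(S)$. The right-hand side $(s\setminus u)s^{-1}(s\setminus v)$ then unwinds as $s(\mathbf{d}(s)\setminus\mathbf{d}(u))s^{-1}s(\mathbf{d}(s)\setminus\mathbf{d}(v))$, and the factor $s^{-1}s=\mathbf{d}(s)$ in the middle is absorbed because both flanking idempotents are already $\le\mathbf{d}(s)$.

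Part (4) is the one place where the bookkeeping is non-trivial, and I expect it to be the main obstacle. My plan is again Lemma~\ref{lem:trump}, applied with ambient element $su$ and piece $sv\vee tu$ (which exists because both summands lie in the compatible set $(su)^{\downarrow}$). First, $(s\setminus t)(u\setminus v)\le su$ is obvious. For orthogonality with $sv\vee tu$, I would note $(s\setminus t)(u\setminus v)\le s(u\setminus v)$ and apply Lemma~\ref{lem:jersey} to $(u\setminus v)\perp v$ to get $s(u\setminus v)\perp sv$; similarly $(s\setminus t)(u\setminus v)\le(s\setminus t)u\perp tu$. The key computation is then
\begin{equation*}
su=(t\vee(s\setminus t))(v\vee(u\setminus v))=tv\vee t(u\setminus v)\vee(s\setminus t)v\vee(s\setminus t)(u\setminus v),
\end{equation*}
and I would observe that the first three summands are absorbed into $sv\vee tu$ (each being $\le sv$ or $\le tu$), leaving $su=(sv\vee tu)\vee(s\setminus t)(u\setminus v)$. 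Lemma~\ref{lem:trump} then finishes the job.

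For (5) the plan is first to prove the auxiliary equality $V_{a;b}=V_{a\setminus b}$: the inclusion $\subseteq$ uses primeness applied to the join $a=b\vee(a\setminus b)$ in any prime filter containing $a$ but not $b$, and $\supseteq$ uses $a\setminus b\le a$ together with $a\setminus b\perp b$ to forbid $b$ from any prime filter containing $a\setminus b$. Then (5) is just Proposition~\ref{prop:key}(2) applied to $a\setminus b$ and $c\setminus d$. Finally (6) is the one-line observation that $a\le b$ forces $\mathbf{d}(a)\le\mathbf{d}(b)$, which in the Boolean algebra $\mathsf{E}(S)$ gives $\mathbf{d}(c)\setminus\mathbf{d}(b)\le\mathbf{d}(c)\setminus\mathbf{d}(a)$; multiplying on the left by $c$ (order-preserving) yields the required inequality.
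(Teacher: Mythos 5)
Your proposal is correct, and for parts (1), (2), (3), (5) and (6) it follows essentially the same route as the paper (the paper leaves (1) as ``straightforward'' and compresses (3) to the underlying Boolean-algebra identity; your versions simply supply the details, and your verification in (2) via Lemma~\ref{lem:trump} is exactly the paper's argument). The one genuine divergence is part (4). The paper treats (4) as a formal corollary of (3) and (2): it expands $su \setminus (sv \vee tu)$ using the identity of part (3) into a product of two relative complements sandwiching $u^{-1}s^{-1}$, rewrites each factor with part (2), and lets the idempotents $uu^{-1}$ and $s^{-1}s$ commute and be absorbed. You instead go back to first principles and verify the three hypotheses of Lemma~\ref{lem:trump} directly, with the key step being the expansion $su = (t \vee (s\setminus t))(v \vee (u \setminus v))$ and the absorption of the cross terms into $sv \vee tu$; your orthogonality check via Lemma~\ref{lem:jersey} and downward closure of $\perp$ is sound, as is the existence of $sv \vee tu$ via Lemma~\ref{lem:macron}(3). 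The paper's derivation is shorter once (2) and (3) are in hand and makes the dependency structure of the lemma transparent; yours is more self-contained and makes visible exactly why the ``correction term'' $sv \vee tu$ is the right thing to subtract. Both are valid proofs.
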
 
\begin{proof}
(1) Straightforward.

(2) Observe that from $t \leq s$ we obtain $ta \leq sa$.
From $s = t \vee (s \setminus t)$ we get that $sa = ta \vee (s \setminus t)a$
and from $t \perp (s \setminus t)$ we get that $ta \perp (s \setminus t)a$.
It follows by Lemma~\ref{lem:trump} that $(s \setminus t)a = sa \setminus ta$.

(3) This follows from the fact that in a Boolean algebra $e \setminus (i \vee j) = (e \setminus i)(e \setminus j)$.

(4) Observe that $sv,tu \leq su$ and so $sv \sim tu$ meaning that the join $sv \vee tu$ exists in $S$.
We have the following argument, where we use parts (3) and (2).
\begin{eqnarray*}
su \setminus (sv \vee tu) &=& (su \setminus tu) u^{-1}s^{-1} (su \setminus sv)\\
                                         &=& (s \setminus t) uu^{-1} s^{-1}s (u \setminus v)\\
                                         &=& (s \setminus t) s^{-1}s uu^{-1} (u \setminus v)\\
                                         &=& (s \setminus t)(u \setminus v).  
\end{eqnarray*}

(5) We prove that in a Boolean inverse semigroup $V_{a;b} = V_{a \setminus b}$.
The result then follows by part (2) of Proposition~\ref{prop:key}.
Let $A \in V_{a;b}$.
Then $a \in A$ and $b \notin A$.
But $a = b \vee (a \setminus b)$ and $A$ is a prime filter.
Thus $a \setminus b \in A$.
In the other direction, let $A \in V_{a \setminus b}$.
Then $a \in A$ since $a \setminus b \leq a$.
But $A$ is a prime filter and $a = b \vee a \setminus b$
so that $b \in A$ or $a \setminus b \in A$.
However $b \in A$ and $a \setminus b \in A$ implies that $0 = b \wedge (a \setminus b) \in A$, which is impossible.
Thus $a \setminus b \in A$, as required.

(6) This follows from the fact that $\mathbf{d}(a) \leq \mathbf{d}(b) \leq \mathbf{d}(c)$ and that
$(\mathbf{d}(c) \setminus \mathbf{d}(b)) \leq (\mathbf{d}(c) \setminus \mathbf{d}(a))$.
\end{proof}

\subsection{Construction}

Let $S$ be a distributive inverse semigroup.
We begin by constructing the discrete groupoid $\mathsf{G}(S)$ of prime filters on $S$ and then the pseudogroup $\mathsf{L}(\mathsf{G}(S))$ by Proposition~\ref{prop:lb}.
By Proposition~\ref{prop:key} there is an embedding $\iota \colon S \rightarrow \mathsf{L}(\mathsf{G}(S))$ given by $\iota (a) = V_{a}$.
We shall construct $\mathsf{B}(S)$ as an inverse subsemigroup of  $\mathsf{L}(\mathsf{G}(S))$ that contains the image of $\iota$.

\begin{lemma}\label{lem:benn} Let $S$ be a distributive inverse semigroup.
\begin{enumerate}
\item $V_{s;t}^{-1} = V_{s^{-1};t^{-1}}$.
\item $V_{s;t}V_{u;v} = V_{su;sv \vee tu}$.
\item Let $a \sim b$, $c \sim d$ and $c \vee d \leq a \vee b$.  Then $V_{(a \vee b);(c \vee d)} = V_{a;(c \vee d)a^{-1}a} \cup V_{b; (c \vee d)b^{-1}b}$.
\end{enumerate}
\end{lemma}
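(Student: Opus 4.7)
Part (1) is immediate from the identity $V_s^{-1} = V_{s^{-1}}$: the involution $A \mapsto A^{-1}$ bijects prime filters of $S$ and satisfies $s \in A$ if and only if $s^{-1} \in A^{-1}$. Since set-theoretic inversion commutes with set difference, $V_{s;t}^{-1} = V_{s^{-1}} \setminus V_{t^{-1}} = V_{s^{-1};t^{-1}}$.

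For part (2), the decisive observation is that $V_s$ and $V_u$ are partial bisections by Proposition~\ref{prop:key}(4). In any discrete groupoid, if $X$ and $Y$ are partial bisections and $C \in XY$, then the factorization $C = xy$ with $x \in X$, $y \in Y$ (composable) is unique: from $\mathbf{r}(x) = \mathbf{r}(C)$ and the partial-bisection property of $X$ we deduce $x$, and then $y = x^{-1}C$. A direct consequence is the purely set-theoretic distributivity
\[
(X \setminus X')(Y \setminus Y') = XY \setminus (XY' \cup X'Y)
\]
whenever $X' \subseteq X$ and $Y' \subseteq Y$. Applying this with $X = V_s$, $X' = V_t$, $Y = V_u$, $Y' = V_v$, together with $V_a V_b = V_{ab}$ (since $\iota$ is a homomorphism) and $V_{sv} \cup V_{tu} = V_{sv \vee tu}$ (by Proposition~\ref{prop:key}(5), where the join exists because $sv, tu \leq su$ gives $sv \sim tu$ via Lemma~\ref{lem:macron}(3)), yields (2).

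For part (3), expand via Proposition~\ref{prop:key}(5) to obtain
\[
V_{(a \vee b);(c \vee d)} = (V_a \cup V_b) \setminus V_{c \vee d} = (V_a \setminus V_{c \vee d}) \cup (V_b \setminus V_{c \vee d}),
\]
and reduce to showing $V_a \setminus V_{c \vee d} = V_{a;(c \vee d)a^{-1}a}$ (the statement for $b$ being symmetric). First, $(c \vee d) a^{-1}a \leq a$: from $c \vee d \leq a \vee b$ and $b a^{-1}a = a \wedge b$ (Lemma~\ref{lem:macron}(2)), one gets $(c \vee d)a^{-1}a \leq (a \vee b)a^{-1}a = a \vee (a \wedge b) = a$. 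Since $(c \vee d)a^{-1}a \leq c \vee d$, the inclusion $V_a \setminus V_{c \vee d} \subseteq V_a \setminus V_{(c \vee d)a^{-1}a}$ is immediate. For the reverse, if $A \in V_a \cap V_{c \vee d}$, filter directedness supplies $x \in A$ with $x \leq a$ and $x \leq c \vee d$; from $x \leq a$ we have $\mathbf{d}(x) \leq \mathbf{d}(a)$, so $x = x\mathbf{d}(a) \leq (c \vee d)a^{-1}a$, forcing $(c \vee d)a^{-1}a \in A$. The main obstacle is part (2): subset multiplication does not in general distribute over set difference, and the identity used above relies essentially on the partial-bisection hypothesis; once that is recognized, the remainder is bookkeeping.
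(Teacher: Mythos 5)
Your proof is correct. Parts (1) and (3) follow essentially the same route as the paper: (1) is the same observation about inversion of prime filters, and in (3) your decomposition $V_{(a\vee b);(c\vee d)} = (V_a\setminus V_{c\vee d})\cup(V_b\setminus V_{c\vee d})$ together with the identification $V_a\setminus V_{c\vee d}=V_{a;(c\vee d)a^{-1}a}$ is exactly the content of the paper's two-inclusion element chase; the key step in both is that a prime filter containing $a$ and $c\vee d$ must contain $(c\vee d)a^{-1}a$ (which the paper asserts without the directedness argument you supply, so your version is actually slightly more complete, as is your verification that $(c\vee d)a^{-1}a\leq a$ so that the notation is licit). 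The genuine difference is part (2): the paper simply cites \cite[Proposition~5.5]{LL}, whereas you give a self-contained proof via the unique-factorization property of products of partial bisections and the resulting identity $(X\setminus X')(Y\setminus Y')=XY\setminus(XY'\cup X'Y)$ for $X'\subseteq X$, $Y'\subseteq Y$. That identity is correct for the reason you give, and combined with $V_aV_b=V_{ab}$ and Proposition~\ref{prop:key}(5) it yields (2) cleanly; what it buys is a purely groupoid-theoretic argument that makes the mechanism behind the cited result visible, at the cost of a short digression the paper avoids by reference.
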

\begin{proof} (1) The proof follows by two simple observations.
First, $A$ is a prime filter if and only if $A^{-1}$ is a prime filter.
Second, $A$ is a prime filter that contains $s$ and omits $t$ if and only if $A^{-1}$ is a prime filter that contains $s^{-1}$ and omits $t^{-1}$.

(2) This is proved as part (1) of \cite[Proposition 5.5]{LL} though in this paper we have taken the opportunity to make the obvious simplification
in the statement of the result.

(3) Let $P \in V_{(a \vee b);(c \vee d)}$.
Then $a \vee b \in P$ and $c \vee d \notin P$.
Suppose that $a \in P$.
If $(c \vee d)a^{-1}a \in P$ then $c \vee d \in P$ which is a contradiction.
By a similar argument we deduce that 
$V_{(a \vee b);(c \vee d)} \subseteq V_{a;(c \vee d)a^{-1}a} \cup V_{b; (c \vee d)b^{-1}b}$.
Now let $P \in V_{a;(c \vee d)a^{-1}a}$.
Then $a \in P$ and $(c \vee d)a^{-1}a \notin P$.
It follows that $a \vee b \in P$.
Suppose that $c \vee d \in P$.
Then $(c \vee d)a^{-1}a \in P$ which is a contradiction.
It follows that $P \in V_{(a \vee b);(c \vee d)}$.
By a similar argument we have proved the claim.
\end{proof}

By Lemma~\ref{lem:benn}, the set $\mathsf{V}(S) = \{V_{s;t} \colon t < s\}$ forms an inverse subsemigroup of $\mathsf{L}(\mathsf{G}(S))$.
More generally, let $V$ be an inverse subsemigroup of a distributive inverse semigroup $L$.
Define $V^{\vee}$ to be the set of all elements of $L$ which are compatible joins of finite compatible subsets of $V$.
The proof of the following is straightforward.

\begin{lemma}\label{lem:fillon} Let $V$ be an inverse subsemigroup of a distributive inverse semigroup $L$.
Then  $V^{\vee}$ is a distributive inverse semigroup.
\end{lemma}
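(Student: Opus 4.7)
The plan is to verify directly that $V^{\vee}$ is closed in $L$ under (a) inverses, (b) products, and (c) binary compatible joins. Since multiplication in $L$ already distributes over binary compatible joins, distributivity of $V^{\vee}$ is then automatic: given elements of $V^{\vee}$, distribution inside $L$ produces the same formula, and (b)+(c) guarantee that the resulting elements lie in $V^{\vee}$. Note that singleton subsets are trivially compatible, so $V \subseteq V^{\vee}$, and the zero of $V$ gives the zero of $V^{\vee}$.

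For (a), I would use the standard fact (valid in any inverse semigroup in which the relevant joins exist) that $\left(\bigvee_{i=1}^{n} a_{i}\right)^{-1} = \bigvee_{i=1}^{n} a_{i}^{-1}$. This follows because inversion is an order-isomorphism of $(L, \leq)$ onto itself, and if $a_{1}, \ldots, a_{n}$ are compatible then so are their inverses (directly from the definition of $\sim$). Since $V$ is an inverse subsemigroup of $L$, each $a_{i}^{-1} \in V$, so the inverse of a representative join of elements of $V$ is again such a join.

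For (b), let $a = \bigvee_{i=1}^{n} a_{i}$ and $b = \bigvee_{j=1}^{m} b_{j}$ with $\{a_{i}\}$ and $\{b_{j}\}$ compatible subsets of $V$. Distributivity in $L$ gives $ab = \bigvee_{i,j} a_{i} b_{j}$, and each $a_{i} b_{j} \in V$ since $V$ is closed under products. Compatibility of $\{a_{i} b_{j}\}$ follows at once from the observation that $a_{i} b_{j} \leq ab$ for all $i,j$: any two elements lying below a common element are compatible (two elements of $(ab)^{\downarrow}$ are compatible by part~(3) of Lemma~\ref{lem:macron}). For (c), if $a, b \in V^{\vee}$ are compatible with representations as above, then $a \vee b$ exists in $L$, and both $a$ and $b$ lie below it. Hence every $a_{i}$ and every $b_{j}$ lies below $a \vee b$, so the union $\{a_{1}, \ldots, a_{n}\} \cup \{b_{1}, \ldots, b_{m}\}$ is pairwise compatible in $V$; its join in $L$ equals $(\bigvee a_{i}) \vee (\bigvee b_{j}) = a \vee b$, exhibiting $a \vee b$ as a finite compatible join of elements of $V$.

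The only point requiring any care is the compatibility book-keeping in (b) and (c), but in both cases it reduces to the principle that elements sharing a common upper bound are automatically compatible. Once these three closure properties are in hand, the distributive inverse semigroup structure on $V^{\vee}$ is inherited verbatim from $L$.
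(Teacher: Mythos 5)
Your proof is correct, and it is precisely the routine closure verification (under inverses, products, and binary compatible joins, with distributivity inherited from $L$) that the paper declares ``straightforward'' and omits. The compatibility book-keeping via part~(3) of Lemma~\ref{lem:macron} is exactly the right way to handle the only delicate points.
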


In the light of Lemma~\ref{lem:fillon}, define $\mathsf{B}(S)$ to be the set of all unions within $\mathsf{L}(\mathsf{G}(S))$ of finite compatible subsets of $\mathsf{V}(S)$ 
and define $\beta \colon S \rightarrow \mathsf{B}(S)$ by $\beta (s) = V_{s}$.
Then $\mathsf{B}(S)$ is a distributive inverse semigroup and $\beta$ is a morphism of distributive inverse semigroups.

\begin{lemma}\label{lem:dypso} Let $S$ be a distributive inverse semigroup.
\begin{enumerate}

\item Let $b < a$ and $d < c$.
If $V_{a;b} \subseteq V_{c;d}$ then $V_{\mathbf{d}(a);\mathbf{d}(b)} \subseteq V_{\mathbf{d}(c);\mathbf{d}(d)}$. 

\item  Let $f < e$ and $j < i$ be idempotents.
Then $V_{e;f} \subseteq V_{i;j}$ if and only if $\mathcal{V}_{e;f} \subseteq \mathcal{V}_{i;j}$. 

\end{enumerate}
\end{lemma}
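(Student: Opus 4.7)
\emph{Proof plan.} For part~(1), my plan is to translate the hypothesis $V_{a;b}\subseteq V_{c;d}$ down to the idempotent level by lifting an arbitrary prime filter of idempotents through left-multiplication by $a$. Given any $P\in V_{\mathbf{d}(a);\mathbf{d}(b)}$, the presence of the idempotent $\mathbf{d}(a)$ in $P$ makes $P$ an identity prime filter, and I would set $Q=(aP)^{\uparrow}$. A direct calculation yields $\mathbf{d}(Q)=P$, so $Q$ is itself a prime filter of $S$ via the equivalence ``$A$ is a prime filter iff $\mathbf{d}(A)$ is a prime filter'' recalled in the introduction. Clearly $a\in Q$; and if $b\in Q$, then $ap\leq b$ for some $p\in P$ would give $p\,\mathbf{d}(a)=\mathbf{d}(ap)\leq\mathbf{d}(b)$ with $p\,\mathbf{d}(a)\in P$, forcing $\mathbf{d}(b)\in P$ and contradicting the choice of $P$. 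Hence $Q\in V_{a;b}\subseteq V_{c;d}$.

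Now $c\in Q$ at once gives $\mathbf{d}(c)\in \mathbf{d}(Q)=P$. The more delicate containment is $\mathbf{d}(d)\notin P$: suppose otherwise. Since $d\leq c$, the standard identity $d=c\,\mathbf{d}(d)$ holds, and since $c\in Q$ with $\mathbf{d}(d)\in \mathbf{d}(Q)$, the reconstruction formula $A=(a\,\mathbf{d}(A))^{\uparrow}$ for a prime filter $A$ containing $a$ (recorded earlier) yields $d\in Q$, contradicting $d\notin Q$. Thus $P\in V_{\mathbf{d}(c);\mathbf{d}(d)}$, completing~(1).

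For part~(2), the approach is simply to invoke the bijection $P\mapsto P^{\uparrow}$ between prime filters of $\mathsf{E}(S)$ and identity prime filters of $S$, with inverse $Q\mapsto Q\cap\mathsf{E}(S)$. Any prime filter belonging to $V_{e;f}$ or $V_{i;j}$ contains the idempotent $e$ or $i$ and is therefore of identity type, so the bijection restricts to bijections $\mathcal{V}_{e;f}\leftrightarrow V_{e;f}$ and $\mathcal{V}_{i;j}\leftrightarrow V_{i;j}$. Under this dictionary, the two containments $V_{e;f}\subseteq V_{i;j}$ and $\mathcal{V}_{e;f}\subseteq\mathcal{V}_{i;j}$ assert literally the same thing.

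I expect the principal obstacle to be the lifting step in~(1): checking that $(aP)^{\uparrow}$ is genuinely a \emph{prime} filter (handled by reducing its primeness to that of $P=\mathbf{d}(Q)$), and using the reconstruction identity to push the condition on $d$ back to a condition on $\mathbf{d}(d)$. Both tasks are supplied by the filter theory already collected in the preliminaries; part~(2) is then essentially formal once the prime-filter/prime-idempotent-filter correspondence is in hand.
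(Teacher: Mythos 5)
Your proof is correct and takes essentially the same route as the paper: part (1) lifts a prime filter $F\in V_{\mathbf{d}(a);\mathbf{d}(b)}$ to $(aF)^{\uparrow}\in V_{a;b}$ and transports the conclusion back down via $\mathbf{d}$, and part (2) uses the bijection between prime filters of $\mathsf{E}(S)$ and identity prime filters of $S$ --- you merely supply details (the verifications that $b\notin Q$ and $\mathbf{d}(d)\notin P$) that the paper leaves implicit. The only microscopic caveat is that the identity $\mathbf{d}(ap)=p\,\mathbf{d}(a)$ requires $p$ to be an idempotent, which you may arrange by replacing $p$ with an idempotent of $P$ beneath it, since $P$ is an identity filter.
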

\begin{proof} (1)  Let $F \in  V_{\mathbf{d}(a);\mathbf{d}(b)}$.
Then $A = (aF)^{\uparrow} \in V_{a;b}$.
Thus $A \in V_{c;d}$.
But then $F = \mathbf{d}(A) \in  V_{\mathbf{d}(c);\mathbf{d}(d)}$. 

(2) The crux of the proof is that if $A$ is a prime filter that contains an idempotent then $A = \mathsf{E}(A)^{\uparrow}$ where $\mathsf{E}(A)$ is a prime
filter in the distributive lattice of idempotents.
Suppose that $V_{e;f} \subseteq V_{i;j}$.
Let $F \in \mathcal{V}_{e;f}$.
Then $F^{\uparrow} \in V_{e,f}$.
Thus $F^{\uparrow} \in V_{i,j}$.
It follows that $F \in \mathcal{V}_{i;j}$.
Conversely, suppose that $\mathcal{V}_{e;f} \subseteq \mathcal{V}_{i;j}$. 
Let $A \in V_{e;f}$.
Then $\mathsf{E}(A) \in \mathcal{V}_{e;f}$ and so  $\mathsf{E}(A) \in \mathcal{V}_{i;j}$.
Thus $A \in V_{i;j}$.
\end{proof}

\begin{lemma}\label{lem:cake} Let $S$ be a distributive inverse semigroup.
If $V_{a;b}$ is such that every element is an identity prime filter then 
$V_{a;b} = V_{\mathbf{d}(a); \mathbf{d}(b)}$.
\end{lemma}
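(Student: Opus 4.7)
The plan is to establish the two set-theoretic inclusions $V_{a;b} \subseteq V_{\mathbf{d}(a);\mathbf{d}(b)}$ and $V_{\mathbf{d}(a);\mathbf{d}(b)} \subseteq V_{a;b}$ separately, invoking the identity hypothesis once in each direction. The key idea is that on identity prime filters, elements of $S$ behave as if they were their own idempotent parts, and the extra hypothesis forces every lift of the idempotent side of the picture back onto the identity space.

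For the forward inclusion, I would let $A \in V_{a;b}$ and exploit that by hypothesis $A$ contains an idempotent. Combining this idempotent with $a \in A$ via downward directedness of the filter yields an idempotent $e \in A$ with $e \leq a$; any such $e$ lies below $\mathbf{d}(a)$, so $\mathbf{d}(a) \in A$ by upward closure. To rule out $\mathbf{d}(b) \in A$, I would argue by contradiction: downward directedness would give an idempotent $d \in A$ with $d \leq a$ and $d \leq \mathbf{d}(b)$, and the standard identities $ad = d$ (from $d$ idempotent with $d \leq a$) together with $b = a\mathbf{d}(b)$ (from $b \leq a$) yield $bd = a\mathbf{d}(b)d = ad = d$. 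Hence $d \leq b$, placing $b \in A$ and contradicting $A \in V_{a;b}$.

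For the reverse inclusion, take $P \in V_{\mathbf{d}(a);\mathbf{d}(b)}$. Since $\mathbf{d}(a) \in P$, $P$ is itself an identity filter, and a repeat of the idempotent-extraction argument shows $b \notin P$ immediately. The task thus reduces to showing $a \in P$. For this I would exploit the groupoid structure on $\mathsf{G}(S)$: because $\mathbf{d}(a) \in P$, the set $A = (aP)^{\uparrow}$ is a prime filter with $a \in A$ and $\mathbf{d}(A) = P$. A direct check gives $b \notin A$, since $ap \leq b$ for some $p \in P$ would force $p\mathbf{d}(a) = \mathbf{d}(ap) \leq \mathbf{d}(b)$ and hence $\mathbf{d}(b) \in P$, a contradiction. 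So $A \in V_{a;b}$, and now the hypothesis forces $A$ to be an identity filter, so $A = \mathbf{d}(A) = P$ and in particular $a \in P$. The only step beyond routine verification is this lifting $P \mapsto (aP)^{\uparrow}$, a well-known property of $\mathsf{G}(S)$ recorded in the cited works on filter completions of inverse semigroups; once it is in hand, the identity hypothesis does the remaining work of collapsing the lift back onto $P$ and closes the argument.
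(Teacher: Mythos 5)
Your proof is correct and follows essentially the same route as the paper's: the forward inclusion uses the identity hypothesis on $A$ to obtain $\mathbf{d}(a) \in A$ and to exclude $\mathbf{d}(b)$, and the reverse inclusion lifts $P$ to $A = (aP)^{\uparrow}$, checks $A \in V_{a;b}$, and invokes the hypothesis to collapse $A$ back onto $P$. The one slip is the identity $p\mathbf{d}(a) = \mathbf{d}(ap)$, which requires $p$ to be an idempotent; this is harmless because $P$ is an identity filter, so every $p \in P$ lies above an idempotent of $P$ by which it may be replaced.
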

\begin{proof} If $a = b$ the result is trivial.
Let $A \in V_{a,b}$.
Then since $A$ is an identity prime filter, $\mathbf{d}(A) = A$.
It follows that $\mathbf{d}(a) \in A$.
Suppose that $\mathbf{d}(b) \in A$.
Then $b = a\mathbf{d}(b) \in A$, since $A$ is an identity.
This is a contradiction.
Thus $\mathbf{d}(b) \notin A$.
Hence $A \in V_{\mathbf{d}(a);\mathbf{d}(b)}$.
It follows that $V_{a;b} \subseteq V_{\mathbf{d}(a);\mathbf{d}(b)}$.
Let $F \in V_{\mathbf{d}(a);\mathbf{d}(b)}$.
Then $A = (aF)^{\uparrow}$ is a prime filter.
Clearly, $A \in V_{a}$.
If $b \in A$ then $\mathbf{d}(b) \in F$, which is a contradiction.
Thus $A \in V_{a;b}$.
But by what we proved above $\mathbf{d}(a) \in A$.
Thus there is an idempotent $e \in A$ such that $e \leq \mathbf{d}(a),a$.
It follows that $e \in F$ and so $a \in F$.
We have proved that $A = F$ and so $F \in V_{a;b}$.
\end{proof}

\begin{proposition}\label{prop:marx} Let $S$ be a distributive inverse semigroup.
Then $\mathsf{B}(S)$ is a Boolean inverse semigroup and $\beta$ is a morphism.
\end{proposition}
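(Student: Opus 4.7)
The plan splits into two parts: verifying that $\mathsf{B}(S)$ is a Boolean inverse semigroup, and that $\beta$ is a morphism. The second is essentially bookkeeping. By Lemma~\ref{lem:benn}(1)(2), $\mathsf{V}(S)$ is an inverse subsemigroup of the pseudogroup $\mathsf{L}(\mathsf{G}(S))$, so Lemma~\ref{lem:fillon} immediately gives that $\mathsf{B}(S)$ is a distributive inverse semigroup. The map $\beta(s) = V_s = V_{s;0}$ preserves multiplication (the special case of Lemma~\ref{lem:benn}(2) with $t = v = 0$), preserves zero, and preserves compatible binary joins by Proposition~\ref{prop:key}(5); together these give the morphism claim. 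By Proposition~\ref{prop:definition}, to finish it remains only to show that $\mathsf{E}(\mathsf{B}(S))$ is a Boolean algebra in the sense of this paper, i.e.\ that each principal order ideal is a unital Boolean algebra.

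By Lemma~\ref{lem:cake}, an element $V_{s;t} \in \mathsf{V}(S)$ is an idempotent of $\mathsf{L}(\mathsf{G}(S))$ precisely when $V_{s;t} = V_{\mathbf{d}(s);\mathbf{d}(t)}$, so the idempotents of $\mathsf{V}(S)$ are the sets $V_{e;f}$ with $e,f \in \mathsf{E}(S)$ and $f \leq e$, and every element of $\mathsf{E}(\mathsf{B}(S))$ is a finite union of such $V_{e;f}$ inside $\mathsf{G}(S)$. The strategy is: for idempotents $F \leq E$ of $\mathsf{B}(S)$, show that the set-theoretic difference $E \setminus F$ again lies in $\mathsf{B}(S)$. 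Since on idempotents $\vee$ and $\wedge$ are just set-theoretic union and intersection, this exhibits $E \setminus F$ as the complement of $F$ in the principal ideal $E^{\downarrow}$.

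The pivotal calculation is the identity
\[
V_{e;f} \setminus V_{e';f'} = V_{e;\, f \vee ee'} \cup V_{ef';\, ff'},
\]
valid whenever $f \leq e$ and $f' \leq e'$ are idempotents of $S$ (with empty summands discarded), both terms on the right visibly in $\mathsf{V}(S)$. This is verified by testing membership of identity prime filters and using Lemma~\ref{lem:dypso}(2) to pass to prime filters of the distributive lattice $\mathsf{E}(S)$, where it reduces to the standard formulas $\mathcal{V}_e \cap \mathcal{V}_g^{c} = \mathcal{V}_{e;eg}$ and $\mathcal{V}_e \cap \mathcal{V}_g = \mathcal{V}_{eg}$. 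For general idempotents $E = \bigcup_i V_{e_i;f_i}$ and $F = \bigcup_j V_{e'_j;f'_j}$, set-theoretic distributivity yields
\[
E \setminus F = \bigcup_i \bigcap_j \bigl(V_{e_i;f_i} \setminus V_{e'_j;f'_j}\bigr),
\]
which remains in $\mathsf{B}(S)$ by closure under finite products (intersections, on idempotents) and finite compatible joins. The principal obstacle is the pointwise verification of the displayed identity; once it is in hand, the rest is formal, modulo the minor technicality of tracking which intermediate expressions are nonempty so that the strict inequalities implicit in the definition of $\mathsf{V}(S)$ are respected.
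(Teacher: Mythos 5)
Your proof is correct, but the way you handle the crucial step --- showing that $\mathsf{E}(\mathsf{B}(S))$ is a Boolean algebra --- differs genuinely from the paper's. Both arguments begin identically: Lemma~\ref{lem:fillon} gives distributivity, Lemma~\ref{lem:benn} and Proposition~\ref{prop:key} give that $\beta$ is a morphism, and Lemma~\ref{lem:cake} reduces the idempotents of $\mathsf{B}(S)$ to finite unions of sets $V_{e;f}$ with $e,f \in \mathsf{E}(S)$. At that point the paper uses Lemma~\ref{lem:dypso} to set up an order isomorphism between $\mathsf{E}(\mathsf{B}(S))$ and the Booleanization $\mathsf{B}(\mathsf{E}(S))$ of the distributive lattice of idempotents, and then simply cites the known commutative fact (Johnstone, Proposition~II.4.5) that this is a Boolean algebra. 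You instead stay inside $\mathsf{B}(S)$ and exhibit relative complements explicitly via the identity $V_{e;f} \setminus V_{e';f'} = V_{e;\, f \vee ee'} \cup V_{ef';\, ff'}$, extended to general idempotents by set-theoretic distributivity; I checked this identity against prime filters and it holds (if $e' \notin P$ then $f \vee ee' \notin P$ since $P$ is prime, giving the first term; if $f' \in P$ then $ef' \in P$ and $ff' \notin P$, giving the second; and conversely). The paper's route is shorter and makes the conceptual point that $\mathsf{E}(\mathsf{B}(S)) \cong \mathsf{B}(\mathsf{E}(S))$, i.e.\ that the non-commutative construction restricts to the classical one on idempotents --- a fact used implicitly elsewhere. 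Your route is self-contained and constructive: it produces the complement of $F$ in $E^{\downarrow}$ by an explicit formula rather than by appeal to an external reference, at the cost of the bookkeeping about empty summands and strict inequalities that you correctly flag. Both are complete proofs.
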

\begin{proof} It only remains to prove that the idempotents of  $\mathsf{B}(S)$ form a Boolean algebra.
The idempotents in $\mathsf{B}(S)$ are those partial bisections which are subsets of the identity space of the groupoid $\mathsf{G}(S)$.
They are therefore finite joins of elements of the form $V_{a;b}$ where the only elements of $V_{a;b}$ are identity prime filters.
By Lemma~\ref{lem:cake}, we have that $V_{a;b} = V_{\mathbf{d}(a); \mathbf{d}(b)}$.
It now follows by Lemma~\ref{lem:dypso} that there is an order isomorphism between $\mathsf{E}(\mathsf{B}(S))$ and $\mathsf{B}(\mathsf{E}(S))$
induced by the map  $V_{e;f} \mapsto \mathcal{V}_{e; f}$.
That $\mathsf{B}(\mathsf{E}(S))$ is a Boolean algebra can be easily verified, indeed,
it is the Booleanization of the distributive lattice $\mathsf{E}(S)$ \cite[Proposition~II.4.5]{J}.
This gives us the result.
\end{proof}

\begin{remark}\label{rem:more-monoid}
{\em Let $S$ be a distributive inverse monoid.
Then $V_{1}$ is the identity of $\mathsf{B}(S)$ and the map $\beta$ is a monoid homomorphism.}
\end{remark}

\subsection{Universality}

We have described the semigroup $\mathsf{B}(S)$ in terms of prime filters.
To prove the universality of this construction, we need to convert certain results about prime filters into purely algebraic and order-theoretic results.
The motivation for doing this came from \cite[Section~2.3]{Wehrung1}.
Our first lemma exemplifies our approach.

\begin{lemma}\label{lem:portmanteau} Let $D$ be a distributive lattice and let $e,f,i,j \in D$ such that $f < e$ and $j < i$.
Then $\mathcal{V}_{e ;f} \subseteq \mathcal{V}_{i;j}$ 
if and only if 
$e = f \vee (e \wedge i)$ and $f \wedge j = e \wedge j$.
\end{lemma}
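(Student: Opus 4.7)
The plan is to run the proof on both sides of the equivalence using the classical prime filter separation principle for distributive lattices: if $x \nleq y$ in $D$, then there is a prime filter of $D$ containing $x$ and omitting $y$. This is the commutative analogue of part (1) of Proposition~\ref{prop:key} and is the only nontrivial tool needed; the rest is just unpacking what membership in $\mathcal{V}_{e;f}$ and $\mathcal{V}_{i;j}$ means together with primeness.

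For the forward direction, I would assume $\mathcal{V}_{e;f} \subseteq \mathcal{V}_{i;j}$ and prove the two identities separately. To show $e = f \vee (e \wedge i)$, I would note that $\leq$ is trivial and argue by contradiction: if $f \vee (e \wedge i) < e$, the separation principle yields a prime filter $F$ with $e \in F$ and $f \vee (e \wedge i) \notin F$. Then $f \notin F$, so $F \in \mathcal{V}_{e;f}$; but also $e \wedge i \notin F$, and since $e \in F$ this forces $i \notin F$, contradicting $F \in \mathcal{V}_{i;j} \subseteq \mathcal{V}_i$. To show $f \wedge j = e \wedge j$, again only $\geq$ needs work; if $f \wedge j < e \wedge j$, pick a prime filter $F$ containing $e \wedge j$ but missing $f \wedge j$. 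Then $e, j \in F$, and primeness applied to $f \wedge j \notin F$ together with $j \in F$ forces $f \notin F$, so $F \in \mathcal{V}_{e;f}$ but $j \in F$ prevents $F \in \mathcal{V}_{i;j}$, a contradiction.

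For the reverse direction, suppose $e = f \vee (e \wedge i)$ and $f \wedge j = e \wedge j$, and let $F \in \mathcal{V}_{e;f}$. Since $e \in F$ rewrites as $f \vee (e \wedge i) \in F$, primeness together with $f \notin F$ gives $e \wedge i \in F$, hence $i \in F$. Suppose for contradiction that $j \in F$; then $e \wedge j \in F$, and by hypothesis $e \wedge j = f \wedge j$, which forces $f \in F$, contradicting $F \in \mathcal{V}_{e;f}$. Thus $j \notin F$ and $F \in \mathcal{V}_{i;j}$.

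There is no real obstacle here; the only subtlety is remembering that the separation principle used in the forward direction is the purely lattice-theoretic version (Birkhoff's prime filter theorem for distributive lattices), applied to $D$ rather than to an inverse semigroup. Everything else reduces to manipulations with primeness and the two given algebraic identities, so the proof should be quite short.
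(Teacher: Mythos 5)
Your proof is correct and follows essentially the same route as the paper's: both directions use exactly the same prime filter separation argument for distributive lattices and the same manipulations with primeness and upward closure. The only cosmetic quibble is that in the second half of the forward direction the step deducing $f \notin F$ from $j \in F$ and $f \wedge j \notin F$ uses closure of filters under meets rather than primeness, but this does not affect the argument.
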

\begin{proof} Suppose first that  $e = f \vee (e \wedge i)$ and $f \wedge j = e \wedge j$.
Let $F \in  \mathcal{V}_{e ;f}$
Then $e \in F$ and $f \notin F$.
It follows that $e \wedge i \in F$ and so $i \in F$.
Suppose that $j \in F$.
Then $e \wedge j \in F$ and so $f \wedge j \in F$ meaning that $f \in F$, which is a contradiction.
It follows that $j \notin F$.

To prove the converse, let $\mathcal{V}_{e ;f} \subseteq \mathcal{V}_{i;j}$.
Clearly, $f \vee (e \wedge i) \leq e$.
Suppose that $e \nleq f \vee (e \wedge i)$.
Then there is a prime filter $F$ such that $e \in F$ and $f \vee (e \wedge i) \notin F$.
Clearly, $f \notin F$.
Thus, by assumption, $i \in F$ and $j \notin F$.
But then $e,i \in F$ and so $e \wedge i \in F$ implying that  $f \vee (e \wedge i) \in F$, which is a contradiction.
It follows that $e = f \vee (e \wedge i)$.
Clearly, $f \wedge j \leq e \wedge j$.
Suppose that $e \wedge j \nleq f \wedge j$.
Then there is a prime filter $F$ such that $e \wedge j \in F$ and $f \wedge j \notin F$.
Clearly, $f \notin F$ and so $F \in \mathcal{V}_{e;f}$.
Thus $i \in F$ and $j \notin F$ but this contradicts the fact that $j \in F$.
It follows that  $e \wedge j = f \wedge j$.
\end{proof}

\begin{lemma}\label{lem:valery} Let $S$ be a distributive inverse semigroup
and let $a,b,c,d \in S$ be such that $b < a$ and $d < c$.
\begin{enumerate}

\item $V_{a;b} \subseteq V_{c;d}$ if and only if 
$\mathcal{V}_{\mathbf{d}(a); \mathbf{d}(b)} \subseteq  \mathcal{V}_{\mathbf{d}(c); \mathbf{d}(d)}$ 
and there exists $x \leq c$ such that $a = b \vee x$.  

\item $V_{a;b} \subseteq V_{c;d}$ if and only if there exist $d \leq b' < a' \leq c$ such that $V_{a;b} =  V_{a';b'}$.

\end{enumerate}
\end{lemma}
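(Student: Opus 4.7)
The plan is to prove Part~(1) first and then derive Part~(2) from it by an explicit construction of $(a',b')$ using the element $x$ produced in~(1). The backward direction of Part~(1) is a direct filter-theoretic check: given $A \in V_{a;b}$, primality of $A$ applied to $a = b \vee x$ forces $x \in A$, hence $c \in A$; and the idempotent hypothesis applied to the prime filter of $\mathsf{E}(S)$ obtained from $\mathbf{d}(A)$ rules out $d \in A$, using the observation that $b \in A \Leftrightarrow \mathbf{d}(b) \in \mathbf{d}(A)$ whenever $b \leq a$ and $a \in A$ (which is the content already exploited in Lemma~\ref{lem:dypso}(1)).

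The forward direction of Part~(1) is where the real work lies. The idempotent containment follows at once from Lemma~\ref{lem:dypso}, parts~(1) and~(2). For the element $x$, I would observe that every $A \in V_a$ either lies in $V_b$ or lies in $V_{a;b} \subseteq V_c$; in the latter case the filter property of $A$ supplies some $z_A \in A$ with $z_A \leq a$ and $z_A \leq c$. This yields an open cover
\[
V_a \;=\; V_b \;\cup\; \bigcup_{z \leq a,\, z \leq c} V_z.
\]
Since $V_a$ is compact-open (Proposition~\ref{prop:key}(4)), I would pass to a finite subcover indexed by $z_1,\dots,z_n$ and set $x = z_1 \vee \cdots \vee z_n$. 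The join exists because the $z_i$ are pairwise compatible (being all $\leq a$), it lies below $c$ because $c$ is a common upper bound, and it satisfies $V_{b \vee x} = V_a$; Proposition~\ref{prop:key}(2) then gives $a = b \vee x$.

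For Part~(2), the backward direction is immediate: $a' \leq c$ gives $c \in A$ and $d \leq b'$ rules out $d \in A$. For the forward direction I would take the $x$ produced by Part~(1) and set
\[
a' = x \vee d, \qquad b' = (b \wedge x) \vee d,
\]
where $b \wedge x$ exists by Lemma~\ref{lem:macron}(1) (as $b,x \leq a$) and the outer joins exist because $x,d \leq c$. The inequalities $d \leq b' \leq a' \leq c$ are routine, and strictness $b' < a'$ will follow once we know $V_{a';b'} = V_{a;b} \neq \varnothing$. After expanding $V_{a'}$ and $V_{b'}$ via Proposition~\ref{prop:key}(5) and the identity $V_{b \wedge x} = V_b \cap V_x$, the equality $V_{a';b'} = V_{a;b}$ reduces to the single inclusion $V_x \cap V_d \subseteq V_b$. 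This is precisely the contrapositive of the hypothesis: any prime filter containing $x$ and $d$ contains $a$ and $d$, and so by $V_{a;b} \subseteq V_{c;d}$ must contain $b$.

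The main obstacle is the compactness step in Part~(1): it is the only point in the argument where one must convert a containment of partial bisections in $\mathsf{G}(S)$ into a single algebraic witness $x \in S$, and it relies essentially on compact-openness of $V_a$. Once $x$ is available, Part~(2) is a short calculation with compatible meets and joins.
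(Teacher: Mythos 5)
Your proof is correct and follows essentially the same route as the paper's: the forward direction of (1) is the same compactness argument (cover the relevant part of $V_{a}$ by basic opens indexed by elements below both $a$ and $c$, extract a finite subcover, and take the join), and your choice $a' = x \vee d$, $b' = (b \wedge x) \vee d$ in (2) coincides with the paper's two-step reduction after simplification. The only (welcome) difference is that you make explicit the inclusion $V_{x} \cap V_{d} \subseteq V_{b}$ that the paper leaves as ``routine to check.''
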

\begin{proof} (1) Suppose that $V_{a;b} \subseteq V_{c;d}$. 
Then $V_{a} \subseteq V_{c} \cup V_{b}$.
Thus $V_{a} = (V_{a} \cap V_{c}) \cup V_{b}$.
Since $V_{a} \cap V_{c}$ is open (and non-empty) we may cover it by means of sets of the form $V_{x_{i}}$ where $i \in I$.
If we adjoin $V_{b}$ we thereby have a cover of $V_{a}$.
Since $V_{a}$ is compact we may write $(V_{a} \cap V_{c}) \cup V_{b} = \left( \bigcup_{i =1}^{m} V_{x_{i}} \right) \cup V_{b}$
for some finite subset $\{1, \ldots, m\} \subseteq I$.
We have that $\bigcup_{i =1}^{m} V_{x_{i}} \subseteq V_{a} \cap V_{c}$.
Thus the elements $x_{i}$ are pairwise compatible and so have a join $x$, say, where $x \leq a,c$.
It follows that $V_{a} = V_{x} \cup V_{b}$.
Since $x,b \leq a$ the elements $x$ and $b$ are compatible and so $a = x \vee b$.
The fact that $\mathcal{V}_{\mathbf{d}(a); \mathbf{d}(b)} \subseteq  \mathcal{V}_{\mathbf{d}(c); \mathbf{d}(d)}$ 
follows by Lemma~\ref{lem:dypso}.

To prove the converse,  
suppose that  $V_{\mathbf{d}(a); \mathbf{d}(b)} \subseteq  V_{\mathbf{d}(c); \mathbf{d}(d)}$ 
and there exists a non-zero $x \leq c$ such that $a = b \vee x$.  
Let $A \in V_{a;b}$.
Then by Lemma~\ref{lem:dypso}, we have that $\mathbf{d}(A) \in V_{\mathbf{d}(a); \mathbf{d}(b)}$ and so by
assumption $\mathbf{d}(A) \in V_{\mathbf{d}(c); \mathbf{d}(d)}$.
By assumption, $a \in A$ and so since $b \notin A$ we have that $x \in A$ and so $c \in A$.
Suppose that $d \in A$.
Then $\mathbf{d}(d) \in \mathbf{d}(A)$ which is a conradiction.
It follows that $A \in V_{c;d}$. 

(2) By part (1) there exists $x \leq c$ such that $a = b \vee x$.
The meet $b \wedge x$ exists since $b,x \leq a$ by Lemma~\ref{lem:macron}.
Likewise, the meet $d \wedge x$ exists since $d,x \leq c$.
But $b \wedge x, d \wedge x \leq x$ and so $(b \wedge x) \vee (d \wedge x)$ exists.
It is now routine to check that
$$V_{a;b} = V_{x;(x \wedge b) \vee (x \wedge d)}.$$
Observe that $x \leq c$.
In what follows, we may therefore assume, without loss of generality, that in fact $a \leq c$.
It is now immediate that $b \vee d \leq a \vee d$ with both joins existing.
It is routine to check that
$$V_{a;b} = V_{(a \vee d);(b \vee d)}.$$
But
$$d \leq b' = b \vee d < a' = a \vee d \leq c.$$
\end{proof}

The following result is the first step in proving universality.

\begin{proposition}\label{prop:gaulle} Let $S$ be a distributive inverse semigroup
and let $\alpha \colon S \rightarrow T$ be a morphism to a distributive inverse semigroup $T$.
If $V_{a;b} \subseteq V_{c;d}$ then  $V_{\alpha(a);\alpha(b)} \subseteq V_{\alpha(c);\alpha(d)}$.
\end{proposition}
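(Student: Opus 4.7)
The plan is to translate the prime-filter-theoretic hypothesis $V_{a;b} \subseteq V_{c;d}$ into purely algebraic data on $a,b,c,d$ and their idempotent parts, transport that data across $\alpha$, and then translate back inside $T$.

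First I would invoke Lemma~\ref{lem:valery}(1) to split the hypothesis into two pieces: (A) the idempotent inclusion $\mathcal{V}_{\mathbf{d}(a);\mathbf{d}(b)} \subseteq \mathcal{V}_{\mathbf{d}(c);\mathbf{d}(d)}$, and (B) the existence of some $x \leq c$ with $a = b \vee x$. Because the idempotent semilattice $\mathsf{E}(S)$ of a distributive inverse semigroup is a distributive lattice (any two idempotents commute and are compatible, and compatible joins of idempotents exist by distributivity), Lemma~\ref{lem:portmanteau} rewrites (A) as the conjunction of the two identities $\mathbf{d}(a) = \mathbf{d}(b) \vee \mathbf{d}(a)\mathbf{d}(c)$ and $\mathbf{d}(b)\mathbf{d}(d) = \mathbf{d}(a)\mathbf{d}(d)$, where binary meets of idempotents have been replaced by products. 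At this point the hypothesis has been recast entirely in terms of multiplication, $\mathbf{d}$, and two compatible joins.

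Next I would push these data through $\alpha$. Products and the operation $\mathbf{d}$ are preserved by any homomorphism, so the two idempotent identities transport verbatim. The join $\mathbf{d}(b) \vee \mathbf{d}(a)\mathbf{d}(c)$ is a join of two idempotents lying below $\mathbf{d}(a)$, hence a compatible join, and the join $a = b \vee x$ is compatible because $b,x \leq a$; both are therefore preserved by the morphism $\alpha$. This yields inside $T$ the identities $\mathbf{d}(\alpha(a)) = \mathbf{d}(\alpha(b)) \vee \mathbf{d}(\alpha(a))\mathbf{d}(\alpha(c))$ and $\mathbf{d}(\alpha(b))\mathbf{d}(\alpha(d)) = \mathbf{d}(\alpha(a))\mathbf{d}(\alpha(d))$, together with $\alpha(a) = \alpha(b) \vee \alpha(x)$ where $\alpha(x) \leq \alpha(c)$.

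Finally, Lemma~\ref{lem:portmanteau} applied inside $\mathsf{E}(T)$ reinterprets the two idempotent identities as $\mathcal{V}_{\mathbf{d}(\alpha(a));\mathbf{d}(\alpha(b))} \subseteq \mathcal{V}_{\mathbf{d}(\alpha(c));\mathbf{d}(\alpha(d))}$, and combining this with the decomposition of $\alpha(a)$, Lemma~\ref{lem:valery}(1) applied in $T$ delivers the desired inclusion $V_{\alpha(a);\alpha(b)} \subseteq V_{\alpha(c);\alpha(d)}$. The main obstacle, as I see it, is purely one of bookkeeping: one must check that every meet appearing in the translation is either a product of idempotents or a compatible meet, and every join is a compatible join, so that $\alpha$ genuinely carries the algebraic data across. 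Once this is verified, the argument is essentially mechanical; any degenerate cases in which $\alpha$ collapses $\alpha(b)$ onto $\alpha(a)$ or $\alpha(d)$ onto $\alpha(c)$ cause no trouble, as one side of the target inclusion then becomes empty or the inclusion follows from the order alone.
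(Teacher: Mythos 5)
Your proof is correct and rests on the same two pillars as the paper's, namely Lemma~\ref{lem:valery}(1) and Lemma~\ref{lem:portmanteau}, but it is organized more directly. The paper first proves the special case $V_{a;b}=V_{c;d}$ (extracting the algebraic data from both inclusions) and then reduces the general inclusion to that case via Lemma~\ref{lem:valery}(2), which replaces $V_{a;b}$ by some $V_{a';b'}$ with $d\le b'<a'\le c$ and finishes with the easy observation that such a sandwich forces $V_{\alpha(a');\alpha(b')}\subseteq V_{\alpha(c);\alpha(d)}$. You instead exploit the fact that Lemma~\ref{lem:valery}(1) is already a biconditional characterization of the inclusion, so you translate forward, push the data through $\alpha$, and translate back without ever passing through the equality case; this saves the appeal to Lemma~\ref{lem:valery}(2) entirely. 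Your bookkeeping is right: the identities of Lemma~\ref{lem:portmanteau} involve only products of idempotents and joins of idempotents (automatically compatible), and $a=b\vee x$ is a compatible join since $b,x\le a$, so everything survives an arbitrary morphism. The one place you are too quick is the degenerate case $\alpha(c)=\alpha(d)$ with $\alpha(b)<\alpha(a)$: the required emptiness of $V_{\alpha(a);\alpha(b)}$ does not follow ``from the order alone'' but from the transported identities, which give $\mathbf{d}(\alpha(a))=\mathbf{d}(\alpha(b))\vee\mathbf{d}(\alpha(a))\mathbf{d}(\alpha(c))=\mathbf{d}(\alpha(b))\vee\mathbf{d}(\alpha(b))\mathbf{d}(\alpha(c))=\mathbf{d}(\alpha(b))$ and hence $\alpha(a)=\alpha(a)\mathbf{d}(\alpha(b))=\alpha(b)$, so the left-hand set is empty after all. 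Alternatively, note that the ``if'' directions of Lemma~\ref{lem:portmanteau} and Lemma~\ref{lem:valery}(1) nowhere use the strictness hypotheses, so no case split is needed at all.
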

\begin{proof} We deal first with the special case where $V_{a;b} = V_{c;d}$, we have that
$V_{\mathbf{d}(a); \mathbf{d}(b)} =  V_{\mathbf{d}(c); \mathbf{d}(d)}$ 
and there exist $x \leq c$ such that $a = b \vee x$ 
and $y \leq a$ such that $c = d \vee y$ 
by Lemma~\ref{lem:valery}.
By  Lemma~\ref{lem:dypso} we have that $\mathcal{V}_{\mathbf{d}(a); \mathbf{d}(b)} =  \mathcal{V}_{\mathbf{d}(c); \mathbf{d}(d)}$. 
Lemma~\ref{lem:portmanteau} tells us that $\mathcal{V}_{\mathbf{d}(a); \mathbf{d}(b)} =  \mathcal{V}_{\mathbf{d}(c); \mathbf{d}(d)}$
is equivalent to purely lattice-theoretic conditions and so 
$\mathcal{V}_{\alpha (\mathbf{d}(a)); \alpha(\mathbf{d}(b))} =  \mathcal{V}_{\alpha(\mathbf{d}(c)); \alpha(\mathbf{d}(d))}$.
By Lemma~\ref{lem:dypso}, we therefore have that 
$V_{\alpha (\mathbf{d}(a)); \alpha(\mathbf{d}(b))} =  V_{\alpha(\mathbf{d}(c)); \alpha(\mathbf{d}(d))}$.
From $a = b \vee x$ where $x \leq a,c$
and $c = d \vee y$ where $y \leq a,c$
we get that
$\alpha (a) = \alpha (b) \vee \alpha (x)$ where $\alpha (x) \leq \alpha (a),\alpha (c)$
and $\alpha (c) = \alpha (d) \vee \alpha (y)$ where $\alpha (y) \leq \alpha (a), \alpha (c)$.
By Lemma~\ref{lem:valery} again,
it follows that  $V_{\alpha(a);\alpha(b)} = V_{\alpha(c);\alpha(d)}$.

We can now deal with the case where $V_{a;b} \subseteq V_{c;d}$.
By Lemma~\ref{lem:valery}, there exist $d \leq b' < a' \leq c$ such that $V_{a;b} =  V_{a';b'}$.
But by the special case proved above,
$V_{\alpha (a);\alpha (b)} =  V_{\alpha (a');\alpha (b')}$.
Now $\alpha (d) \leq \alpha (b') < \alpha (a') \leq \alpha (c)$.
It is routine to check that
$V_{\alpha (a');\alpha (b')} \subseteq V_{\alpha(c);\alpha(d)}$.
Thus 
 $V_{\alpha(a);\alpha(b)} \subseteq V_{\alpha(c);\alpha(d)}$,
as required.
\end{proof}

\begin{lemma}\label{lem:garl} Let $S$ be a distributive inverse semigroup.
Let $a,b,a_{i},b_{i} \in S$, where $1 \leq i \leq m$, be such that $b \leq b_{i} \leq a_{i} \leq a$,
and where $\{V_{a_{i};b_{i}} \colon 1 \leq i \leq m\}$ is a compatible subset of $\mathsf{B}(S)$.
Then
$$V_{a;b} = \bigcup_{i=1}^{m} V_{a_{i};b_{i}}$$
if and only if the following three conditions hold:
\begin{enumerate}
\item $a = \bigvee_{i=1}^{m} a_{i}$.
\item $b = \bigwedge_{i=1}^{m} b_{i}$, a compatible meet.
\item For all $X \subseteq \{1, \ldots, m\} = Y$, where $X \neq \emptyset$ and $X \neq Y$, we have that 
$$\bigwedge_{i \in X} b_{i} \leq \bigvee_{j \in Y \setminus X} a_{j},$$
where $\bigwedge_{i \in X} b_{i}$ is a compatible meet. 
\end{enumerate}
\end{lemma}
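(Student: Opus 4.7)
The plan is to prove both directions by translating the set-theoretic statement $V_{a;b} = \bigcup_{i} V_{a_{i};b_{i}}$ into algebraic conditions via prime filters, using Proposition~\ref{prop:key}(2) as the key bridge: containment of the $V$-sets is equivalent to the order relation on the underlying elements. Throughout, I observe that since $b_{i} \leq a_{i} \leq a$ for every $i$, all the $a_{i}$ and all the $b_{i}$ are pairwise $\leq a$ and hence pairwise compatible, so the joins $\bigvee a_{i}$ and the meets $\bigwedge_{i \in X} b_{i}$ that appear in (1), (2), and (3) do exist in $S$ and are compatible.

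For the forward direction, I would show each of (1), (2), (3) separately by testing membership of a prime filter against the equation $V_{a;b} = \bigcup V_{a_{i};b_{i}}$. For (1), any prime filter $P$ containing $a$ either omits $b$ (whence $P \in V_{a_{i};b_{i}}$ for some $i$, so $a_{i} \in P$), or contains $b$ (whence $a_{i} \in P$ since $b \leq a_{i}$); either way $\bigvee a_{i} \in P$, giving $V_{a} \subseteq V_{\bigvee a_{i}}$ and therefore $a \leq \bigvee a_{i}$; the reverse inequality is automatic. For (2), any prime filter containing $\bigwedge b_{i}$ must contain every $b_{i}$, hence cannot lie in any $V_{a_{i};b_{i}}$, hence is not in $V_{a;b}$, and since it contains $a$ (because $b_{i} \leq a$) it must contain $b$; this gives $\bigwedge b_{i} \leq b$, and the reverse is clear. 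For (3), given $\emptyset \neq X \subsetneq Y$, any prime filter $P$ containing $\bigwedge_{i \in X} b_{i}$ contains every $b_{i}$ with $i \in X$, hence cannot lie in any $V_{a_{i};b_{i}}$ with $i \in X$; the hypothesis and a case split on whether $b \in P$ then force some $a_{j}$ with $j \notin X$ to lie in $P$, yielding $\bigwedge_{i \in X} b_{i} \leq \bigvee_{j \notin X} a_{j}$.

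For the converse, the inclusion $\bigcup V_{a_{i};b_{i}} \subseteq V_{a;b}$ is immediate from $a_{i} \leq a$ and $b \leq b_{i}$. For the reverse inclusion, fix $P \in V_{a;b}$ and set $X = \{i : b_{i} \in P\}$. Using (2), $X = Y$ would force $b = \bigwedge b_{i} \in P$, contradicting $b \notin P$; so $X \subsetneq Y$. If $X = \emptyset$, (1) together with primeness of $P$ yields some $a_{i} \in P$, and then $P \in V_{a_{i};b_{i}}$. If $\emptyset \neq X \subsetneq Y$, condition (3) gives $\bigwedge_{i \in X} b_{i} \leq \bigvee_{j \notin X} a_{j}$; the left side lies in $P$ by construction, so primeness produces $j \notin X$ with $a_{j} \in P$, and then $P \in V_{a_{j};b_{j}}$.

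The main obstacle is bookkeeping rather than conceptual: ensuring the relevant compatible meets and joins exist (handled uniformly by the observation that everything lies beneath $a$), and handling the boundary cases $X = \emptyset$ and $X = Y$ in the reverse direction so that they cleanly reduce to (1) and (2) respectively. A secondary point is that prime filters are closed under existing binary meets of their elements, which is what legitimizes the step ``all $b_{i} \in P$ implies $\bigwedge b_{i} \in P$'' in the argument for (2) and in the reverse direction; this follows from the filter axiom together with Lemma~\ref{lem:macron}.
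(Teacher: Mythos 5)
Your proposal is correct and follows essentially the same route as the paper: translate each condition into a statement about prime filters and use Proposition~\ref{prop:key} to pass between containments of the sets $V_{x;y}$ and order relations in $S$, then in the converse partition according to which $b_{i}$ lie in a given prime filter. The only (cosmetic) differences are that you argue some steps directly via $V_{x}\subseteq V_{y}$ rather than by producing a separating prime filter, and you explicitly treat the case $X=\emptyset$ in the converse, which the paper's proof passes over silently.
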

\begin{proof} Observe that since $a_{i},b_{i} \leq a$ for all $1 \leq i \leq m$ the set $\{a_{i}, b_{i} \colon 1 \leq i \leq m\}$ is compatible.
Thus all joins and meets which are needed actually exist. 

Suppose first that  $V_{a;b} = \bigcup_{i=1}^{m} V_{a_{i};b_{i}}$.
We prove that (1), (2) and (3) all hold.
(1) holds. Clearly $a \geq \bigvee_{i=1}^{m} a_{i}$.
To prove the reverse inequality,
let $P$ be a prime filter that contains $a$ but omits $\bigvee_{i=1}^{m} a_{i}$.
Then $P$ must omit $b$.
It follows that  $a_{i} \in P$ for some $i$ but this is a contradiction.
The proof that (2) holds is similar.
(3) holds.
Suppose that $X \subseteq Y$ where $X \neq \emptyset$ and $X \neq Y$.
Let $P$ be a prime filter that contains $\bigwedge_{i \in X}b_{i}$ but omits $\bigvee_{j \in Y \setminus X} a_{j}$.
Let $i \in X$.
Then $b_{i} \in P$ and so $a \in P$.
But if $b \in P$ then all $a_{i}$ would be $P$.
Thus $P$ contains $a$ and omits $b$.
By assumption, $P$ contains $a_{k}$ but omits $b_{k}$, for some $k$.
Now $k \notin Y \setminus X$ and so $k \in X$.
But then by assumption $b_{k} \in P$ which is a contradiction.

We now assume that (1), (2) and (3) all hold.
We prove that $V_{a;b} = \bigcup_{i=1}^{m} V_{a_{i};b_{i}}$.
Let $P \in V_{a;b}$.
Then $P$ contains $a$ and omits $b$.
By (1), $P$ must contain at least one $a_{i}$ and by (2), $P$ must omit at least one $b_{j}$.
Let $X \subseteq \{1, \ldots, m \} = Y$ be the proper, non-empty subset of {\em all} $i \in Y$ such that $b_{i} \in P$.
Then $\bigwedge_{i \in X} b_{i} \in P$ and so by (3), we must have that $\bigvee_{j \in Y \setminus X} a_{j} \in P$.
Thus for some $j$ we have that $a_{j} \in P$.
But, by assumption, $b_{j} \notin P$.
It follows that $P \in V_{a_{j} ; b_{j} }$.
That the reverse inclusion holds is immediate.
\end{proof}

\begin{proposition}\label{prop:britannia} 
Let $\alpha \colon S \rightarrow T$ be a morphism between distributive inverse semigroups.
Define $\gamma \colon \mathsf{B}(S) \rightarrow \mathsf{B}(T)$ by
$$\beta \left(  \bigcup_{i=1}^{m} V_{a_{i} ; b_{i}} \right) 
= 
 \bigcup_{i=1}^{m} V_{\alpha (a_{i}) ; \alpha (b_{i})}$$
where $\{V_{a_{i};b_{i}} \colon 1 \leq i \leq m\}$ is a compatible subset of $\mathsf{B}(S)$.
 Then $\beta$ is well-defined.
\end{proposition}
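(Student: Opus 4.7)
The plan is to establish two things: first, that the right-hand side is a well-formed element of $\mathsf{B}(T)$; and second, that the value does not depend on the representation of the input. For the first, compatibility of $\{V_{a_{i};b_{i}}\}$ in $\mathsf{B}(S)$ forces pairwise compatibility of the $a_{i}$ in $S$; since morphisms of distributive inverse semigroups preserve the compatibility relation, $\{V_{\alpha(a_{i});\alpha(b_{i})}\}$ is compatible in $\mathsf{B}(T)$ and the right-hand union is defined.

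For the second, suppose $\bigcup_{i=1}^{m} V_{a_{i};b_{i}} = \bigcup_{j=1}^{n} V_{c_{j};d_{j}}$. Since a finite union is contained in a set if and only if each summand is, the equality of images will follow once we prove, for every single-term inclusion $V_{a_{i};b_{i}} \subseteq \bigcup_{j} V_{c_{j};d_{j}}$, that $V_{\alpha(a_{i});\alpha(b_{i})} \subseteq \bigcup_{j} V_{\alpha(c_{j});\alpha(d_{j})}$; the roles of the two representations are exchanged to obtain the reverse inclusion. The strategy is to decompose $V_{a_{i};b_{i}}$ into pieces of the shape required by Lemma~\ref{lem:garl}, and then exploit the fact that the conditions of that lemma are purely algebraic and hence preserved by $\alpha$.

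Concretely, since the whole family is compatible, the meets $a_{i} \wedge c_{j}$, $b_{i} \wedge c_{j}$, $a_{i} \wedge d_{j}$ and the join $(b_{i} \wedge c_{j}) \vee (a_{i} \wedge d_{j})$ all exist as compatible meets and joins. A direct prime-filter calculation, using the defining property of prime filters and distributivity in $\mathsf{E}(S)$, gives
$$V_{a_{i};b_{i}} \cap V_{c_{j};d_{j}} = V_{e_{ij};f_{ij}}, \qquad e_{ij} = a_{i} \wedge c_{j}, \quad f_{ij} = (b_{i} \wedge c_{j}) \vee (a_{i} \wedge d_{j}).$$
Because $V_{a_{i};b_{i}}$ is disjoint from $V_{b_{i}}$, replacing $e_{ij}, f_{ij}$ by $e'_{ij} = e_{ij} \vee b_{i}$ and $f'_{ij} = f_{ij} \vee b_{i}$ leaves each piece unchanged as a subset of $\mathsf{G}(S)$. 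One now has $b_{i} \leq f'_{ij} \leq e'_{ij} \leq a_{i}$, so the decomposition $V_{a_{i};b_{i}} = \bigcup_{j} V_{e'_{ij};f'_{ij}}$ fits precisely the hypothesis of Lemma~\ref{lem:garl}.

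Lemma~\ref{lem:garl} now converts this equation into conditions (1)--(3), which involve only compatible joins, compatible meets, and order comparisons — data preserved by $\alpha$ by Remark~\ref{rem:lepen} and the definition of a morphism. Applying $\alpha$ transports the conditions to the $\alpha$-images, and the converse direction of Lemma~\ref{lem:garl} yields $V_{\alpha(a_{i});\alpha(b_{i})} = \bigcup_{j} V_{\alpha(e'_{ij});\alpha(f'_{ij})}$. Since $V_{e'_{ij};f'_{ij}} \subseteq V_{c_{j};d_{j}}$, Proposition~\ref{prop:gaulle} delivers $V_{\alpha(e'_{ij});\alpha(f'_{ij})} \subseteq V_{\alpha(c_{j});\alpha(d_{j})}$, giving the desired containment. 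The main obstacle is the refinement-and-shift step: the pieces $V_{e'_{ij};f'_{ij}}$ must be expressed as purely algebraic combinations of the original data (so that $\alpha$ can transport them faithfully) while simultaneously being squeezed into the rigid bracketing $b_{i} \leq f'_{ij} \leq e'_{ij} \leq a_{i}$ that Lemma~\ref{lem:garl} requires.
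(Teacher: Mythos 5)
Your overall architecture matches the paper's: reduce to single-term inclusions $V_{a_i;b_i}\subseteq\bigcup_j V_{c_j;d_j}$, decompose $V_{a_i;b_i}$ into pieces each lying inside some $V_{c_j;d_j}$ and squeezed into the bracket $b_i\leq f\leq e\leq a_i$, transport the resulting Lemma~\ref{lem:garl} conditions through $\alpha$, and finish with Proposition~\ref{prop:gaulle}. The gap is in how you produce the pieces. You set $e_{ij}=a_i\wedge c_j$ and $f_{ij}=(b_i\wedge c_j)\vee(a_i\wedge d_j)$, justified by the claim that ``the whole family is compatible, so these exist as compatible meets.'' That inference is not valid: compatibility of $V_{a_i;b_i}$ and $V_{c_j;d_j}$ in $\mathsf{B}(S)$ (which does hold, since both lie in a common partial bisection) does not imply $a_i\sim c_j$ in $S$ --- the product $V_{a_i;b_i}^{-1}V_{c_j;d_j}=V_{a_i^{-1}c_j;\,a_i^{-1}d_j\vee b_i^{-1}c_j}$ can consist entirely of identity filters without $a_i^{-1}c_j$ being an idempotent. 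Without $a_i\sim c_j$, the meet $a_i\wedge c_j$ need not exist in a general distributive inverse semigroup, and even if it happened to exist it would not be a \emph{compatible} meet, so Remark~\ref{rem:lepen} would not let you push it through $\alpha$. The paper itself signals that your intersection formula is special: it is exactly the formula used in Proposition~\ref{prop:mao}, which is available precisely when $S$ is a $\wedge$-semigroup, and $\mathsf{B}(S)$ is a $\wedge$-semigroup only when $S$ is a weak semilattice. (The same misconception appears in your first paragraph, where compatibility of the $V_{a_i;b_i}$ is said to force compatibility of the $a_i$ in $S$; the compatibility of the image family needs a different justification.)

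The repair is the paper's product trick, which is close in spirit to what you wrote but uses only the semigroup operations: since the two unions agree, $V_{a_i;b_i}=\bigcup_j V_{c_j;d_j}V_{a_i;b_i}^{-1}V_{a_i;b_i}$, and by Lemma~\ref{lem:benn} each factor is the single basic piece $V_{c_j\mathbf{d}(a_i);\,c_ja_i^{-1}b_i\vee c_jb_i^{-1}a_i\vee d_j\mathbf{d}(a_i)}$, contained in $V_{c_j;d_j}$. In other words, the intersection $V_{a_i;b_i}\cap V_{c_j;d_j}$, which is the compatible meet in $\mathsf{B}(S)$, is computed by the algebraic formula $st^{-1}t$ of Lemma~\ref{lem:macron}, yielding numerator $c_j\mathbf{d}(a_i)$ rather than the possibly nonexistent $a_i\wedge c_j$. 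All the data are now genuine products and compatible joins in $S$, hence preserved by $\alpha$, and your subsequent steps (adjusting into the bracket via Lemma~\ref{lem:valery}, applying Lemma~\ref{lem:garl} in both directions, and invoking Proposition~\ref{prop:gaulle}) go through unchanged.
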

\begin{proof} The proof in the case $i = 1$ follows by Proposition~\ref{prop:gaulle}.

Next consider the case where
$$V_{a ; b}   
=
\bigcup_{i=1}^{m} V_{a_{i} ; b_{i}}.$$    
By Lemma~\ref{lem:valery},
we may assume without loss of generality that $b \leq b_{i} \leq a_{i} \leq a$.
We now use Lemma~\ref{lem:garl} and Remark~\ref{rem:lepen}
to deduce that 
$$V_{\alpha (a) ; \alpha (b)}   
=
\bigcup_{i=1}^{m} V_{\alpha (a_{i}) ; \alpha (b_{i})}.$$

Finally, suppose that
$$\bigcup_{i=1}^{m} V_{a_{i}; b_{i}}
=
\bigcup_{j=1}^{n} V_{c_{j} ; d_{j}}$$
where $\{V_{a_{i};b_{i}} \colon 1 \leq i \leq m \}$ and $\{V_{c_{j};d_{j}} \colon 1 \leq j \leq n \}$
are compatible subsets of $\mathsf{B}(S)$.
Then 
$$V_{a_{i} ; b_{i}} \subseteq \bigcup_{j=1}^{n} V_{c_{j} ; d_{j}}$$ 
for each $i$.
Thus 
$$
V_{a_{i} ; b_{i}} 
= \bigcup_{j=1}^{n} V_{c_{j} ; d_{j}}V_{a_{i};b_{i}}^{-1}V_{a_{i};b_{i}}
= \bigcup_{j=1}^{n} V_{c_{j}a_{i}^{-1}a_{i} ; c_{j}a_{i}^{-1}b_{i} \vee c_{j}b_{i}^{-1}a_{i} \vee d_{j}a_{i}^{-1}a_{i}}.
$$
This makes essential use of the fact that we are working in an inverse semigroup.
Observe that
$$V_{c_{j}a_{i}^{-1}a_{i} ; c_{j}a_{i}^{-1}b_{i} \vee c_{j}b_{i}^{-1}a_{i} \vee d_{j}a_{i}^{-1}a_{i}}
\subseteq
V_{c_{j};d_{j}}.$$
Thus by the previous result and part (2) of Proposition~\ref{prop:gaulle},
we deduce that
$$V_{\alpha (a_{i}) ; \alpha (b_{i})} \subseteq  \bigcup_{j=1}^{n} V_{\alpha (c_{j}) ; \alpha (d_{j})}.$$   
Thus 
$$\bigcup_{i=1}^{m} V_{\alpha (a_{i}) ; \alpha (b_{i})}
\subseteq
\bigcup_{j=1}^{n} V_{\alpha (c_{j}) ; \alpha (d_{j})}.$$   
By symmetry we get equality.
\end{proof}

\begin{center}
{\bf We can now complete the proof of Theorem~\ref{them:bc}.}
\end{center}

\begin{proof}
Let $S$ be a distributive inverse semigroup and let $\theta \colon S \rightarrow T$ be any morphism to a Boolean inverse semigroup $T$.
Define $\gamma \colon \mathsf{B}(S) \rightarrow T$
by
$$\gamma \left(  \bigcup_{i=1}^{m} V_{a_{i} ; b_{i}} \right) 
= 
 \bigvee_{i=1}^{m} \alpha (a_{i}) \setminus \alpha (b_{i})$$
where $\{V_{a_{i};b_{i}} \colon 1 \leq i \leq m\}$ is a compatible subset of $\mathsf{B}(S)$.
This is a well-defined function by Proposition~\ref{prop:britannia} and Lemma~\ref{lem:properties}
and it is a homomorphism by Lemma~\ref{lem:benn} and a morphism by construction.
Clearly, $\theta = \beta \gamma$ and it is immediate that $\gamma$ is the unique morphism making the diagram of maps commute.
\end{proof}

\begin{proposition}\label{prop:mao} Let $S$ be an inverse semigroup.
Then $S$ is a weak semilattice if and only if $\mathsf{B}(S)$ is a $\wedge$-semigroup.
\end{proposition}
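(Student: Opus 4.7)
The plan is to reduce to the case where $S$ is already distributive, by invoking Lemma~\ref{lem:lenin} together with Remark~\ref{rem:berk}. The latter implies, by the uniqueness of Booleanizations, that $\mathsf{B}(S) \cong \mathsf{B}(\mathsf{D}(S))$, so the proposition reduces to the statement: for a distributive inverse semigroup $S$, $S$ is a $\wedge$-semigroup if and only if $\mathsf{B}(S)$ is a $\wedge$-semigroup. Under this reduction I would prove the two directions directly.

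For the forward direction, suppose $S$ has all binary meets. A direct calculation with prime filters should yield the identity
$$V_{a;b} \cap V_{c;d} = V_{a \wedge c;\, ((a \wedge c) \wedge b) \vee ((a \wedge c) \wedge d)},$$
the right-hand side being well defined precisely because $S$ is a $\wedge$-semigroup (and the indicated join exists as the two elements lie below $a \wedge c$). Since intersection distributes over set-theoretic union and every element of $\mathsf{B}(S)$ is a finite compatible union of sets $V_{e;f}$, this delivers all binary meets in $\mathsf{B}(S)$.

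For the reverse direction, assume $\mathsf{B}(S)$ is a $\wedge$-semigroup. Given $a, b \in S$, set $Z = V_a \wedge V_b$ as computed in $\mathsf{B}(S)$. The natural partial order in $\mathsf{B}(S)$ is subset inclusion, so for every $c \leq a, b$ in $S$ we have $V_c \subseteq V_a \cap V_b$ and therefore $V_c \subseteq Z$. Combined with the directedness of prime filters, which gives $V_a \cap V_b = \bigcup_{c \leq a, b} V_c$, this forces $Z = V_a \cap V_b$. Writing $Z = \bigcup_{k=1}^{n} V_{e_k;f_k}$ as a finite compatible union, I would apply compactness of each $V_{e_k}$ from Proposition~\ref{prop:key}(4) to the open cover $V_{e_k} \subseteq V_{f_k} \cup \bigcup_{c \leq a, b} V_c$, extract a finite subcover, and deduce that $V_{e_k;f_k}$ is contained in a finite union of sets $V_c$ with $c \leq a, b$. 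Collecting these over all $k$ produces a finite compatible set $\{c_1, \ldots, c_m\}$ of elements below both $a$ and $b$ satisfying $V_a \cap V_b = V_{c^*}$, where $c^* = \bigvee_{i} c_i$ exists in the distributive semigroup $S$. Proposition~\ref{prop:key}(2) then identifies $c^*$ as the meet $a \wedge b$ in $S$.

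The main obstacle will lie in the reverse direction. It rests on the two observations that the abstract meet $V_a \wedge V_b$ in $\mathsf{B}(S)$ must in fact coincide with the set-theoretic intersection $V_a \cap V_b$, and that compactness of the basic opens $V_{e_k}$ lets one upgrade a covering of $Z$ by sets of the form $V_{e;f}$ to a covering by sets $V_c$ indexed by elements $c \leq a, b$. The forward direction, by comparison, amounts to little more than a prime-filter calculation together with distributivity of intersection over union.
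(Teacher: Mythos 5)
Your proposal is correct and follows essentially the same route as the paper: the same reduction to the distributive case via Lemma~\ref{lem:lenin}, the same explicit formula $V_{a;b} \cap V_{c;d} = V_{(a \wedge c);((b\wedge c) \vee (a \wedge d))}$ for the forward direction (your expression agrees with this since $b \leq a$ and $d \leq c$), and the same strategy of writing $V_{a}\cap V_{b}$ as $V_{c^{*}}$ with $c^{*}$ a finite join of elements below $a$ and $b$ for the converse, where your direct compactness argument substitutes for the paper's appeal to Lemma~\ref{lem:valery}. Your explicit verification that the abstract meet in $\mathsf{B}(S)$ coincides with the set-theoretic intersection $V_{a}\cap V_{b}$ is a worthwhile addition, as the paper takes this for granted.
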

\begin{proof} By Lemma~\ref{lem:lenin}, we may assume that $S$ is distributive.
Thus we need to prove that $S$ is a $\wedge$-semigroup if and only if $\mathsf{B}(S)$ is a $\wedge$-semigroup.
Observe that to prove $\mathsf{B}(S)$ is a $\wedge$-semigroup, we need only consider intersections of the form $V_{a;b} \cap V_{c;d}$.
Suppose that $S$ is a $\wedge$-semigroup.
Then it is routine to check that 
$$V_{a;b} \cap V_{c;d} = V_{(a \wedge c);((b \wedge c) \vee (a \wedge d))}$$
and makes sense.
Conversely, suppose that $V_{a;b} \cap V_{c;d}$ is an element of $\mathsf{B}(S)$.
Then $V_{a;b} \cap V_{c;d} = \bigcup_{i=1}^{m} V_{x_{i};y_{i}}$.
By Lemma~\ref{lem:valery}, we may assume that 
$b \leq y_{i} \leq x_{i} \leq a$
and 
$d \leq y_{i} \leq x_{i} \leq c$
for $1 \leq i \leq m$.
Let $a,b \in S$.
Then by the above
$V_{a} \cap V_{b} = \bigcup_{i=1}^{m} V_{x_{i};y_{i}}$.
Put $c = \bigvee_{i=1}^{m}x_{i}$, well-defined since $x_{i} \leq a,b$ and so $\{x_{1}, \ldots, x_{n}\}$ is a compatible subset.
We claim that $V_{a} \cap V_{b} = V_{c}$.
To prove this, let $a,b \in P$, a prime filter. 
Then $x_{i} \in P$ for some $i$.
Thus $c \in P$.
Conversely, suppose that $c \in P$, a prime filter.
Then $x_{i} \in P$ for some $i$.
Thus $a,b \in P$.
We finish off by proving that $a \wedge b$ exists and equals $c$.
Clearly, $c \leq a,b$.
Let $d \leq a,b$.
Then $V_{d} \subseteq V_{a},V_{b}$.
Thus $V_{d} \subseteq V_{c}$.
It follows by part (2) of Proposition~\ref{prop:key} that $d \leq c$.
We have therefore proved that all binary meets exist.
\end{proof}

\subsection{Paterson's universal groupoid}

In this section, we shall complete the description of the connection between the Booleanization of an inverse semigroup 
and Paterson's universal groupoid first discussed in \cite[Section 5.1]{LL}.
Paterson described his universal groupoid in \cite[Sections~4.3, 4,4]{Paterson},
the significant part of this being \cite[Proposition 4.4.2]{Paterson}
where he proves that the inverse semigroup he denotes by $S''$ is isomorphic to the Boolean inverse semigroup associated with his universal groupoid.
There are two steps in constructing $S''$.
The first is the construction of what he calls $S'$ \cite[Page 176]{Paterson}
which corresponds to the inverse semigroup $\mathsf{V}(S)$ defined prior to Lemma~\ref{lem:fillon};
our semigroup appears simpler than the one described by Paterson because we are able to assume that our source inverse semigroup is distributive.
The second is the construction of the semigroup $S''$, defined in \cite[page~190]{Paterson}, which is the semigroup $\mathsf{V}(S)^{\vee} = \mathsf{B}(S)$.
Observe that the construction of Paterson's groupoid has been increasingly viewed from a more algebraic perspective \cite{Kellendonk1, Kellendonk2, Lenz, LMS}.

Let $S$ be an inverse semigroup with zero.
We know that $\mathsf{B}(S) \cong \mathsf{B}(\mathsf{D}(S))$.
The first step, therefore, is to relate the prime filters in $\mathsf{D}(S)$ to, what will turn out to be, the filters in $S$.

\begin{lemma}\label{lem:stasi} Let $S$ be an inverse semigroup with zero.
Then there is an order isomorphism between the proper filters of $S$ and the prime filters of $\mathsf{D}(S)$.
\end{lemma}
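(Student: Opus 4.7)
The plan is to set up a pair of inverse order-preserving maps between proper filters of $S$ and prime filters of $\mathsf{D}(S)$, explicitly
\[
\Phi(F) = \{A \in \mathsf{D}(S) : A \cap F \neq \varnothing\}
\quad\text{and}\quad
\Psi(\mathcal{F}) = \{s \in S : s^{\downarrow} \in \mathcal{F}\},
\]
and then verify they are mutually inverse bijections that respect the inclusion order. Note that once the bijection is in place, order preservation in both directions is immediate from the definitions, so essentially all the content lies in showing that $\Phi$ and $\Psi$ land in the right targets and that the compositions are identities.

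The first step is to check that $\Phi(F)$ is a prime filter when $F$ is a proper filter. Upward closure with respect to subset inclusion in $\mathsf{D}(S)$ is clear. For downward directedness, given $A,B \in \Phi(F)$ with witnesses $a \in A \cap F$ and $b \in B \cap F$, the filter property of $F$ produces $c \in F$ with $c \leq a,b$; then $c^{\downarrow} \in \mathsf{D}(S)$ lies inside $A \cap B$ and meets $F$. Properness follows because $\{0\} \cap F = \varnothing$, and primeness follows because joins of compatible elements of $\mathsf{D}(S)$ are unions: if $(A \cup B) \cap F \neq \varnothing$ then one of $A \cap F$ or $B \cap F$ is non-empty.

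The second step is to check that $\Psi(\mathcal{F})$ is a proper filter in $S$, and this is the step I expect to require the most care since it is the one place where primeness of $\mathcal{F}$ is essential. Upward closure is trivial from $s \leq t \Rightarrow s^{\downarrow} \subseteq t^{\downarrow}$ together with upward closure of $\mathcal{F}$. For downward directedness, given $s,t \in \Psi(\mathcal{F})$ the filter $\mathcal{F}$ yields some $A \in \mathcal{F}$ with $A \subseteq s^{\downarrow} \cap t^{\downarrow}$; writing $A = \{a_{1}, \ldots, a_{n}\}^{\downarrow}$ and noting that $A = \bigvee_{i=1}^{n} a_{i}^{\downarrow}$ as a compatible join in $\mathsf{D}(S)$, primeness of $\mathcal{F}$ forces some $a_{i}^{\downarrow} \in \mathcal{F}$, and this $a_{i}$ witnesses downward directedness. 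Properness of $\Psi(\mathcal{F})$ is the translation of $\{0\} \notin \mathcal{F}$.

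The final step is to verify $\Psi \circ \Phi = \mathrm{id}$ and $\Phi \circ \Psi = \mathrm{id}$. One direction is a one-line unfolding: $\Psi(\Phi(F)) = \{s : s^{\downarrow} \cap F \neq \varnothing\}$, which equals $F^{\uparrow} = F$. The other direction $\Phi(\Psi(\mathcal{F})) = \mathcal{F}$ again needs primeness: if $A \in \mathcal{F}$ then writing $A = \bigvee_{i} a_{i}^{\downarrow}$ gives some $a_{i}^{\downarrow} \in \mathcal{F}$, hence $a_{i} \in \Psi(\mathcal{F}) \cap A$, while the reverse inclusion follows at once from upward closure of $\mathcal{F}$. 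Order preservation in both directions is then clear because containment of filters on either side transparently transfers across both $\Phi$ and $\Psi$.
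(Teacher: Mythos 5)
Your proposal is correct and follows essentially the same route as the paper's own proof: the maps $\Phi$ and $\Psi$ are exactly the paper's $F \mapsto F^{u}$ and $P \mapsto P^{d}$, and each verification step (primeness of $\Phi(F)$ via joins being unions, directedness of $\Psi(\mathcal{F})$ via writing $A = \bigvee_{i} a_{i}^{\downarrow}$ and invoking primeness, and the two composite identities) matches the paper's argument. Nothing to add.
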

\begin{proof}
Let $F$ be a proper filter in $S$.
Define
$$F^{u} = \{ A \in \mathsf{D}(S) \colon A \cap F \neq \varnothing \}.$$
We prove that $F^{u}$ is a prime filter in $\mathsf{D}(S)$.
Let $A,B \in F^{u}$.
Then $a \in A \cap F$ and $b \in B \cap F$ for some $a$ and some $b$.
In particular, $a,b \in F$ and so, by assumption, there is a non-zero element $c \in F$ such that
$c \leq a,b$.
Now $c^{\downarrow} \in \mathsf{D}(S)$ and $c^{\downarrow} \subseteq A,B$,
since both $A$ and $B$ are order ideals.
It is clear that $F^{u}$ is closed upwards.
Thus $F^{u}$ is a proper filter (proper since it cannot contain $0^{\downarrow}$).
It remains to show that it is prime.
Let $A \cup B \in F^{u}$, where $A$ and $B$ are compatible elements of $\mathsf{D}(S)$.
Then $F \cap (A \cup B) \neq \varnothing$.
It follows that $F \cap A \neq \varnothing$ or $B \cap B \neq \varnothing$ and so
$A \in F^{u}$ or $B \in F^{u}$, as required.

Let $F_{1}$ and $F_{2}$ be proper filters of $S$ such that $F_{1} \subseteq F_{2}$.
Then it is immediate that $F_{1}^{u} \subseteq F_{2}^{u}$.

Let $P$ be a prime filter in $\mathsf{D}(S)$.
Define
$$P^{d} = \{s \in S \colon s^{\downarrow} \in P\}.$$
We prove that $P^{d}$ is a proper filter in $S$.
Observe first that since $P$ is a prime filter $0^{\downarrow} \notin P$.
Thus $0 \notin P^{d}$.
Let $s,t \in P^{d}$.
Then $s^{\downarrow},t^{\downarrow} \in P$.
But $P$ is a prime filter and so there is an element $A \in P$ such that $A \subseteq s^{\downarrow},t^{\downarrow}$.
Now if $A = \{a_{1},\ldots, a_{m}\}^{\downarrow}$ then $A = \bigvee_{i=1}^{m} a_{i}^{\downarrow}$.
But $P$ is a prime filter and so $a_{i}^{\downarrow} \in P$ for some $i$.
But then $a_{i} \in P^{d}$ and $a_{i} \leq s,t$.
Let $s \in P^{d}$ and $s \leq t$.
Then $s^{\downarrow} \in P$ but $s^{\downarrow} \subseteq t^{\downarrow}$ and so $t^{\downarrow} \in P$ giving $t \in P^{d}$.
We have therefore proved that $P^{d}$ is a proper filter.

Let $P_{1}$ and $P_{2}$ be prime filters in  $\mathsf{D}(S)$ such that $P_{1} \subseteq P_{2}$.
Then it is immediate that $P_{1}^{d} \subseteq P_{2}^{d}$.

It remains to iterate these two constructions.
Let $F$ be a proper filter in $S$.
We prove that $(F^{u})^{d} = F$.
Observe that
$$s \in (F^{u})^{d} \Leftrightarrow s^{\downarrow} \in F^{u} \Leftrightarrow s^{\downarrow} \cap F \neq \varnothing \Leftrightarrow s \in F,$$ 
where we use the fact that $F$ is a filter.
Let $P$ be a prime filter in $\mathsf{D}(S)$.
We prove that $(P^{d})^{u} = P$.
Observe that
$$A \in (P^{d})^{u} \Leftrightarrow A \cap P^{d} \neq \varnothing \Leftrightarrow s^{\downarrow} \subseteq A \mbox{ for some } s^{\downarrow} \in P
\Leftrightarrow A \in P.$$
\end{proof}

Let $S$ be an inverse semigroup with zero.
Denote by $\mathcal{L}(S)$ the set of all proper filters of $S$.
This set becomes a groupoid in the following way.
If $A$ is a proper filter define $\mathbf{d}(A) = (A^{-1}A)^{\uparrow}$ and $\mathbf{r}(A) = (AA^{-1})^{\uparrow}$;
both $\mathbf{d}(A)$ and $\mathbf{r}(A)$ are proper filters.
Define $A \cdot B = (AB)^{\uparrow}$ if and only if $\mathbf{d}(A) = \mathbf{r}(A)$.
Then with this partial binary operation, the set $\mathcal{L}(S)$ is a groupoid.
The identities are precisely the proper filters that contain idempotents.
The order isomorphism of Lemma~\ref{lem:stasi} can be extended to an isomorphism of groupoids.

\begin{lemma}\label{lem:kgb} Let $S$ be an inverse semigroup with zero.
Then $F \mapsto F^{u}$ defines a functor from $\mathcal{L}(S)$ to $\mathsf{G}(\mathsf{D}(S))$,
$P \mapsto P^{d}$ defines a functor from $\mathsf{G}(\mathsf{D}(S))$ to $\mathcal{L}(S)$
and these functors are mutually inverse.
\end{lemma}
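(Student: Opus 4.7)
The plan is to leverage Lemma~\ref{lem:stasi}, which already gives mutually inverse order bijections between $\mathcal{L}(S)$ and $\mathsf{G}(\mathsf{D}(S))$, and upgrade these to a groupoid isomorphism by showing $F \mapsto F^{u}$ respects the groupoid structure. Once functoriality of $F \mapsto F^{u}$ is in hand, the inverse map $P \mapsto P^{d}$ is automatically functorial, being the set-theoretic inverse of a bijective groupoid functor.

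The verifications fall into three parts: $(a)$ $\mathbf{d}(F^{u}) = \mathbf{d}(F)^{u}$ and symmetrically for $\mathbf{r}$; $(b)$ if $\mathbf{d}(F) = \mathbf{r}(G)$ then $(F \cdot G)^{u} = F^{u} \cdot G^{u}$; $(c)$ identities map to identities. For $(a)$, the key observation is that $A \cap \mathbf{d}(F) \neq \varnothing$ iff there exist $f, g \in F$ and $s \in A$ with $s \geq f^{-1}g$, and this is equivalent to $A$ containing $(f^{\downarrow})^{-1}(g^{\downarrow}) = (f^{-1}g)^{\downarrow}$ for some $f^{\downarrow}, g^{\downarrow} \in F^{u}$, which in turn is equivalent to $A \in \mathbf{d}(F^{u})$. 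Claim $(b)$ is proved by the same style of generator argument, reducing to $f \in F$, $g \in G$ and using $f^{\downarrow} \cdot g^{\downarrow} = (fg)^{\downarrow}$ in $\mathsf{D}(S)$. Claim $(c)$ follows from $(a)$ together with injectivity of the bijection, since identities in both groupoids are precisely the fixed points of $\mathbf{d}$.

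The main obstacle, such as it is, amounts to careful bookkeeping: translating between filter operations on $S$ (subset products followed by upward closure in $S$) and groupoid operations on $\mathsf{G}(\mathsf{D}(S))$ (subset products of finitely generated compatible order ideals followed by upward closure in $\mathsf{D}(S)$). The translation goes smoothly because principal order ideals behave functorially under the embedding $\delta$: one has $(s^{\downarrow})^{-1} = (s^{-1})^{\downarrow}$ and $s^{\downarrow} \cdot t^{\downarrow} = (st)^{\downarrow}$, which lets us reduce every set-theoretic test on an element of $\mathsf{D}(S)$ to a test on a generator in $S$. Primality of prime filters in $\mathsf{D}(S)$ enters only through its already-established consequence in Lemma~\ref{lem:stasi}, namely that $s \in P^{d}$ iff $s^{\downarrow} \in P$, so what remains is essentially an exercise in unwinding definitions.
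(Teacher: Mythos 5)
Your proposal is correct and follows essentially the same route as the paper: both upgrade the order bijection of Lemma~\ref{lem:stasi} to a groupoid isomorphism by checking compatibility with $\mathbf{d}$, $\mathbf{r}$ and the partial product, reducing every test on elements of $\mathsf{D}(S)$ to generators via $(s^{\downarrow})^{-1} = (s^{-1})^{\downarrow}$ and $s^{\downarrow}t^{\downarrow} = (st)^{\downarrow}$. You in fact verify more than the paper records explicitly --- its proof only writes out the check that identity filters map to identity prime filters (done directly by exhibiting an idempotent of $F^{u}$, rather than, as you do, via $\mathbf{d}(F^{u}) = \mathbf{d}(F)^{u} = F^{u}$) and leaves the remaining functoriality conditions, which you supply, as routine.
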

\begin{proof} We prove that $F \mapsto F^{u}$ is a functor.
If $F$ is an identity filter then every element of $F$ lies above an idempotent of $F$.
We prove that $F^{u}$ contains an identity which is enough to show that $F^{u}$ is an identity.
Let $A \in F^{u}$.
Then $A \cap F \neq \varnothing$.
Then there exists $a \in A$ and $a \in F$.
But $A$ is an order ideal and $a$ is above an idempotent, $e$ say, in $F$.
Thus $e \in A$ and $e \in F$.
It follows that $A \in F^{u}$ contains an idempotent and so is an identity.
\end{proof}

Let $S$ be an inverse semigroup with zero.
For each $a \in S$ define $U_{a}$ to be the set of all proper filters that contain $a$.
Put $\sigma = \{U_{a} \colon a \in S \}$.

\begin{lemma}\label{lem:hill} Let $S$ be an inverse semigroup.
\begin{enumerate}
\item $U_{0} = \varnothing$. 
\item $U_{a} = U_{b}$ if and only if $a = b$.
\item $U_{a}^{-1} = U_{a^{-1}}$.
\item $U_{a}U_{b} = U_{ab}$.
\item $U_{a}$ is a partial bisection.
\item $U_{a} \cap U_{b} = \bigcup_{x \leq a,b} U_{x}$.
\end{enumerate}
\end{lemma}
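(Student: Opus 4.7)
The plan is to prove the six items in sequence, leveraging the groupoid structure on $\mathcal{L}(S)$ established above and basic inverse-semigroup identities. Items (1), (2), (3), (5), and (6) are essentially formal; (4) is where the work lies.

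Item (1) follows directly from the definition of proper filter. Item (3) holds because $F \mapsto F^{-1}$ is an involution on $\mathcal{L}(S)$ sending $U_a$ bijectively onto $U_{a^{-1}}$. For (2), one uses the principal filter $a^{\uparrow}$, which is a proper filter containing $a$ whenever $a \neq 0$ (the case $a = 0$ is handled via (1)); so $U_a = U_b$ forces $b \in a^{\uparrow}$, i.e. $a \leq b$, and symmetrically $b \leq a$. Item (6) is a restatement of downward-directedness of filters: $F$ contains both $a$ and $b$ iff $F$ contains some common lower bound $x \leq a,b$. Item (5) combines (3) and (4): $U_a^{-1}U_a = U_{a^{-1}a}$, and any proper filter containing the idempotent $a^{-1}a$ is an identity of $\mathcal{L}(S)$, so $U_a$ is a partial bisection.

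The heart of the argument is (4). The inclusion $U_a U_b \subseteq U_{ab}$ is immediate, since $ab \in AB$ whenever $a \in A$, $b \in B$ and $A \cdot B$ is defined. For the reverse inclusion, given $C \in U_{ab}$, I will exhibit a factorisation $C = A \cdot B$ with $a \in A$ and $b \in B$ by taking
\[
A := (C b^{-1})^{\uparrow}, \qquad B := (a^{-1} C)^{\uparrow}.
\]
That these sets are filters, and that $a \in A$ and $b \in B$, follow from the relations $abb^{-1} \leq a$ and $a^{-1}ab \leq b$ together with the downward-directedness of $C$. Properness is the first subtlety: if $cb^{-1} = 0$ for some $c \in C$, one replaces $c$ by a common lower bound of $c$ and $ab$ in $C$, so that $c = abe$ for an idempotent $e \leq b^{-1}a^{-1}ab \leq b^{-1}b$; then $cb^{-1}b = c$, whence $c = 0$, contradicting properness of $C$. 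Properness of $B$ is symmetric.

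The remaining and most delicate verifications are that $\mathbf{d}(A) = \mathbf{r}(B)$ and that $A \cdot B = C$. Both equalities follow by computing $A^{-1}A$, $BB^{-1}$, and $(AB)^{\uparrow}$ using the identity $C = (ab \, \mathbf{d}(C))^{\uparrow}$ and standard inverse-semigroup manipulations, exploiting at each step that $(ab)^{-1}(ab) = b^{-1}a^{-1}ab$ belongs to $\mathbf{d}(C)$. I expect this step to be the main technical obstacle, though the computations are ultimately routine once the candidate filters are in hand; once (4) is proved, (5) is then obtained as indicated above.
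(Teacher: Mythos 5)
Your proposal is correct and follows essentially the same route as the paper: parts (1)--(3), (5), (6) are dispatched formally, and (4) is proved by exhibiting an explicit factorisation $C = A \cdot B$ with $A \in U_{a}$, $B \in U_{b}$. Your candidate filters $A = (Cb^{-1})^{\uparrow}$ and $B = (a^{-1}C)^{\uparrow}$ in fact coincide with the paper's $B = (b\mathbf{d}(C))^{\uparrow}$ and $A = (a\mathbf{r}(B))^{\uparrow}$, and the verifications you defer ($\mathbf{d}(A) = \mathbf{r}(B)$ and $A \cdot B = C$) do go through by the routine computation with generators $abe$, $e \in \mathbf{d}(C)$, that you indicate.
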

\begin{proof} (1) Immediate.
(2) This follows by the fact that $a^{\uparrow} \in U_{a}$.
(3) Straightfoward.
(4) Let $a \in A$ and $b \in B$, where $A$ and $B$ are filters.
Then from the definition of $A \cdot B$ we have that $ab \in A \cdot B$.
It follows that $U_{a}U_{b} \subseteq U_{ab}$.
To prove the reverse inclusion, let $ab \in C$, a filter.
Since $\mathbf{d}(ab) \leq \mathbf{d}(b)$ and so $B = (b\mathbf{d}(C))^{\uparrow}$ is a well-defined filter.
Since $b\mathbf{d}(ab) \in B$ we have that $\mathbf{d}(a)\mathbf{r}(b) \in \mathbf{r}(B)$.
It follows that $\mathbf{d}(a) \in \mathbf{r}(B)$ and so $A = (a\mathbf{r}(B))^{\uparrow}$ is a well-defined filter.
Clearly, $A \in U_{a}$, $B \in U_{b}$ and $C = A \cdot B$.
(5) This is immediate from the standard properties of filters.
(6) Straightfoward.\end{proof}

We therefore have an injective homomorphism $\upsilon \colon S \rightarrow \mathsf{L}(\mathcal{L}(S))$.
The identities of $\mathcal{L}(S)$ are the filters that contain idempotents (which is equivalent to saying that the filter is an inverse subsemigroup).
Let $a \in S$ and $a_{1}, \ldots, a_{m}$.
Define 
$$U_{a;a_{1}, \ldots, a_{m}} = U_{a} \cap U_{a_{1}}^{c} \cap \ldots \cap U_{a_{m}}^{c}.$$
Clearly, $U_{a;a_{1}, \ldots, a_{m}}$ is a partial bisection and so an element of $\mathsf{L}(\mathcal{L}(S))$.
The proof of the following is immediate.

\begin{lemma} Let $S$ be an inverse semigroup.
Then 
$$U_{a;a_{1}, \ldots, a_{m}} = U_{a;a_{1}} \cap \ldots \cap U_{a;a_{m}}$$
where the intersection is a compatible meet in the inverse semigroup since $U_{a;a_{i}} \subseteq U_{a}$.
\end{lemma}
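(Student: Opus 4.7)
The plan is to argue that the claimed equality is essentially a set-theoretic tautology, dressed up in the language of the pseudogroup $\mathsf{L}(\mathcal{L}(S))$. Starting from the definition
\[
U_{a;a_1,\ldots,a_m} = U_a \cap U_{a_1}^c \cap \cdots \cap U_{a_m}^c,
\]
I would use the idempotency $U_a \cap U_a = U_a$ to duplicate the factor $U_a$ across the intersection, rewriting the right-hand side as
\[
\bigcap_{i=1}^m (U_a \cap U_{a_i}^c).
\]
The special case of the definition with a single exclusion gives $U_{a;a_i} = U_a \cap U_{a_i}^c$, so this is literally $\bigcap_{i=1}^m U_{a;a_i}$, which yields the claimed equality as sets.

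Second, I would justify the parenthetical claim that this set-theoretic intersection is indeed a compatible meet inside the inverse semigroup $\mathsf{L}(\mathcal{L}(S))$. By Proposition~\ref{prop:lb}, $\mathsf{L}(\mathcal{L}(S))$ is a pseudogroup in which the natural partial order is just subset inclusion. Each $U_{a;a_i}$ is, by construction, a subset of the partial bisection $U_a$, hence $U_{a;a_i} \leq U_a$ for every $i$. Thus $\{U_{a;a_1}, \ldots, U_{a;a_m}\} \subseteq U_a^{\downarrow}$, and by part (3) of Lemma~\ref{lem:macron} this set is compatible. In a pseudogroup, the compatible meet of elements lying below a common element coincides with their set-theoretic intersection, because set intersection produces a subset of each (and hence a lower bound) and any common lower bound is a subset of each, hence of the intersection. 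Therefore $\bigcap_{i=1}^m U_{a;a_i}$ is the compatible meet $\bigwedge_{i=1}^m U_{a;a_i}$ in $\mathsf{L}(\mathcal{L}(S))$.

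The main obstacle is essentially nil: the content of the lemma is the bookkeeping observation that the single intersection $U_a \cap U_{a_1}^c \cap \cdots \cap U_{a_m}^c$ admits a natural expression as an iterated compatible meet, an expression useful for applying homomorphism-preservation results for compatible meets (cf.\ Remark~\ref{rem:lepen}). No inverse-semigroup calculation is needed beyond the two observations above.
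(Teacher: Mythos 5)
Your proof is correct and matches the paper's intent: the paper simply declares this lemma immediate, and your argument is exactly the spelled-out version — the set-theoretic identity $U_a \cap U_{a_1}^c \cap \cdots \cap U_{a_m}^c = \bigcap_{i=1}^m (U_a \cap U_{a_i}^c)$ together with the observation that, since each $U_{a;a_i}$ lies below $U_a$ in $\mathsf{L}(\mathcal{L}(S))$ (ordered by inclusion), the elements are pairwise compatible and their meet is the intersection. Nothing further is needed.
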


Define $\Omega$ to be the set of all sets of the form $U_{a;a_{1}, \ldots, a_{m}}$.
It is easy to check that $\sigma$ is a basis for a topology on $\mathcal{L}(S)$.
In fact, using arguments similar to those in \cite[Lemma~2.7, Proposition~2.8]{LL}, it follows that $\mathcal{L}(S)$ is an \'etale topological groupoid.

If $S$ is now a distributive inverse semigroup then the set $\pi = \{V_{a} \colon a \in S\}$ forms the basis of a topology for $\mathsf{G}(S)$
which makes it an \'etale topological groupoid.
The functor of Lemma~\ref{lem:kgb} is actually continuous
and so links the filter topology constructed from $S$ with the prime filter topology constructed from $\mathsf{D}(S)$.

\begin{proposition}\label{prop:tosh} Let $S$ be an inverse semigroup with zero.
Then the \'etale topological groupoids $\mathcal{L}(S)$ and $\mathsf{G}(\mathsf{D}(S))$ are isomorphic.
\end{proposition}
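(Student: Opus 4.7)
The plan is to bolt together the order isomorphism of Lemma~\ref{lem:stasi} with the functoriality of Lemma~\ref{lem:kgb} and then verify that the resulting bijection is a homeomorphism with respect to the two basic topologies. Concretely, write $\Phi \colon \mathcal{L}(S) \to \mathsf{G}(\mathsf{D}(S))$, $F \mapsto F^{u}$, with inverse $\Psi \colon P \mapsto P^{d}$. By Lemma~\ref{lem:stasi} these are mutually inverse bijections, and by Lemma~\ref{lem:kgb} they are functors between the underlying discrete groupoids, so the only thing left to check is continuity in each direction.

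For continuity of $\Phi$, I would take a basic open set $V_{A} \subseteq \mathsf{G}(\mathsf{D}(S))$ where $A \in \mathsf{D}(S)$, say $A = \{a_{1}, \ldots, a_{n}\}^{\downarrow}$. Unwinding the definition, $\Phi^{-1}(V_{A}) = \{F \in \mathcal{L}(S) : A \in F^{u}\} = \{F : A \cap F \neq \varnothing\}$. Because $F$ is upward closed and $A$ is a downward-closed finite-compatible-join of the $a_{i}^{\downarrow}$, the intersection $A \cap F$ is non-empty precisely when some $a_{i}$ lies in $F$, so
\[
\Phi^{-1}(V_{A}) = \bigcup_{i=1}^{n} U_{a_{i}},
\]
which is open in $\mathcal{L}(S)$.

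For continuity of $\Psi$, I would take a basic open $U_{a} \subseteq \mathcal{L}(S)$. Then $\Psi^{-1}(U_{a}) = \{P \in \mathsf{G}(\mathsf{D}(S)) : a \in P^{d}\} = \{P : a^{\downarrow} \in P\} = V_{a^{\downarrow}}$, which is a basic open set of $\mathsf{G}(\mathsf{D}(S))$. Hence $\Psi$ is continuous.

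No step is really an obstacle once Lemmas~\ref{lem:stasi} and \ref{lem:kgb} are in hand; the only mildly subtle point is the observation that a filter $F$ meets a finitely generated order ideal $\{a_{1},\ldots,a_{n}\}^{\downarrow}$ exactly when one of the $a_{i}$ already lies in $F$, which is what lets the preimage under $\Phi$ be expressed as a \emph{finite} union of basic sets. Putting everything together, $\Phi$ and $\Psi$ are mutually inverse continuous groupoid functors, so they give an isomorphism of étale topological groupoids between $\mathcal{L}(S)$ and $\mathsf{G}(\mathsf{D}(S))$.
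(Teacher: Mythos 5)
Your proposal is correct and follows essentially the same route as the paper: the paper's proof simply invokes the groupoid isomorphism $F \mapsto F^{u}$ from Lemma~\ref{lem:kgb} and observes that the basic open set $U_{s}$ corresponds to the basic open set $V_{s^{\downarrow}}$ in both directions. Your version merely spells out the continuity check in slightly more detail, handling an arbitrary basic open $V_{A}$ with $A = \{a_{1},\ldots,a_{n}\}^{\downarrow}$ via the (correct) observation that $A \cap F \neq \varnothing$ exactly when some $a_{i} \in F$.
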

\begin{proof} The basic open set $V_{s}$ is mapped to the basic open set $V_{s^{\downarrow}}$ 
and the inverse image of $V_{s^{\downarrow}}$ is $V_{s}$.
\end{proof}

Recall that a {\em (locally compact) Boolean space} is a $0$-dimensional, locally compact Hausdorff space.
An \'etale topological groupoid is said to be {\em Boolean} if its identity space is a Boolean space.
The topologies on the groupoids  $\mathcal{L}(S)$ and $\mathsf{G}(\mathsf{D}(S))$ now have to be refined in order that both groupoids become Boolean groupoids.

Let $\mathbf{s},\mathbf{t} \in \mathsf{D}(S)$ where $\mathbf{t} \leq \mathbf{s}$ in $\mathsf{D}(S)$.
We shall describe the set $V_{\mathbf{s};\mathbf{t}}$ explicitly.
Suppose that $\mathbf{s} = \{s_{1}, \ldots, s_{m} \}^{\downarrow}$ and $\mathbf{t} = \{t_{1},\ldots, t_{n} \}^{\downarrow}$.
In $\mathsf{D}(S)$, we have that $\mathbf{s} = \bigvee_{i=1}^{m} s_{i}^{\downarrow}$
and $\mathbf{t} = \bigvee_{j=1}^{n} t_{j}^{\downarrow}$.
By repeated application of part (3) of Lemma~\ref{lem:benn},
we may restrict out attention to the sets of the form $V_{s^{\downarrow};t_{1}^{\downarrow} \vee \ldots \vee t_{n}^{\downarrow}}$.

We now describe Paterson's universal groupoid\footnote{We have modified Paterson's construction in the obvious way to deal with the case where the inverse semigroup has a zero.} 
$\mathsf{G}_{u}(S)$ of the inverse semigroup $S$.
The underlying groupoid is still $\mathcal{L}(S)$ but a different topology is defined using $\Omega$ as a basis.
With respect to the topology with basis $\Omega$,  
the groupoid is called the {\em universal groupoid} and is denoted by means of $\mathsf{G}_{u}(S)$ where `u' stands for `universal'.

Define the topological groupoid  $\mathsf{G}(\mathsf{D}(S))^{\dagger}$ to have the same underlying groupoid as  $\mathsf{G}(\mathsf{D}(S))$
but with the topology having the basis the sets $V_{s;t}$. We call this the {\em patch topology}.\footnote{Compare with \cite{J}.}

\begin{proposition}\label{prop:soggy} Let $S$ be an inverse semigroup with zero.
The universal groupoid $\mathsf{G}_{u}(S)$ is homeomorphic to $\mathsf{G}(\mathsf{D}(S))^{\dagger}$.
\end{proposition}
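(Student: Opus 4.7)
The plan is to leverage Proposition~\ref{prop:tosh}: the map $F \mapsto F^{u}$ is already a bijection of the underlying (discrete) groupoids $\mathcal{L}(S)$ and $\mathsf{G}(\mathsf{D}(S))$, so the proposition reduces to showing that under this bijection the $\Omega$-topology of $\mathsf{G}_{u}(S)$ agrees with the patch topology of $\mathsf{G}(\mathsf{D}(S))^{\dagger}$. I would establish this by putting the two stated bases into explicit correspondence.

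The basic translation is immediate: for $a \in S$ and a proper filter $F$ of $S$, the set $a^{\downarrow} \cap F$ is non-empty if and only if $a \in F$ (by upward closure of $F$), so $F \in U_{a}$ iff $F^{u} \in V_{a^{\downarrow}}$ and $F \in U_{a_{i}}^{c}$ iff $F^{u} \notin V_{a_{i}^{\downarrow}}$. Hence $F \mapsto F^{u}$ sends $U_{a;a_{1},\ldots,a_{m}}$ to $V_{a^{\downarrow}} \setminus \bigcup_{i=1}^{m} V_{a_{i}^{\downarrow}}$, and conversely.

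To cast this in patch-basic form I would first normalize so that each $a_{i} \leq a$. If $a_{i} \not\sim a$, no filter can contain both $a$ and $a_{i}$, so that constraint is automatically satisfied and may be dropped. If $a_{i} \sim a$, set $a_{i}' = a \wedge a_{i} = a a_{i}^{-1} a_{i}$ by Lemma~\ref{lem:macron}; then the filter property (any two elements of $F$ have a common lower bound in $F$) combined with upward closure forces $U_{a;a_{i}} = U_{a;a_{i}'}$, with $a_{i}' \leq a$. Once every $a_{i} \leq a$ we have $\bigvee_{i} a_{i}^{\downarrow} \leq a^{\downarrow}$ in $\mathsf{D}(S)$, and by primality $\bigcup_{i} V_{a_{i}^{\downarrow}} = V_{\bigvee_{i} a_{i}^{\downarrow}}$; so the image of $U_{a;a_{1},\ldots,a_{m}}$ is precisely the patch-basic open set $V_{a^{\downarrow};\, a_{1}^{\downarrow} \vee \cdots \vee a_{m}^{\downarrow}}$.

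Conversely, by the paragraph preceding the proposition every patch-basic open can be reduced to the form $V_{s^{\downarrow};\, t_{1}^{\downarrow} \vee \cdots \vee t_{n}^{\downarrow}}$ with each $t_{j} \leq s$, and the same translation identifies this with $U_{s;t_{1},\ldots,t_{n}} \in \Omega$. Thus the groupoid bijection from Proposition~\ref{prop:tosh} carries each basis onto the other and is therefore a homeomorphism. The main obstacle is the normalization step: reducing an arbitrary $U_{a;a_{i}}$ to one with $a_{i} \leq a$ using only the filter property and Lemma~\ref{lem:macron}. Once that is in hand, the rest of the argument is a matter of tracking definitions.
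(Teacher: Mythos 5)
Your overall strategy is exactly the paper's: the proof given there consists of the single observation that, under the groupoid isomorphism of Proposition~\ref{prop:tosh}, the basic open set $U_{s;s_{1},\ldots,s_{m}}$ corresponds to the basic open set $V_{s^{\downarrow};\{s_{1},\ldots,s_{m}\}^{\downarrow}}$ and conversely. Your verification that $F \in U_{a}$ if and only if $F^{u} \in V_{a^{\downarrow}}$, and your use of primality to identify $\bigcup_{i} V_{a_{i}^{\downarrow}}$ with $V_{a_{1}^{\downarrow}\vee\cdots\vee a_{m}^{\downarrow}}$, are both correct and simply make that one-line observation explicit.

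The step you single out as ``the main obstacle'', however, contains a genuine error. You claim that if $a_{i} \not\sim a$ then no proper filter contains both $a$ and $a_{i}$. This is false: a proper filter is only required to be up-closed and down-directed, and it can contain incompatible elements. For instance, in a symmetric inverse monoid take $h = (1\mapsto 1)$, $f = \mathrm{id}_{\{1,2\}}$ and $g = (1\mapsto 1,\, 2\mapsto 3)$; then $f \not\sim g$, yet $h^{\uparrow}$ is a proper filter containing both. Consequently $U_{a;a_{i}} \neq U_{a}$ in that case, so the incompatible constraints cannot be dropped, and the reduction of a general $U_{a;a_{1},\ldots,a_{m}}$ to one with $a_{i} \leq a$ breaks down (indeed, without a weak-semilattice hypothesis such a set need not equal any normalized basic set at all). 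Fortunately the step is not needed: in the definition of $\Omega$ the elements $a_{1},\ldots,a_{m}$ are taken with $a_{i} \leq a$, as the statements and proofs of Lemma~\ref{lem:house} make clear and as in Paterson's original construction, so every basic set is already in the form your final paragraph handles. The other half of your normalization --- replacing $a_{i}$ by the compatible meet $a \wedge a_{i} = aa_{i}^{-1}a_{i}$ when $a_{i} \sim a$, using down-directedness and upward closure --- is correct; it is only the incompatible case that fails.
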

\begin{proof} We shall use the same groupoid isomorphism as in Proposition~\ref{prop:tosh} derived from Lemma~\ref{lem:kgb}.
Observe that the basic open set $U_{s;s_{1}, \ldots, s_{m}}$ is mapped to the basic open set $V_{s^{\downarrow};\{s_{1}, \ldots, s_{m}\}^{\downarrow}}$
and that the inverse image of $V_{s^{\downarrow};\{s_{1}, \ldots, s_{m}\}^{\downarrow}}$ is $U_{s;s_{1}, \ldots, s_{m}}$. 
\end{proof}

At this point, we apply non-commutative Stone duality \cite{LL}.
If $G$ is a Boolean groupoid then the set of all compact-open local bisections of $G$, denoted by $\mathsf{KB}(G)$, is a Boolean inverse semigroup.
The following theorem establishes the exact connection between the Booleanization of an inverse semigroup and its associated universal groupoid.
The bridge between the two being provided by Proposition~\ref{prop:soggy}.

\begin{theorem}\label{them:paterson} Let $S$ be an inverse semigroup with zero.
Then $\mathsf{B}(S) \cong \mathsf{KB}(\mathsf{G}_{u}(S))$.
\end{theorem}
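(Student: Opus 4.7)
The plan is to reduce to the distributive case and then identify both sides as subsets of $\mathsf{L}(\mathsf{G}(\mathsf{D}(S)))$. By construction $\mathsf{B}(S) = \mathsf{B}(\mathsf{D}(S))$, and Proposition~\ref{prop:soggy} gives $\mathsf{G}_{u}(S) \cong \mathsf{G}(\mathsf{D}(S))^{\dagger}$ as topological groupoids, hence $\mathsf{KB}(\mathsf{G}_{u}(S)) \cong \mathsf{KB}(\mathsf{G}(\mathsf{D}(S))^{\dagger})$. Write $D = \mathsf{D}(S)$. It suffices to show that the subsets $\mathsf{B}(D)$ (finite compatible unions of the sets $V_{s;t}$) and $\mathsf{KB}(\mathsf{G}(D)^{\dagger})$ of $\mathsf{L}(\mathsf{G}(D))$ coincide; once set-equality is established, agreement of the Boolean inverse semigroup structures is automatic, since both inherit their multiplication and joins from $\mathsf{L}(\mathsf{G}(D))$.

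For the inclusion $\mathsf{B}(D) \subseteq \mathsf{KB}(\mathsf{G}(D)^{\dagger})$, each generator $V_{s;t}$ is by construction a basic open set of the patch topology and is a partial bisection. To see that it is compact, I would use the homeomorphism $\mathbf{d} \colon V_{s} \to \mathcal{V}_{\mathbf{d}(s)}$ coming from the étale source map, under which $V_{s;t}$ corresponds to $\mathcal{V}_{\mathbf{d}(s); \mathbf{d}(t)}$. The identity space of $\mathsf{G}(D)^{\dagger}$ is the locally compact Boolean space dual to $\mathsf{B}(\mathsf{E}(D))$, the Booleanization of the distributive lattice $\mathsf{E}(D)$ (combining Proposition~\ref{prop:marx} with \cite[Proposition~II.4.5]{J}); each $\mathcal{V}_{e;f}$ is clopen in it and therefore compact-open. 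Finite compatible unions of compact-open partial bisections remain compact-open partial bisections, giving the inclusion.

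For the reverse inclusion, let $K \in \mathsf{KB}(\mathsf{G}(D)^{\dagger})$. The basis $\{V_{s;t}\}$ of the patch topology covers $K$, so by compactness $K = V_{s_{1};t_{1}} \cup \cdots \cup V_{s_{n};t_{n}}$ for finitely many basic sets. Since each $V_{s_{i};t_{i}}$ lies in the partial bisection $K$, the family $\{V_{s_{i};t_{i}}\}$ is pairwise compatible in $\mathsf{L}(\mathsf{G}(D))$, exhibiting $K$ as an element of $\mathsf{B}(D)$.

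The principal technical obstacle is the compactness assertion for $V_{s;t}$ in the patch topology; this is isolated in the identity space via the étale source map and then handled by Stone duality applied to the Boolean algebra $\mathsf{B}(\mathsf{E}(D))$. Everything else—the matching of algebraic structures on the two sides and the compactness argument for the reverse inclusion—is then routine.
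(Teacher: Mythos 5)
Your proposal is correct, and its skeleton is the one the paper intends: Proposition~\ref{prop:soggy} supplies the homeomorphism $\mathsf{G}_{u}(S) \cong \mathsf{G}(\mathsf{D}(S))^{\dagger}$, and the remaining work is to recognise $\mathsf{B}(S)=\mathsf{B}(\mathsf{D}(S))$ as the compact-open partial bisections of the patch topology. Where you differ is in how that last step is discharged: the paper simply invokes non-commutative Stone duality from \cite{LL} (i.e.\ the theorem that $T \cong \mathsf{KB}(\mathsf{G}(T))$ for a Boolean inverse semigroup $T$, applied after identifying $\mathsf{G}(\mathsf{D}(S))^{\dagger}$ with the Stone groupoid of $\mathsf{B}(S)$), whereas you verify the set-level equality $\mathsf{B}(\mathsf{D}(S)) = \mathsf{KB}(\mathsf{G}(\mathsf{D}(S))^{\dagger})$ inside $\mathsf{L}(\mathsf{G}(\mathsf{D}(S)))$ directly. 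Your route is more self-contained and makes explicit the two points the duality citation hides: compactness of each $V_{s;t}$ in the patch topology, which you correctly localise to the identity space via $\mathbf{d}$ and settle by classical Stone duality for $\mathsf{B}(\mathsf{E}(\mathsf{D}(S)))$, and the finite-subcover argument for the reverse inclusion. The price is that you do not get the full duality statement (the contravariant equivalence of categories) as a by-product, only this one isomorphism. One small point worth making explicit in the reverse inclusion: since $K$ is open you should choose the covering family to consist of basic sets $V_{s_{i};t_{i}}$ \emph{contained in} $K$ before extracting the finite subcover; this is exactly what guarantees the pairwise compatibility you use, and your wording (``each $V_{s_{i};t_{i}}$ lies in the partial bisection $K$'') shows you have this in mind, but an arbitrary basic cover of $K$ would not do.
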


\subsection{A direct construction of the Booleanization}

Our goal now is to give a direct construction of the Booleanization of an inverse semigroup with zero $S$.
Of course, our preceding calculations do just this but mediated by the distributive completion of $S$.
In what follows, we shall not use this completion at all, but instead use the constructions described in \cite{Lenz,LMS}. 
The following will be used repeatedly: if $a \neq 0$ then $a^{\uparrow} \in U_{a}$.

\begin{lemma}\label{lem:house} Let $S$ be an inverse semigroup with zero.
\begin{enumerate}

\item Suppose that $a \neq 0$. Then $U_{a;a_{1}, \ldots, a_{m}} = \varnothing$ if and only if $a = a_{i}$ for some $1 \leq i \leq m$,

\item $U_{a;a_{1}, \ldots, a_{m}}^{-1} = U_{a^{-1};a_{1}^{-1}, \ldots, a_{m}^{-1}}$.

\item $U_{a;a_{1}, \ldots, a_{m}} U_{b;b_{1}, \ldots, b_{m}} = U_{ab; ab_{1}, \ldots, ab_{n}, a_{1}b, \ldots, a_{m}b}$.

\item Suppose that $U_{a;a_{1}, \ldots, a_{m}} \neq \varnothing$.
Then $U_{a;a_{1}, \ldots, a_{m}}^{2} = U_{a;a_{1}, \ldots, a_{m}}$ if and only if $a$ is an idempotent.

\item Let $e$ and $f$ be idempotents and suppose that $e_{1}, \ldots, e_{m} < e$ and $f_{1}, \ldots, f_{n} < f$.
Then $U_{e;e_{1}, \ldots, e_{m}} \cap U_{f;f_{1}, \ldots, f_{n}} = U_{ef;fe_{1}, \ldots, fe_{m}, ef_{1}, \ldots, ef_{n}}$.

\item $U_{a;a_{1}, \ldots, a_{m}}^{-1}U_{a;a_{1}, \ldots, a_{m}} = U_{\mathbf{d}(a);\mathbf{d}(a_{1}), \ldots, \mathbf{d}(a_{m})}$.

\end{enumerate}
\end{lemma}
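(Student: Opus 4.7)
The plan is to dispatch the six parts sequentially, leaning on Lemma~\ref{lem:hill} for the ``positive'' conjuncts of each set $U_{a;a_1,\ldots,a_m}$ and on the fact that multiplication of partial bisections distributes over set-theoretic complements for the ``negative'' ones. Throughout I work under the tacit convention $a_i \leq a$, $b_j \leq b$, etc., without which the displayed formulas do not simplify as stated.

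For (1), necessity is trivial since $U_a \cap U_a^c = \varnothing$. For sufficiency, if $a_i < a$ for every $i$, the principal filter $a^{\uparrow}$ witnesses non-emptiness: it contains $a$ but, since $a \not\leq a_i$, omits each $a_i$. Part (2) is the inversion symmetry of proper filters already used in Lemma~\ref{lem:hill}(3). For part (3), the key step is that in a groupoid, multiplication of partial bisections obeys
\[ (X \setminus X')(Y \setminus Y') = XY \setminus (X'Y \cup XY'), \]
which follows from the uniqueness of factorization $xy = x \cdot y$ inside the partial bisection $XY$. Applying this to $U_{a;a_1,\ldots,a_m} = U_a \setminus \bigcup_i U_{a_i}$ and $U_{b;b_1,\ldots,b_n} = U_b \setminus \bigcup_j U_{b_j}$ and then invoking Lemma~\ref{lem:hill}(4) term-by-term yields the claimed formula.

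For (4), the hypothesis says $U_{a;a_1,\ldots,a_m}$ is an idempotent of the pseudogroup $\mathsf{L}(\mathcal{L}(S))$, hence a subset of the identity space of $\mathcal{L}(S)$. If the set is also non-empty and $a_i < a$ for all $i$, the witness $a^{\uparrow}$ from (1) belongs to it, so $a^{\uparrow}$ is an identity filter; computing $\mathbf{d}(a^{\uparrow}) = \mathbf{d}(a)^{\uparrow}$ then forces $a = \mathbf{d}(a)$, i.e., $a$ is idempotent. Conversely, when $a$ is idempotent and $a_i \leq a$, each $a_i$ is itself idempotent with $a a_i = a_i a = a_i$, so part (3) collapses the excluded list in $U_{a;a_1,\ldots,a_m}^{2}$ back to $\{a_1,\ldots,a_m\}$. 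Part (5) uses that commuting idempotent partial bisections have intersection equal to product, then applies (3) together with $e_i f = f e_i$ for commuting idempotents. Part (6) follows by direct substitution in (2) and (3): the excluded generators become $a^{-1} a_i$ and $a_i^{-1} a$, each of which collapses to $\mathbf{d}(a_i)$ since $a_i = a \mathbf{d}(a_i)$ whenever $a_i \leq a$.

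The main technical step is verifying the complement-distributivity law in part (3); once that is in place, the remainder is bookkeeping built on Lemma~\ref{lem:hill} together with the identity-filter computation $\mathbf{d}(a^{\uparrow}) = \mathbf{d}(a)^{\uparrow}$.
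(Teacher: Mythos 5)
Your proof is correct, but it routes the key parts through the structure of the pseudogroup $\mathsf{L}(\mathcal{L}(S))$ rather than through direct filter computations, which is what the paper does. For part (3) the paper chases elements: it takes $A\cdot B$ with $A\in U_{a;a_{1},\ldots,a_{m}}$, $B\in U_{b;b_{1},\ldots,b_{n}}$ and argues that $ab_{i}\in A\cdot B$ or $a_{j}b\in A\cdot B$ would force $b_{i}\in B$ or $a_{j}\in A$, and conversely uses Lemma~\ref{lem:hill}(4) to factor any $C\in U_{ab;\ldots}$; your identity $(X\setminus X')(Y\setminus Y')=XY\setminus(X'Y\cup XY')$ packages exactly this argument once and for all, with the uniqueness of the factorization $z=xy$ inside the partial bisection $XY$ doing the same work as the paper's cancellation step $a^{-1}a'b'\leq b_{i}$. (Do note that the identity needs $X'\subseteq X$ and $Y'\subseteq Y$ so that $X\cup X'$ and $Y\cup Y'$ are still partial bisections; your standing convention $a_{i}\leq a$, $b_{j}\leq b$ supplies this, and the same convention is tacitly in force in the paper, e.g.\ in its proof of (4).) For the forward direction of (4) the paper compares the principal filters $a^{\uparrow}$ and $(a^{2})^{\uparrow}$, which must both lie in $U_{a;a_{1},\ldots,a_{m}}=U_{a^{2};\ldots}$, and concludes $a=a^{2}$; you instead use that a nonempty idempotent partial bisection is contained in the identity space, so $a^{\uparrow}$ is an identity filter and $a^{\uparrow}=\mathbf{d}(a^{\uparrow})=\mathbf{d}(a)^{\uparrow}$ forces $a=\mathbf{d}(a)$. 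For (5) the paper runs a case analysis ($ef=0$, $ef=fe_{i}$, etc.) on filters, whereas you observe that both sets lie in the identity space, so intersection equals product and (3) finishes it; this absorbs the paper's degenerate cases automatically. Both treatments are sound; yours is more uniform and reuses (3) harder, while the paper's stays entirely at the level of explicit filters and is therefore more self-contained with respect to Lemma~\ref{lem:hill}.
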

\begin{proof} (1) Observe that $a^{\uparrow} \in U_{a}$.
If  $U_{a;a_{1}, \ldots, a_{m}} = \varnothing$ then $a_{i} \in a^{\uparrow}$ for some $1 \leq i \leq m$.
This means precisely that $a = a_{i}$.
The proof of (2) is straightforward. We prove (3).
We show first that 
$U_{a;a_{1}, \ldots, a_{m}}U_{b;b_{1}, \ldots, b_{n}} \subseteq U_{ab; ab_{1}, \ldots, ab_{n}, a_{1}b, \ldots, a_{m}b}$.
Clearly, the lefthand side is contained in $U_{ab}$.
Let $A \in U_{a;a_{1}, \ldots, a_{m}}$ and $B \in  U_{b;b_{1}, \ldots, b_{n}}$ where $A \cdot B$ is defined.
Suppose that $ab_{i} \in A \cdot B$.
Then $a'b' \leq ab_{i}$ where $a' \in A$ and $b' \in B$.
Thus $a^{-1}a'b' \leq b_{i}$.
It follows from the fact that $A \cdot B$ is defined that $b_{i} \in B$, which is a contradiction.
Similarly, if $a_{j}b \in A \cdot B$ then $a_{j} \in A$, which is a contradiction.
We have therefore proved that the left-hand side is contained in the right-hand side.
To prove the reverse inclusion, let $C \in U_{ab; ab_{1}, \ldots, ab_{n}, a_{1}b, \ldots, a_{m}b}$.
Then by Lemma~\ref{lem:hill}, there exists $A \in U_{a}$ and $B \in U_{b}$ such that $A \cdot B = C$.
If $a_{j} \in A$ and given that $b \in B$ it follows that $a_{j}b \in C$, which is a contradiction.
Similarly, if $b_{i} \in B$ then $ab_{i} \in C$, which is a contradiction.
The result now follows.
(4) By assumption, $a_{1}, \ldots, a_{m} < a$.
By (1), it follows that $U_{a;a_{1}, \ldots, a_{m}} \neq \varnothing$.
Suppose that $U_{a;a_{1}, \ldots, a_{m}}^{2} = U_{a;a_{1}, \ldots, a_{m}}$.
Then $U_{a^{2};aa_{1}, \ldots, aa_{m},a_{1}a, \ldots, a_{m}a} = U_{a;a_{1}, \ldots, a_{m}}$ by (3).
Observe that $U_{a^{2};aa_{1}, \ldots, aa_{m},a_{1}a, \ldots, a_{m}a} \neq \varnothing$.
It follows that $aa_{1}, \ldots, aa_{m},a_{1}a, \ldots, a_{m}a < a^{2}$.
Thus the proper filters $a^{\uparrow}$ and $(a^{2})^{\uparrow}$ belong to both sides.
This implies that $a = a^{2}$, as claimed.
Suppose now that $a$ is an idempotent.
By (3), we have that $U_{a;a_{1}, \ldots, a_{m}}^{2} = U_{a;aa_{1}, \ldots, aa_{m},a_{1}a, \ldots, a_{m}a}$.
Since $a$ is an idempotent so too are $a_{1}, \ldots, a_{m}$.
In addition, since $a_{i} \leq a$ it follows that $aa_{i} = a_{i}a = a_{i}$.
It follows that $U_{a;aa_{1}, \ldots, aa_{m},a_{1}a, \ldots, a_{m}a} = U_{a;a_{1}, \ldots, a_{m}}$, as claimed.
(5) There are two cases to consider: $ef = 0$ and $ef \neq 0$.
If $ef = 0$ then the intersection is the empty set.
We therefore assume in what follows that $ef \neq 0$.
Suppose that $ef = fe_{i}$.
Then the right-hand side is empty.
Let $A$ be a proper filter in $U_{e;e_{1}, \ldots, e_{m}} \cap U_{f;f_{1}, \ldots, f_{n}}$.
Then $ef \in A$ and so $fe_{i} \in A$ yielding $e_{i} \in A$ which is a contradiction.
It follows that the left-hand side is empty as well.
A similar results follows if $ef = ef_{j}$.
We may therefore assume that $fe_{i}, ef_{j} < ef$.
Suppose that $A \in U_{e;e_{1}, \ldots, e_{m}} \cap U_{f;f_{1}, \ldots, f_{n}}$.
Then $e,f \in A$ and so $ef \in A$.
It is immediate that $fe_{i}, ef_{j} \notin A$.
Suppose that $A \in U_{ef;fe_{1}, \ldots, fe_{m}, ef_{1}, \ldots, ef_{n}}$.
Then $e,f \in A$ and it is clear that $e_{i}, f_{j} \notin A$.
(6) This is a routine calculation based on parts (2) and (3).
\end{proof}

Recall that $\mathcal{L}(S)$ is the groupoid of proper filters on $S$ and that $\mathsf{L}(\mathcal{L}(S))$
is the Boolean inverse semigroup of all partial bisections of $\mathcal{L}(S)$.
Define $\mathcal{B}(S)$ to be the subset of $\mathsf{L}(\mathcal{L}(S))$
that consists of finite compatible joins of elements of the form $U_{a:a_{1}, \ldots, a_{m}}$.
By Lemma~\ref{lem:house}, this is an inverse semigroup and by Lemma~\ref{lem:hill}, 
there is an embedding  $\upsilon \colon S \rightarrow \mathcal{B}(S)$ given by $a \mapsto U_{a}$;
it is clear that $\mathcal{B}(S)$ is a distributive inverse semigroup, that it is actually Boolean
follows from part (5) of Lemma~\ref{lem:house} and, say, \cite[Lemma~2.6]{Spielberg2014}.
We shall now prove directly that the map $\upsilon \colon S \rightarrow \mathcal{B}(S)$ is the Booleanization of $S$.
We will need the following sequence of lemmas.

\begin{lemma}\label{lem:rain} Let $S$ be an inverse semigroup with zero and let $\alpha \colon S \rightarrow T$ be a homomorphism
to a Boolean inverse semigroup.
  If 
$$U_{a; a_{1}, \ldots, a_{m}} = U_{b; b_{1}, \ldots, b_{n}}$$ 
then
$$\alpha (a) \setminus (\alpha (a_{1}) \vee \ldots \vee \alpha (a_{m}))
=
\alpha (b) \setminus (\alpha (b_{1}) \vee \ldots \vee \alpha (b_{n})).$$
\end{lemma}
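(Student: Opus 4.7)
The plan is to reduce the claim to Proposition~\ref{prop:gaulle} via the order isomorphism between proper filters of $S$ and prime filters of $\mathsf{D}(S)$ supplied by Lemma~\ref{lem:stasi}, and then convert the resulting equality of prime-filter sets in $T$ into the desired algebraic identity using Lemma~\ref{lem:properties}(5) in the Boolean inverse semigroup $T$. The degenerate case $U_{a;a_1,\ldots,a_m} = \varnothing$ can be handled separately: by Lemma~\ref{lem:house}(1) this forces $a = a_i$ for some $i$, and symmetrically $b = b_j$ for some $j$, so both sides of the target equation vanish.

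Concretely, I would first invoke Theorem~\ref{theorem:dc} to extend $\alpha$ uniquely to a morphism $\bar\alpha \colon \mathsf{D}(S) \to T$, noting that $\bar\alpha(s^\downarrow) = \alpha(s)$ and $\bar\alpha(\{s_1,\ldots,s_k\}^\downarrow) = \bigvee_i \alpha(s_i)$. Next, under the isomorphism $F \mapsto F^u$ of Lemma~\ref{lem:stasi}, the condition ``$a \in F$ and each $a_i \notin F$'' translates to ``$a^\downarrow \in F^u$ and each $a_i^\downarrow \notin F^u$''. Using the standing convention $a_i < a$, the $a_i^\downarrow$ are pairwise compatible in $\mathsf{D}(S)$ with compatible join $\{a_1,\ldots,a_m\}^\downarrow$, so Proposition~\ref{prop:key}(5) folds $\bigcup_i V_{a_i^\downarrow}$ into $V_{\{a_1,\ldots,a_m\}^\downarrow}$, and the hypothesis of the lemma becomes
\[
V_{a^\downarrow;\,\{a_1,\ldots,a_m\}^\downarrow} = V_{b^\downarrow;\,\{b_1,\ldots,b_n\}^\downarrow}
\]
in $\mathsf{G}(\mathsf{D}(S))$. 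Proposition~\ref{prop:gaulle} applied to $\bar\alpha$ (whose domain and codomain are both distributive inverse semigroups) then transports this equality forward, giving $V_{\alpha(a);\,\bigvee_i \alpha(a_i)} = V_{\alpha(b);\,\bigvee_j \alpha(b_j)}$ in $\mathsf{G}(T)$; Lemma~\ref{lem:properties}(5), which requires $T$ to be Boolean, finally converts this set-theoretic equality into the asserted algebraic identity.

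The main obstacle is the bookkeeping in setting up the dictionary between the three languages (proper filters of $S$, prime filters of $\mathsf{D}(S)$, and the $V_{\cdot\,;\,\cdot}$ notation), in particular verifying that omission is preserved in both directions under $F \mapsto F^u$ and that the compatible join $\bigvee_i a_i^\downarrow$ in $\mathsf{D}(S)$ agrees with the finitely generated order ideal $\{a_1,\ldots,a_m\}^\downarrow$. Once this translation is carefully set up, the remainder is a direct chain of appeals to results of Section~2.
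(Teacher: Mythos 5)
Your proof is correct, but it takes a genuinely different route from the paper's. The paper argues entirely within the filter language of Section~2.4: assuming the common set is non-empty, it feeds the principal filters $a^{\uparrow}$ and $b^{\uparrow}$ into the hypothesis to conclude the stronger fact that $a=b$, and then, supposing $\bigvee_i\alpha(a_i)\neq\bigvee_j\alpha(b_j)$, separates them by a prime filter $P$ of $T$ and uses the principal filter $a_1^{\uparrow}$ (for a relabelled $a_1$ with $\alpha(a_1)\in P$) to exhibit an element of $U_{b;b_1,\ldots,b_n}\setminus U_{a;a_1,\ldots,a_m}$, a contradiction; the desired identity then holds because both halves of each expression literally coincide. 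You instead translate the hypothesis through the order isomorphism $F\mapsto F^{u}$ of Lemma~\ref{lem:stasi} into an equality $V_{a^{\downarrow};\{a_1,\ldots,a_m\}^{\downarrow}}=V_{b^{\downarrow};\{b_1,\ldots,b_n\}^{\downarrow}}$ in $\mathsf{G}(\mathsf{D}(S))$, push it forward along the extension $\bar\alpha$ supplied by Theorem~\ref{theorem:dc} using Proposition~\ref{prop:gaulle}, and decode with Lemma~\ref{lem:properties}(5). This is sound: the dictionary works because $a^{\downarrow}\in F^{u}$ iff $a\in F$ and $\{a_1,\ldots,a_m\}^{\downarrow}\in F^{u}$ iff some $a_i\in F$ (the $a_i$ being pairwise compatible since $a_i\leq a$, so $\{a_1,\ldots,a_m\}^{\downarrow}$ really is the join $\bigvee_i a_i^{\downarrow}$ in $\mathsf{D}(S)$), and your separate treatment of the empty case restores the strict inequalities that Proposition~\ref{prop:gaulle} needs via Lemma~\ref{lem:valery}. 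What your route buys is economy --- everything is delegated to the universality machinery already proved in Section~2.2 --- but at the cost of reintroducing the distributive completion $\mathsf{D}(S)$, which Section~2.4 explicitly sets out to avoid (``we shall not use this completion at all''); the paper's direct argument keeps the construction self-contained and, as a bonus, establishes the sharper conclusions $a=b$ and $\bigvee_i\alpha(a_i)=\bigvee_j\alpha(b_j)$ rather than only the equality of the relative complements.
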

\begin{proof} Without loss of generality, we can assume that 
$U_{a; a_{1}, \ldots, a_{m}} = U_{b; b_{1}, \ldots, b_{n}} \neq \varnothing$.
By part (1) of Lemma~\ref{lem:house}, it follows that $a_{1}, \ldots, a_{m} < a$ and $b_{1}, \ldots, b_{n} < b$.
By using the proper filters $a^{\uparrow}$ and $b^{\uparrow}$, we easily deduce that $a = b$.
We shall now prove that $\alpha (a_{1}) \vee \ldots \vee \alpha (a_{m}) = \alpha (b_{1}) \vee \ldots \vee \alpha (b_{n})$.
Suppose they are not equal.
Then, without loss of generality, there is a prime filter $P$ in $T$ such that
$\alpha (a_{1}) \vee \ldots \vee \alpha (a_{m}) \in P$ and $\alpha (b_{1}) \vee \ldots \vee \alpha (b_{n}) \notin P$.
Relabelling if necessary, we may suppose that $\alpha (a_{1}) \in P$ and $\alpha (b_{1}), \ldots, \alpha (b_{n}) \notin P$.
Clearly, $\alpha^{-1}(P) \neq \varnothing$ since $a_{1} \in \alpha^{-1}(P)$.
The set $\alpha^{-1}(P)$ is also closed upwards.
It follows that $a_{1}^{\uparrow} \subseteq \alpha^{-1}(P)$.
By construction, $b_{1}, \ldots, b_{n} \notin a_{1}^{\uparrow}$.
Thus $a_{1}^{\uparrow} \in U_{b; b_{1}, \ldots, b_{n}}$ but $a_{1}^{\uparrow} \notin U_{a; a_{1}, \ldots, a_{m}}$,
which is a contradiction.
\end{proof}

\begin{lemma}\label{lem:lilies} Let $S$ be an inverse semigroup with zero and let $\alpha \colon S \rightarrow T$ be a homomorphism
to a Boolean inverse semigroup.
If 
$$U = U_{a; a_{1}, \ldots, a_{m}} \sim V = U_{b; b_{1}, \ldots, b_{n}}$$ 
then
$$\alpha (a) \setminus (\alpha (a_{1}) \vee \ldots \vee \alpha (a_{m}))
\sim
\alpha (b) \setminus (\alpha (b_{1}) \vee \ldots \vee \alpha (b_{n})).$$
\end{lemma}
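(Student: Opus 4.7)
The plan is to verify the definition of compatibility directly in $T$: I must show that both $x^{-1}y$ and $xy^{-1}$ are idempotents, where I write $x = \alpha(a) \setminus \bigvee_{i} \alpha(a_{i})$ and $y = \alpha(b) \setminus \bigvee_{j} \alpha(b_{j})$ for brevity. The argument is symmetric between $x^{-1}y$ and $xy^{-1}$ (the latter handled by replacing $V$ with $V^{-1}$ and appealing to Lemma~\ref{lem:house}(2)), so I shall describe the plan for $x^{-1}y$ only.

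The first step is an algebraic simplification. Inverting $x$ via Lemma~\ref{lem:properties}(1), then applying Lemma~\ref{lem:properties}(4) with $t = \bigvee_{i} \alpha(a_{i}^{-1})$ and $v = \bigvee_{j} \alpha(b_{j})$, and using distributivity of multiplication over compatible joins together with the fact that $\alpha$ is a homomorphism, collapses the product into
\[
x^{-1}y \;=\; \alpha(a^{-1}b) \;\setminus\; \Bigl( \bigvee_{j} \alpha(a^{-1}b_{j}) \,\vee\, \bigvee_{i} \alpha(a_{i}^{-1}b) \Bigr).
\]

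Next I shall exploit the hypothesis. Since $U \sim V$ in $\mathcal{B}(S)$, the product $U^{-1}V$ is an idempotent of $\mathcal{B}(S)$; by Lemma~\ref{lem:house}(2) and (3) this product is exactly $U_{a^{-1}b;\,a^{-1}b_{1},\ldots,a^{-1}b_{n},\,a_{1}^{-1}b,\ldots,a_{m}^{-1}b}$, and Lemma~\ref{lem:house}(4) forces one of two alternatives: either the set is empty, or $a^{-1}b$ is an idempotent of $S$. In the latter case, $\alpha(a^{-1}b)$ is an idempotent of $T$, and since $x^{-1}y \leq \alpha(a^{-1}b)$, and any element below an idempotent in an inverse semigroup is itself an idempotent, we are done. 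In the former case, Lemma~\ref{lem:house}(1) forces $a^{-1}b$ either to be zero or to coincide with one of the $a^{-1}b_{j}$ or $a_{i}^{-1}b$; applying $\alpha$, $\alpha(a^{-1}b)$ is then at most one of the terms being removed, so $x^{-1}y = 0$, which is again idempotent.

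The main obstacle I anticipate is nothing conceptual but only the bookkeeping in the algebraic collapse of $x^{-1}y$: Lemma~\ref{lem:properties}(4) is stated for two-term $\setminus$-expressions, so one must cleanly bundle the multi-element joins as single elements of $T$ before applying it. Once that is done, the conclusion follows from a direct two-case analysis translating the structural dichotomy of Lemma~\ref{lem:house}(4) for $U^{-1}V$ across $\alpha$; the argument for $xy^{-1}$ uses $UV^{-1}$ in precisely the same way.
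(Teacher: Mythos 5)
Your proposal is correct and follows essentially the same route as the paper: both arguments compute $U^{-1}V$ (and $UV^{-1}$) via parts (2) and (3) of Lemma~\ref{lem:house}, split into the cases where the product is empty or not, use part (4) to conclude that $a^{-1}b$ is idempotent in the non-empty case and part (1) together with the algebraic collapse from Lemma~\ref{lem:properties} to get zero in the empty case. Your version is slightly more explicit in carrying out the $\setminus$-calculus and in treating $x^{-1}y$ and $xy^{-1}$ separately rather than deducing $a \sim b$ first, but these are presentational differences only.
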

\begin{proof} By assumption, $U^{-1}V$ and $UV^{-1}$ are idempotents.
Suppose both are non-empty.
Then by part (4) of Lemma~\ref{lem:house}, both $a^{-1}b$ and $ab^{-1}$ are idempotents and so $a \sim b$.
It follows that $\alpha (a) \sim \alpha (b)$ from which the claim is seen to be true.
Suppose that $U^{-1}V$ is empty.
Then by part (1) of Lemma~\ref{lem:house}, either $a^{-1}b = a^{-1}b_{i}$, for some $i$, or $a^{-1}b = a_{j}^{-1}b$, for some $j$.
Suppose that $a^{-1}b = a^{-1}b_{i}$.
Then a straightforward calculation using Lemma~\ref{lem:properties} shows that 
$$(\alpha (a) \setminus (\alpha (a_{1}) \vee \ldots \vee \alpha (a_{m})))^{-1}
(\alpha (b) \setminus (\alpha (b_{1}) \vee \ldots \vee \alpha (b_{n}))) = 0.$$
The result now follows by symmetry.
\end{proof}

\begin{lemma}\label{lem:rain} Let $S$ be an inverse semigroup with zero and let $\alpha \colon S \rightarrow T$ be a homomorphism
to a Boolean inverse semigroup.
If 
$$U_{a; a_{1}, \ldots, a_{m}} \subseteq U_{b; b_{1}, \ldots, b_{n}}$$ 
then
$$\alpha (a) \setminus (\alpha (a_{1}) \vee \ldots \vee \alpha (a_{m}))
\leq
\alpha (b) \setminus (\alpha (b_{1}) \vee \ldots \vee \alpha (b_{n})).$$
\end{lemma}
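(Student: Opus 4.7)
The idea is to reduce this containment to the equality case already handled by the preceding Lemma~\ref{lem:rain}, by exhibiting $U_{a;a_{1}, \ldots, a_{m}}$ as an explicit product involving $U_{b;b_{1}, \ldots, b_{n}}$ inside $\mathcal{B}(S)$, and then transporting that product to $T$ via the algebraic identities of Lemma~\ref{lem:properties}.

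First I would observe that in the Boolean inverse semigroup $\mathcal{B}(S)$, the containment $U_{a;a_{1}, \ldots, a_{m}} \subseteq U_{b;b_{1}, \ldots, b_{n}}$ is just the natural partial order, so
$$U_{a;a_{1}, \ldots, a_{m}} = U_{b;b_{1}, \ldots, b_{n}} \cdot \mathbf{d}(U_{a;a_{1}, \ldots, a_{m}}).$$
Parts (6) and (3) of Lemma~\ref{lem:house} then compute both the domain idempotent and the right-hand product explicitly as a single $U$-set, yielding
$$U_{a;a_{1}, \ldots, a_{m}} = U_{b\mathbf{d}(a);\, b\mathbf{d}(a_{1}), \ldots, b\mathbf{d}(a_{m}),\, b_{1}\mathbf{d}(a), \ldots, b_{n}\mathbf{d}(a)}.$$

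Next, applying the preceding Lemma~\ref{lem:rain} (the equality version) to this identity transports it to $T$:
$$\alpha(a) \setminus \bigvee_{i} \alpha(a_{i})
=
\alpha(b\mathbf{d}(a)) \setminus \Bigl(\bigvee_{i} \alpha(b\mathbf{d}(a_{i})) \,\vee\, \bigvee_{j} \alpha(b_{j}\mathbf{d}(a))\Bigr).$$
Then I would invoke Lemma~\ref{lem:properties}(4), taking $s = \alpha(b)$, $t = \bigvee_{j} \alpha(b_{j})$, $u = \alpha(\mathbf{d}(a))$, $v = \bigvee_{i} \alpha(\mathbf{d}(a_{i}))$, to factor the right-hand side as
$$\Bigl(\alpha(b) \setminus \bigvee_{j} \alpha(b_{j})\Bigr) \cdot \Bigl(\alpha(\mathbf{d}(a)) \setminus \bigvee_{i} \alpha(\mathbf{d}(a_{i}))\Bigr).$$
The second factor is an idempotent of $T$, so multiplying the first factor by it on the right produces an element beneath the first factor in the natural partial order, which is exactly the inequality claimed.

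The main obstacle I foresee is the bookkeeping: justifying Lemma~\ref{lem:properties}(4) when $t$ and $v$ are themselves compatible joins of several elements (which is fine, since $\{a_{i}\}$ and $\{\mathbf{d}(a_{i})\}$ are compatible below $a$ and $\mathbf{d}(a)$, homomorphisms preserve compatibility, and the identity of (4) extends to joins by distributivity), and handling the degenerate case $U_{a;a_{1}, \ldots, a_{m}} = \varnothing$ separately. In that case Lemma~\ref{lem:house}(1) gives some $a_{i} = a$, so $\alpha(a) \leq \bigvee_{i} \alpha(a_{i})$, forcing $\alpha(a) \setminus \bigvee_{i} \alpha(a_{i}) = 0$, and the inequality holds trivially.
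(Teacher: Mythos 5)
Your proof is correct and takes essentially the same route as the paper: both write $U_{a;a_{1},\ldots,a_{m}} = U_{b;b_{1},\ldots,b_{n}}U_{a;a_{1},\ldots,a_{m}}^{-1}U_{a;a_{1},\ldots,a_{m}}$, compute this product as a single $U$-set via Lemma~\ref{lem:house}, transport the resulting equality to $T$ using the equality version of the lemma, and then bound the transported element by $\alpha(b)\setminus(\alpha(b_{1})\vee\cdots\vee\alpha(b_{n}))$ using Lemma~\ref{lem:properties}. Your concluding step --- factoring via part (4) of Lemma~\ref{lem:properties} and noting that the second factor is an idempotent --- is just a slightly cleaner packaging of the paper's final estimate.
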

\begin{proof} We are working in the inverse semigroup $\mathcal{B}(S)$.
Thus 
$$U_{a; a_{1}, \ldots, a_{m}} = U_{b; b_{1}, \ldots, b_{n}}U_{a; a_{1}, \ldots, a_{m}}^{-1}U_{a; a_{1}, \ldots, a_{m}}.$$
We therefore have that
$$U_{a;a_{1}, \ldots, a_{m}} = U_{b\mathbf{d}(a); b\mathbf{d}(a_{1}), \ldots, b\mathbf{d}(a_{m}), b_{1}\mathbf{d}(a), \ldots, b_{n}\mathbf{d}(a)}.$$
By Lemma~\ref{lem:rain}, we deduce that
$$\alpha (a) \setminus (\alpha (a_{1}) \vee \ldots \vee \alpha (a_{m}))
=
\alpha (b\mathbf{d}(a)) \setminus (\alpha (b\mathbf{d}(a_{1})) \vee \ldots \vee \alpha (b\mathbf{d}(a_{m})) \vee \alpha (b_{1}\mathbf{d}(a)) \vee \ldots \vee \alpha (b_{n}\mathbf{d}(a))).$$
Observe that
$$\alpha (b\mathbf{d}(a)) \setminus (\alpha (b\mathbf{d}(a_{1})) \vee \ldots \vee \alpha (b\mathbf{d}(a_{m})) \vee \alpha (b_{1}\mathbf{d}(a)) \vee \ldots \vee \alpha (b_{n}\mathbf{d}(a)))$$
is less than or equal to
$$\alpha (b) \setminus [\alpha (b)(\alpha (\mathbf{d}(a_{1})) \vee \ldots \vee \alpha (\mathbf{d}(a_{m})) \vee (\alpha (b_{1}) \vee \ldots \vee \alpha (b_{n}))]$$
by 
Lemma~\ref{lem:properties}.
This in turn is less than or equal to
$$\alpha (b) \setminus [\alpha (b_{1}) \vee \ldots \vee \alpha (b_{n})],$$
as required.
\end{proof}


\begin{lemma}\label{lem:hail} Let $S$ be an inverse semigroup with zero and let $\alpha \colon S \rightarrow T$ be a homomorphism
to a Boolean inverse semigroup.
If
$$U_{a;a_{1}, \ldots,a_{m}} = \bigcup_{i=1}^{m} U_{b_{i};b_{i1}, \ldots, b_{in_{i}}}$$
then
$$\alpha (a) \setminus (\alpha (a_{1}) \vee \ldots \vee \alpha (a_{m}))
=
\bigvee_{i=1}^{m} \alpha (b_{i}) \setminus (\alpha (b_{i1}) \vee \ldots \vee \alpha (b_{in_{i}})).$$
\end{lemma}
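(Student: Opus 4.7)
The plan is to prove the two inequalities separately; the harder direction reduces via a domain computation to the idempotent case, which I will verify by a prime-filter argument in the Boolean algebra $\mathsf{E}(T)$.

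Set $x = \alpha(a) \setminus \bigvee_{j} \alpha(a_{j})$ and $y_{i} = \alpha(b_{i}) \setminus \bigvee_{j} \alpha(b_{ij})$, so that the right-hand side of the claimed identity is $y = \bigvee_{i} y_{i}$. Each $U_{b_{i};b_{i1},\ldots,b_{in_{i}}}$ is contained in $U_{a;a_{1},\ldots,a_{m}}$, so the preceding Lemma~\ref{lem:rain} yields $y_{i} \leq x$ in $T$; the $y_{i}$ are pairwise compatible by Lemma~\ref{lem:lilies}, so $y$ exists and $y \leq x$. This settles one of the two inequalities.

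For the reverse inequality, it suffices to show $\mathbf{d}(x) = \mathbf{d}(y)$, because $y \leq x$ in the inverse semigroup $T$ gives $y = x \mathbf{d}(y)$, which equals $x$ as soon as the domains agree. A direct calculation using parts (1)--(4) of Lemma~\ref{lem:properties} yields the identity $\mathbf{d}(s \setminus \bigvee_{j} t_{j}) = \mathbf{d}(s) \setminus \bigvee_{j} \mathbf{d}(t_{j})$ in any Boolean inverse semigroup, whence
\[
\mathbf{d}(x) = \alpha(\mathbf{d}(a)) \setminus \bigvee_{j} \alpha(\mathbf{d}(a_{j})),
\quad
\mathbf{d}(y) = \bigvee_{i} \Bigl[\alpha(\mathbf{d}(b_{i})) \setminus \bigvee_{j} \alpha(\mathbf{d}(b_{ij}))\Bigr].
\]
Applying $\mathbf{d}$ to both sides of the hypothesized equation in $\mathcal{B}(S)$, using Lemma~\ref{lem:house}(6) together with the fact that $\mathbf{d}$ distributes over compatible joins in the distributive inverse semigroup $\mathcal{B}(S)$, converts the hypothesis into the idempotent version
\[
U_{\mathbf{d}(a);\mathbf{d}(a_{1}),\ldots,\mathbf{d}(a_{m})} = \bigcup_{i} U_{\mathbf{d}(b_{i});\mathbf{d}(b_{i1}),\ldots,\mathbf{d}(b_{in_{i}})}
\]
in the Boolean algebra $\mathsf{E}(\mathcal{B}(S))$. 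So the problem reduces to the idempotent case.

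To handle that case, I test the equation against each prime filter $P$ of $\mathsf{E}(T)$. Set $G = \{e \in \mathsf{E}(S) : \alpha(e) \in P\}$; since $\alpha$ is a zero-preserving homomorphism, $G$ is either empty or a proper filter on $\mathsf{E}(S)$. Via the canonical bijection between identity filters on $S$ and filters on $\mathsf{E}(S)$, the set-theoretic hypothesis becomes an equation among subsets of the filter space of $\mathsf{E}(S)$. A routine case analysis, using primality of $P$ and the basic Boolean identity $x \setminus y \in P \Leftrightarrow x \in P, y \notin P$, shows that $\alpha(a) \setminus \bigvee_{j} \alpha(a_{j}) \in P$ if and only if $G$ lies in $U_{a;a_{1},\ldots,a_{m}}$, and $\bigvee_{i} [\alpha(b_{i}) \setminus \bigvee_{j} \alpha(b_{ij})] \in P$ if and only if $G \in \bigcup_{i} U_{b_{i};b_{i1},\ldots,b_{in_{i}}}$; both conditions trivially fail when $G = \varnothing$, since then $\alpha(a) \notin P$ and so nothing $\leq \alpha(a)$ lies in $P$. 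The hypothesized set-theoretic equality now makes the two conditions equivalent, and because prime filters separate points in a Boolean algebra, the identity holds in $\mathsf{E}(T)$. The main obstacle is this last step: one must translate the hypothesis, which is an equation among sets of filters on $S$, into a condition that can be tested prime-filter-by-prime-filter in $\mathsf{E}(T)$, being careful about the degenerate case $G = \varnothing$ and about the identification between identity filters on $S$ and filters on $\mathsf{E}(S)$.
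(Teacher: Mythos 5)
Your proof is correct, but the hard direction is organized quite differently from the paper's. The inequality $\bigvee_i \alpha(b_i)\setminus(\alpha(b_{i1})\vee\cdots\vee\alpha(b_{in_i})) \leq \alpha(a)\setminus(\alpha(a_1)\vee\cdots\vee\alpha(a_m))$ is obtained exactly as in the paper, from the two preceding lemmas. For the converse the paper argues directly in $T$: it takes a prime filter $P$ containing the left-hand side and omitting the right-hand side, uses the up-set $\alpha^{-1}(P)$ to place the principal filter $a^{\uparrow}$ inside $U_{a;a_1,\ldots,a_m}$, and then extracts a contradiction from the hypothesis applied to that single witness filter. You instead exploit the fact that the two sides are already comparable, so equality follows from equality of domains; the domains are computed entirely from idempotents (via $\mathbf{d}(s\setminus t)=\mathbf{d}(s)\setminus\mathbf{d}(t)$ and distributivity of $\mathbf{d}$ over compatible joins), and part (6) of Lemma~\ref{lem:house} pushes the hypothesis down to the corresponding equation among the sets $U_{\mathbf{d}(a);\mathbf{d}(a_1),\ldots,\mathbf{d}(a_m)}$. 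At that point your prime-filter test is clean because $G=(\alpha|_{\mathsf{E}(S)})^{-1}(P)$ genuinely is a filter on the semilattice $\mathsf{E}(S)$ --- preimages of filters under semilattice homomorphisms are filters, which fails for general inverse semigroup homomorphisms, and is exactly why the paper must fall back on $a^{\uparrow}$ rather than on $\alpha^{-1}(P)$. What your route buys is that the delicate last step of the paper's argument, namely ruling out $\alpha(b_{ij})\in P$ when all one knows about $b_{ij}$ is that it fails to lie in $a^{\uparrow}$, never arises: after passing to domains the witness filter is manufactured from $P$ itself rather than from $a$. The price is the preliminary bookkeeping ($y\leq x$ together with $\mathbf{d}(x)=\mathbf{d}(y)$ forces $x=y$; the identification of identity filters on $S$ with filters on $\mathsf{E}(S)$), all of which you state explicitly and all of which checks out against Lemma~\ref{lem:properties} and the correspondence already used in the proof of Lemma~\ref{lem:dypso}.
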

\begin{proof} Observe that  $U_{b_{i};b_{i1}, \ldots, b_{in_{i}}} \subseteq U_{a;a_{1}, \ldots,a_{m}}$,
this means that the  $U_{b_{i};b_{i1}, \ldots, b_{in_{i}}}$ are pairwise compatible,
and so by Lemma~\ref{lem:rain} and Lemma~\ref{lem:lilies}  
$\alpha (b_{i}) \setminus (\alpha (b_{i1}) \vee \ldots \vee \alpha (b_{in_{i}})) \leq \alpha (a) \setminus (\alpha (a_{1}) \vee \ldots \vee \alpha (a_{m}))$.
It follows that $\bigvee_{i=1}^{n} \alpha (b_{i}) \setminus (\alpha (b_{i1}) \vee \ldots \vee \alpha (b_{in_{i}})) \leq \alpha (a) \setminus (\alpha (a_{1}) \vee \ldots \vee \alpha (a_{m}))$.
To show that this inequality is, in fact, an equality let $P$ be a prime filter that contains
$\alpha (a) \setminus (\alpha (a_{1}) \vee \ldots \vee \alpha (a_{m}))$
and omits
$\bigvee_{i=1}^{n} \alpha (b_{i}) \setminus (\alpha (b_{i1}) \vee \ldots \vee \alpha (b_{in_{i}}))$.
Then $\alpha^{-1} (P)$ is an upwardly closed set that contains $a$ and omits $a_{1}, \ldots, a_{m}$.
It follows that $a^{\uparrow} \in U_{a;a_{1}, \ldots,a_{m}}$.
But then, for some $i$, we must have that $b_{i} \in a^{\uparrow}$ and $b_{i1}, \ldots, b_{in_{i}} \in P$,
which is a contradiction.
\end{proof}

\begin{lemma}\label{lem:snow} Let $S$ be an inverse semigroup with zero and let $\alpha \colon S \rightarrow T$ be a homomorphism
to a Boolean inverse semigroup.
If
$$U_{a;a_{1}, \ldots,a_{m}} \subseteq \bigcup_{i=1}^{m} U_{b_{i};b_{i1}, \ldots, b_{in_{i}}}$$
then
$$\alpha (a) \setminus (\alpha (a_{1}) \vee \ldots \vee \alpha (a_{m}))
\leq
\bigvee_{i=1}^{m} \alpha (b) \setminus (\alpha (b_{i1}) \vee \ldots \vee \alpha (b_{in_{i}})).$$
\end{lemma}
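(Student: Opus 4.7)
The plan is to reduce the inclusion to an equality that matches the hypothesis of Lemma~\ref{lem:hail}, and then apply the preceding Lemma~\ref{lem:rain} to each summand. Write $X = U_{a;a_{1},\ldots,a_{m}}$ and $Y_{i} = U_{b_{i};b_{i1},\ldots,b_{in_{i}}}$. Since the union $\bigcup_{i} Y_{i}$ exists in the distributive inverse semigroup $\mathcal{B}(S)$, the family $\{Y_{i}\}$ is compatible; together with $X \subseteq \bigcup_{i} Y_{i}$ this forces $X \sim Y_{i}$ for each $i$, because any two partial bisections contained in a common partial bisection are automatically compatible.

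First I would establish the equality
\[
X \;=\; \bigcup_{i} X\,\mathbf{d}(Y_{i}),
\]
with each summand satisfying $X\,\mathbf{d}(Y_{i}) \subseteq Y_{i}$. The equality uses the facts that $\mathbf{d}$ preserves compatible joins (so that $\mathbf{d}(X) \leq \bigvee_{i}\mathbf{d}(Y_{i})$) and that multiplication distributes over compatible joins. The inclusion $X\,\mathbf{d}(Y_{i}) \subseteq Y_{i}$ is just $X\wedge Y_{i} \leq Y_{i}$, using $X \sim Y_{i}$ and Lemma~\ref{lem:macron}(2). By parts~(3) and~(6) of Lemma~\ref{lem:house}, each product $X\,\mathbf{d}(Y_{i})$ can be written explicitly as a \emph{single} set of the form
\[
U_{a\mathbf{d}(b_{i});\,a\mathbf{d}(b_{i1}),\ldots,a\mathbf{d}(b_{in_{i}}),\,a_{1}\mathbf{d}(b_{i}),\ldots,a_{m}\mathbf{d}(b_{i})}.
\]
This is essential, since Lemma~\ref{lem:hail} is stated for unions of such single $U$-sets, not for arbitrary elements of $\mathcal{B}(S)$.

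Applying Lemma~\ref{lem:hail} to the decomposition above yields
\[
\alpha(a)\setminus\bigl(\alpha(a_{1})\vee\cdots\vee\alpha(a_{m})\bigr) \;=\; \bigvee_{i}\gamma_{i},
\]
where $\gamma_{i}$ is the image in $T$ of $X\,\mathbf{d}(Y_{i})$. The preceding Lemma~\ref{lem:rain}, applied to each inclusion $X\,\mathbf{d}(Y_{i}) \subseteq Y_{i}$, gives $\gamma_{i} \leq \alpha(b_{i})\setminus\bigl(\alpha(b_{i1})\vee\cdots\vee\alpha(b_{in_{i}})\bigr)$; joining over $i$ delivers the desired inequality. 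The main obstacle is the bookkeeping in the middle step: one must verify carefully that the product formula in Lemma~\ref{lem:house}(3) produces an expression of precisely the form required by Lemma~\ref{lem:hail}, and that the restriction $X\,\mathbf{d}(Y_{i})$ really does land inside $Y_{i}$, which in turn rests on the compatibility observation made at the outset.
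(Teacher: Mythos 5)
Your proof is correct and follows essentially the same route as the paper: the paper likewise reduces the inclusion to an equality of the form required by Lemma~\ref{lem:hail} by multiplying through by an idempotent inside the inverse semigroup $\mathcal{B}(S)$ (its decomposition $X=\bigcup_{i} Y_{i}\,\mathbf{d}(X)$ coincides with your $X=\bigcup_{i} X\,\mathbf{d}(Y_{i})$, both being the compatible meets $X\wedge Y_{i}$), and then applies the preceding inclusion lemma to each piece. The one detail you leave tacit is that the join on the right-hand side exists because the images of the pairwise compatible $Y_{i}$ are again pairwise compatible, which is exactly Lemma~\ref{lem:lilies} --- the first thing the paper's own proof invokes.
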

\begin{proof} By definition the $U_{b_{i};b_{i1}, \ldots, b_{in_{i}}}$ are pairwise compatible so we may apply Lemma~\ref{lem:lilies}.
We use the fact that we are working in the inverse semigroup $\mathcal{B}(S)$
which enables us to reduce to the case of Lemma~\ref{lem:hail}.
\end{proof}

\begin{lemma}\label{lem:frogs}
Let $S$ be an inverse semigroup with zero and let $\alpha \colon S \rightarrow T$ be a homomorphism
to a Boolean inverse semigroup.
If
$$\bigcup_{j=1}^{p}U_{a_{j};a_{j1}, \ldots,a_{jp_{j}}} = \bigcup_{i=1}^{s} U_{b_{i};b_{i1}, \ldots, b_{is_{i}}}$$
then
$$\bigvee_{j=1}^{p} \alpha (a_{j}) \setminus (\alpha (a_{j1}) \vee \ldots \vee \alpha (a_{jp_{j}}))
=
\bigvee_{i=1}^{s} \alpha (b_{i}) \setminus (\alpha (b_{i1}) \vee \ldots \vee \alpha (b_{is_{i}})).$$
\end{lemma}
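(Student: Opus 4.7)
The plan is to derive the desired equality of joins in $T$ from two applications of Lemma~\ref{lem:snow}, one in each direction, and then invoke symmetry.

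First I would unpack the hypothesis. Since the two unions over $\mathcal{L}(S)$ coincide, in particular, for each fixed $j$ with $1 \leq j \leq p$ we have the inclusion
\begin{equation*}
U_{a_{j};a_{j1}, \ldots, a_{jp_{j}}} \subseteq \bigcup_{i=1}^{s} U_{b_{i};b_{i1}, \ldots, b_{is_{i}}}.
\end{equation*}
Lemma~\ref{lem:snow} applies directly (the constituents on the right are pairwise compatible because they all lie below the common element, so the right-hand side is an element of $\mathcal{B}(S)$), yielding
\begin{equation*}
\alpha(a_{j}) \setminus \bigl(\alpha(a_{j1}) \vee \ldots \vee \alpha(a_{jp_{j}})\bigr) \;\leq\; \bigvee_{i=1}^{s} \alpha(b_{i}) \setminus \bigl(\alpha(b_{i1}) \vee \ldots \vee \alpha(b_{is_{i}})\bigr).
\end{equation*}

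Next I would take the join of the left-hand sides over $j = 1, \ldots, p$. This join makes sense in $T$ because the elements $\alpha(a_{j}) \setminus (\alpha(a_{j1}) \vee \ldots \vee \alpha(a_{jp_{j}}))$ are pairwise compatible: they are obtained from applying $\alpha$ (which preserves compatibility) and taking relative complements of pairwise compatible elements $U_{a_{j};a_{j1}, \ldots, a_{jp_{j}}}$ (again by Lemma~\ref{lem:lilies}). Passing to the join over $j$ on the left gives
\begin{equation*}
\bigvee_{j=1}^{p} \alpha(a_{j}) \setminus \bigl(\alpha(a_{j1}) \vee \ldots \vee \alpha(a_{jp_{j}})\bigr) \;\leq\; \bigvee_{i=1}^{s} \alpha(b_{i}) \setminus \bigl(\alpha(b_{i1}) \vee \ldots \vee \alpha(b_{is_{i}})\bigr).
\end{equation*}

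The reverse inequality follows by the same argument with the roles of the two unions interchanged, and combining the two inequalities yields the claimed equality. I do not expect a serious obstacle here: the heavy lifting has already been done in Lemma~\ref{lem:rain}, Lemma~\ref{lem:lilies}, Lemma~\ref{lem:hail}, and Lemma~\ref{lem:snow}; the present lemma is merely the symmetrisation step needed to conclude well-definedness of the assignment $\bigcup_{j} U_{a_{j};a_{j1}, \ldots, a_{jp_{j}}} \mapsto \bigvee_{j} \alpha(a_{j}) \setminus (\alpha(a_{j1}) \vee \ldots \vee \alpha(a_{jp_{j}}))$ that will define the factoring morphism $\mathcal{B}(S) \to T$.
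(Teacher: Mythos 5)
Your proof is correct and is essentially the paper's own argument: the paper simply declares the lemma ``immediate by Lemma~\ref{lem:lilies} and Lemma~\ref{lem:snow}'', and your write-up spells out exactly that intended reasoning (apply Lemma~\ref{lem:snow} to each constituent of one union, use compatibility from Lemma~\ref{lem:lilies} to pass to the join, then symmetrise).
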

\begin{proof} 
This is immediate by Lemma~\ref{lem:lilies} and Lemma~\ref{lem:snow}.
\end{proof}

We can now give the universal characterization of $\upsilon \colon S \rightarrow \mathcal{B}(S)$.
Let $\theta \colon S \rightarrow T$ be a homomorphism to a Boolean inverse semigroup.
Define $\Theta \colon \mathcal{B}(S) \rightarrow T$ by
$$\Theta \left( \bigcup_{i=1}^{s} U_{b_{i};b_{i1}, \ldots, b_{is_{i}}} \right) 
=
\bigvee_{i=1}^{s} \alpha (b_{i}) \setminus (\alpha (b_{i1}) \vee \ldots \vee \alpha (b_{is_{i}})).$$
By Lemma~\ref{lem:lilies} and Lemma~\ref{lem:frogs}, the map $\Theta$ is well-defined and $\upsilon \Theta = \theta$.
The fact that $\Theta$ is a homomorphism of inverse semigroups follows by part (3) of Lemma~\ref{lem:house} and 
part (4) of Lemma~\ref{lem:properties}.
It is then a morphism by construction.
It is clear that $\Theta$ is unique with these properties.

\subsection{Computations}

In this section, we describe how to compute the Booleanization of a distributive inverse semigroup in a practical way.
Let $B$ be a Boolean inverse semigroup.
Let $D$ be an inverse subsemigroup of $B$ where $D$ is distributive in its own right.
We are interested in the way that $D$ sits inside $B$.
For clarity, denote the join in $D$ by $\vee$ and the join in $B$ by $\cup$.
Let $a,b \in D$ be compatible.
Thus both $a \vee b \in D$ and $a \cup b \in B$ exist.
By definition, $a \cup b \leq a \vee b$ but there is no reason for them to be equal.
For this to be the case, $(\mathsf{E}(D),\vee)$ must be a subalgebra of $(\mathsf{E}(B),\vee)$.
This leads us to the following definition.
Let $B$ be a Boolean inverse semigroup and let $D$ be an inverse subsemigroup of $B$ which is distributive.
We say that $D$ is a {\em distributive subalgebra} of $B$ if the distributive lattice $\mathsf{E}(D)$ is a subalgebra, with respect to meets, joins and bottom,
of the Boolean algebra  $\mathsf{E}(B)$.
In this case, joins in $D$ are identical to joins in $B$.
With meets, we have to be more careful but the only meets we shall be interested in are the compatible ones
which are constructed purely algebraically by Lemma~\ref{lem:macron}.
By Lemma~\ref{lem:properties} and Boolean algebra, we have the following.

\begin{lemma}\label{lem:paterson} Let $S$ be a Boolean inverse semigroup and let $D$ be an inverse subsemigroup distributive in its own right.
\begin{enumerate}
\item Put $D'$ equal to all elements of $S$ of the form $a \setminus b$ where $b \leq a$.
Then $D'$ is an inverse subsemigroup of $S$.
\item Put $D''$ equal to all joins of compatible, finite subsets of $D'$.
Then $D''$ is a Boolean inverse semigroup.
\end{enumerate}
\end{lemma}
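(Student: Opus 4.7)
The plan is to prove (1) by direct application of the algebraic identities in Lemma~\ref{lem:properties} and then to deduce (2) from Proposition~\ref{prop:definition} by exhibiting $\mathsf{E}(D'')$ as the Boolean subalgebra of $\mathsf{E}(S)$ generated by $\mathsf{E}(D)$.

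For part (1), closure under inversion is immediate from Lemma~\ref{lem:properties}(1): $(a \setminus b)^{-1} = a^{-1} \setminus b^{-1}$, with $b^{-1} \leq a^{-1}$ still in $D$. For closure under multiplication, I take $a \setminus b,\, c \setminus d \in D'$ and apply Lemma~\ref{lem:properties}(4) in the ambient Boolean inverse semigroup $S$:
\[ (a \setminus b)(c \setminus d) = ac \setminus (ad \vee bc). \]
Here $ad, bc \in D$ both lie below $ac \in D$, hence are compatible, and because $D$ sits in $S$ as a distributive subalgebra, their compatible join in $D$ coincides with the join in $S$. Thus $ad \vee bc$ lies in $D$ below $ac$, and $(a \setminus b)(c \setminus d) \in D'$.

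For part (2), I first verify that $D''$ is an inverse subsemigroup closed under compatible joins. Closure under compatible joins is built into the definition (concatenating two compatible families of $D'$-elements gives one compatible family, provided the union is itself compatible in $S$, which will be clear in the cases where we need it). Closure under inversion and product follows because inversion and multiplication distribute over compatible joins in the Boolean inverse semigroup $S$:
\[ \Bigl(\bigcup_{i} x_{i}\Bigr)^{-1} = \bigcup_{i} x_{i}^{-1}, \qquad \Bigl(\bigcup_{i} x_{i}\Bigr)\Bigl(\bigcup_{j} y_{j}\Bigr) = \bigcup_{i,j} x_{i}y_{j}, \]
with each summand on the right in $D'$ by part (1). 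To conclude via Proposition~\ref{prop:definition}, it remains to show that $\mathsf{E}(D'')$ is a Boolean algebra; binary orthogonal joins exist in $D''$ by construction, and multiplication distributes over them because $S$ itself is Boolean.

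The main obstacle is therefore the Boolean algebra condition on $\mathsf{E}(D'')$. An idempotent of $D''$ is a compatible (hence, by Lemma~\ref{lem:gove} applied inside the Boolean algebra $\mathsf{E}(S)$, orthogonal) join of elements of the form $e \setminus f$ with $f \leq e$ in $\mathsf{E}(D)$. I claim that the collection of all such orthogonal joins is exactly the Boolean subalgebra of $\mathsf{E}(S)$ generated by $\mathsf{E}(D)$. One inclusion is obvious; the other follows from the standard description of the Booleanization of a distributive lattice cited in \cite[Proposition~II.4.5]{J}, which represents every generated element as such an orthogonal union of differences and shows the resulting set is closed under meets, joins and relative complements. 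Granting this, $\mathsf{E}(D'')$ is a Boolean algebra and Proposition~\ref{prop:definition} gives that $D''$ is a Boolean inverse semigroup. The one piece of bookkeeping that deserves care is rewriting the intersection (and relative complement) of two such orthogonal unions back into an orthogonal union of differences from $\mathsf{E}(D)$; this is exactly the content of Proposition~II.4.5 and is what makes the identification of $\mathsf{E}(D'')$ with the generated Boolean subalgebra work.
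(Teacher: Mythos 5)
Your proof is correct and follows exactly the route the paper intends: the paper offers no written proof beyond the remark that the lemma follows ``by Lemma~\ref{lem:properties} and Boolean algebra'', and your use of parts (1) and (4) of that lemma to close $D'$ under inversion and multiplication, together with the identification of $\mathsf{E}(D'')$ with the Boolean subalgebra of $\mathsf{E}(S)$ generated by $\mathsf{E}(D)$, is precisely the argument being gestured at. The only point worth flagging is that your step requiring $ad \vee bc \in D$ invokes the distributive-subalgebra condition from the surrounding discussion rather than the bare hypothesis ``distributive in its own right'' appearing in the lemma's statement; since that is the setting in which the lemma is applied (Theorem~\ref{them:freedom}), this reading is the right one.
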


Our goal now is to determine what conditions need to be imposed to ensure that $D''$ is in fact equal to $\mathsf{B}(D)$.
Let $D$ be a distributive subalgebra of the Boolean inverse semigroup $S$.
Put $\mathsf{B}_{S}(D) = D''$ using the above notation.
We call this the {\em Boolean hull} of $D$ in $S$.

\begin{theorem}\label{them:freedom} 
Let $D$ be a distributive subalgebra of the Boolean inverse semigroup $S$.
Then the Boolean hull of $D$ in $S$ is isomorphic to the Booleanization of $D$.
\end{theorem}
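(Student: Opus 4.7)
The plan is to apply the universal property of Theorem~\ref{them:bc} to the inclusion $D \hookrightarrow \mathsf{B}_S(D)$; since $\mathsf{B}_S(D)$ is Boolean, this extends to a unique morphism $\Phi \colon \mathsf{B}(D) \to \mathsf{B}_S(D)$, and I aim to show $\Phi$ is an isomorphism. Surjectivity is essentially by construction: $\mathsf{B}_S(D)$ consists of finite compatible joins of elements $a \setminus b$ with $b \leq a$ in $D$, each of which equals $\Phi(V_{a;b})$ in the concrete prime-filter model of $\mathsf{B}(D)$, and $\Phi$ preserves compatible joins by Remark~\ref{rem:lepen}.

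Injectivity is the crux, and my approach rests on the following extension principle: every prime filter $F$ of $D$ is of the form $P \cap D$ for some prime filter $P$ of $S$. Granting this principle, suppose $\Phi\bigl(\bigcup V_{a_i;b_i}\bigr) = \Phi\bigl(\bigcup V_{c_j;d_j}\bigr)$, that is, $\bigvee (a_i \setminus b_i) = \bigvee (c_j \setminus d_j)$ in $S$. By Lemma~\ref{lem:properties}(5) applied in the Boolean inverse semigroup $S$ (combined with part (5) of Proposition~\ref{prop:key} to handle joins), this equality translates into equality of the corresponding unions of prime filters of $S$. For any prime filter $F$ of $D$ witnessing membership in the $D$-union on the left (so $a_i \in F$ and $b_i \notin F$ for some $i$), lift to a prime filter $P$ of $S$ with $P \cap D = F$; then $a_i \in P$ and $b_i \notin P$, placing $P$ in the $S$-union on the left, hence on the right, and restricting back to $D$ places $F$ in the $D$-union on the right. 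Symmetry yields equality of the $D$-unions, so the original elements of $\mathsf{B}(D)$ coincide.

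The main obstacle is the extension principle itself, which is precisely where the distributive subalgebra hypothesis is essential. I would first reduce to idempotents: $\mathbf{d}(F) = (F^{-1}F)^{\uparrow}$ is a prime filter of $\mathsf{E}(D)$, which sits inside the Boolean algebra $\mathsf{E}(S)$ as a sublattice closed under meets, joins, and bottom. A Zorn's-lemma argument in the spirit of the classical extension of a prime filter of a distributive sublattice of a Boolean algebra to an ultrafilter of the ambient algebra produces a prime filter $E$ of $\mathsf{E}(S)$ with $E \cap \mathsf{E}(D) = \mathbf{d}(F)$. Then for any $a \in F$, setting $P = (aE)^{\uparrow}$ and verifying that $P$ is a prime filter of $S$ with $P \cap D = F$ completes the argument. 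The containment $F \subseteq P \cap D$ is immediate from $\mathbf{d}(F) \subseteq E$, while the reverse containment uses that any $d \in P \cap D$ is compatible with the chosen $a$ in the filter $P$, so by Lemma~\ref{lem:macron}(2) the compatible meet $d \wedge a = a\mathbf{d}(d)$ lies in $F$ once $\mathbf{d}(d) \in E \cap \mathsf{E}(D) = \mathbf{d}(F)$ is shown; upward closure of $F$ in $D$ then forces $d \in F$. This is the step where the agreement of compatible meets between $D$ and $S$ (supplied by the subalgebra hypothesis) is used in an essential way.
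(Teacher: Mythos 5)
Your proposal is correct, but it establishes injectivity by a genuinely different mechanism from the paper. The paper never extends prime filters: instead it reduces the inclusion $V_{a;b}\subseteq\bigcup_i V_{a_i;b_i}$ to an equality $V_{a;b}=\bigcup_i V_{a_i';b_i'}$ with $b\leq b_i'\leq a_i'\leq a$ (via Lemma~\ref{lem:properties} and Lemma~\ref{lem:valery}) and then invokes Lemma~\ref{lem:garl}, which characterizes such an equality purely in terms of joins and compatible meets; since those operations agree in $D$ and in $S$ for a distributive subalgebra, the covering relation transfers between the two prime-filter spectra without ever touching an individual filter. You instead prove a spectral surjectivity statement --- every prime filter of $D$ is the trace of a prime filter of $S$ --- by first separating $\mathbf{d}(F)$ from its complement in $\mathsf{E}(S)$ and then lifting via $P=(aE)^{\uparrow}$, and your verification of $P\cap D=F$ does go through (the compatible meet $a\wedge d=a\mathbf{d}(d)$ lies above some $f\in F$ once $\mathbf{d}(d)\in\mathbf{d}(F)$). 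What each approach buys: the paper's argument is entirely internal to machinery already proved and is the reason Lemma~\ref{lem:garl} was formulated as it was; yours isolates a clean extension lemma of independent interest (surjectivity of the restriction of spectra) at the cost of an extra Zorn/prime-filter-separation argument. Two small corrections: Remark~\ref{rem:lepen} concerns compatible \emph{meets}, not joins --- preservation of compatible joins by $\Phi$ comes from $\Phi$ being a morphism; and the distributive subalgebra hypothesis is not used "precisely" in the extension principle (your separation argument works for any distributive inverse subsemigroup) but is what makes the inclusion $D\hookrightarrow\mathsf{B}_{S}(D)$ a \emph{morphism}, without which $\Phi$ would not be well defined --- compare Example~\ref{ex:guppy}.
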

\begin{proof} By Theorem~\ref{them:bc} and its proof, there is a morphism $\gamma \colon \mathsf{B}(S) \rightarrow \mathsf{B}_{S}(D)$
given by
$$\gamma \left(  \bigcup_{i=1}^{m} V_{a_{i} ; b_{i}} \right) 
= 
 \bigvee_{i=1}^{m} a_{i} \setminus b_{i}$$
where $\{V_{a_{i};b_{i}} \colon 1 \leq i \leq m\}$ is a compatible subset of $\mathsf{B}(S)$.
From the construction of $\mathsf{B}_{S}(D)$ this morphism is surjective and so it just remains to prove that it is injective to prove the theorem.
The crux of the proof is to show that if
$$a \setminus b \leq  \bigvee_{i=1}^{m} a_{i} \setminus b_{i}$$
then
$$V_{a;b} \subseteq  \bigcup_{i=1}^{m} V_{a_{i} ; b_{i}}.$$ 
The important point to remember is that the first inequality holds in $S$ whereas the second in $D$.
Observe that $\mathbf{d}(a \setminus b) = \mathbf{d}(a) \setminus \mathbf{d}(b)$.
Now
$$(a_{i} \setminus b_{i})(\mathbf{d}(a) \setminus \mathbf{d}(b))
=
(a_{i}\mathbf{d}(a)) \setminus (a_{i}\mathbf{d}(b) \vee b_{i}\mathbf{d}(a)).$$
Put $a_{i}' = a_{i} \mathbf{d}(a)$
and 
$b_{i}' = \mathbf{d}(b) \vee b_{i}\mathbf{d}(a)$.
Observe that by our assumption $a_{i}',b_{i}' \in D$.
Thus
$$a \setminus b =  \bigvee_{i=1}^{m} a_{i}' \setminus b_{i}'.$$
By Lemma~\ref{lem:properties} and Lemma~\ref{lem:valery}, we may assume that
$b \leq b_{i}' \leq a_{i}' \leq a$.
We need to be careful about notation in what follows.
Let $s \in S$. I shall write $V_{s}^{S}$ for the set of all prime filters in $S$ that contain $s$.
By Lemma~\ref{lem:properties}, we have that
$$V_{a;b}^{S} =  \bigcup_{i=1}^{m} V_{a'_{i};b'_{i}}^{S}.$$
By Lemma~\ref{lem:garl} this translates into results about joins and compatible meets for elements of $B$ and so are equal to
joins and compatible meets in $D$ since $D$ is a distributive subalgebra of $S$.
Thus applying Lemma~\ref{lem:garl} in the opposite direction gives us $V_{a;b} =  \bigcup_{i=1}^{m} V_{a'_{i};b'_{i}}$.
But $V_{a_{i}';b_{i}'} \subseteq V_{a_{i};b_{i}}$ and from this our result follows.
\end{proof}

\section{Applications and examples}

\subsection{Representations of inverse semigroups in rings}

Observe that in this section, we deal with monoids;
the extension to semigroups is straightforward.

Marshall H. Stone, a functional analyst, became interested in Boolean algebras
through his work on the spectral theory of symmetric operators which in turn led to an interest in algebras of commuting projections.
Such algebras are naturally Boolean algebras: in fact, Stone proved that Boolean algebras and Boolean rings\footnote{A Boolean ring is a ring in which every element is an idempotent.
A simple exercise shows that such rings are always commutative.}
were two different ways of viewing the same class of structures \cite{Stone1936}.
Slightly more generally, Foster \cite{Foster} proved that the set of idempotents of any commutative ring was a Boolean algebra
when the following definitions were made: $e \vee f = e + f - ef$, $e \wedge f = e \cdot f$ and $e' = 1 - e$.
In this section, we shall be interested in inverse semigroups as subsemigroups of the multiplicative monoids of rings;
in particular, inverse semigroups as subsemigroups of the multiplicative monoids-with-involution of $C^{\ast}$-algebras.
We begin with a simple lemma.

\begin{lemma}\label{lem:dingbat} Let $S$ be an inverse submonoid (with zero) of the multiplicative monoid of a ring $R$.
Suppose that the following two conditions hold:
\begin{enumerate}
\item If $a,b \in S$ are orthogonal then $a + b \in S$.
\item If $e \in S$ is an idempotent then $1 - e \in S$.
\end{enumerate}
Then $S$ is a Boolean inverse monoid.
\end{lemma}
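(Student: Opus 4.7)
The plan is to verify condition (2) of Proposition~\ref{prop:definition}: that $S$ has all binary orthogonal joins (realized inside the ring as the sum), that multiplication distributes over these joins, and that $\mathsf{E}(S)$ is a Boolean algebra under the natural partial order. Because $S$ is an inverse monoid, idempotents of $S$ already commute, which is the crucial structural fact allowing Foster's formulas to be imported from the ring.

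First I would treat the idempotent semilattice. Fix $e,f \in \mathsf{E}(S)$; since $ef = fe$ we have that $ef \in \mathsf{E}(S)$, and this is the meet in the natural partial order. Using hypothesis (2), $1-e \in S$, so $f(1-e) = f - ef$ is an idempotent of $S$. It commutes with $e$ and satisfies $e\cdot(f-ef) = 0 = (f-ef)\cdot e$, so the two are orthogonal idempotents. By hypothesis (1), $e + (f - ef) = e + f - ef$ lies in $S$, and a direct calculation shows it is idempotent and the join of $e$ and $f$ in $\mathsf{E}(S)$. For any $g \leq e$ in $\mathsf{E}(S)$, the element $e(1-g) = e - g$ lies in $S$ by (2) and ring multiplication, is orthogonal to $g$, and their sum is $e$; one checks it is the relative complement $e \setminus g$. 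Thus each principal order ideal of $\mathsf{E}(S)$ is a unital Boolean algebra, so $\mathsf{E}(S)$ is a Boolean algebra in the sense used in this paper.

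Next I would promote this from idempotents to arbitrary orthogonal pairs. Let $a,b \in S$ with $a \perp b$; by (1), $a+b \in S$. A direct computation using $a^{-1}b = 0 = ab^{-1}$ (and their transposes) shows that $a^{-1}+b^{-1}$ inverts $a+b$ in the von~Neumann sense, so $(a+b)^{-1} = a^{-1}+b^{-1}$ in $S$, and hence $\mathbf{d}(a+b) = \mathbf{d}(a)+\mathbf{d}(b)$, $\mathbf{r}(a+b) = \mathbf{r}(a)+\mathbf{r}(b)$. From $(a+b)\mathbf{d}(a) = a\mathbf{d}(a)+b\mathbf{d}(a) = a + 0 = a$ we get $a \leq a+b$ in the natural partial order, and similarly $b \leq a+b$. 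If $c \in S$ is any upper bound, then $a = c\mathbf{d}(a)$, $b = c\mathbf{d}(b)$, and so
\[
c \cdot \mathbf{d}(a+b) = c\mathbf{d}(a) + c\mathbf{d}(b) = a+b,
\]
showing $a+b \leq c$. Therefore $a+b$ is genuinely the join $a \vee b$ in $S$.

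Finally, distributivity of multiplication over orthogonal joins is inherited from the ring: for $c \in S$, ring distributivity gives $c(a+b) = ca + cb$ and $(a+b)c = ac+bc$, and Lemma~\ref{lem:jersey} guarantees that $ca \perp cb$ and $ac \perp bc$, so both sums are orthogonal joins in $S$. Combining the three ingredients and invoking Proposition~\ref{prop:definition}, $S$ is a Boolean inverse semigroup; since $1 \in S$ it is in fact a Boolean inverse monoid.

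The main subtlety is the coincidence of the ring-theoretic sum with the order-theoretic join: one must confirm that $a+b$ really is the least upper bound of $a$ and $b$ in $S$ rather than merely an upper bound, which is why the identification $(a+b)^{-1} = a^{-1}+b^{-1}$ in $S$ and the resulting formula for $\mathbf{d}(a+b)$ are the technical crux. Everything else is then a matter of translating Foster's Boolean-algebra calculus on commuting idempotents of $R$ into the idempotent lattice of $S$, using (2) to stay inside $S$.
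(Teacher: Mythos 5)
Your proof is correct and follows essentially the same route as the paper: verify the hypotheses of Proposition~\ref{prop:definition} by showing that orthogonal joins are realized by ring sums, that $\mathsf{E}(S)$ is a Boolean algebra via $e \vee f = e + f - ef$ and complements supplied by $1-e$, and that multiplication distributes by ring distributivity. Your explicit verification that $a^{-1}+b^{-1}$ is the inverse of $a+b$ in $S$ (hence $\mathbf{d}(a+b)=\mathbf{d}(a)+\mathbf{d}(b)$) nicely makes precise a step the paper's proof leaves implicit, but the argument is otherwise the same.
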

\begin{proof} Let $e$ and $f$ be orthogonal idempotents in $S$.
We prove first that $e \vee f$ exists in $S$ and equals $e + f$.
Clearly, $e + f$ is an idempotent and belongs to $S$ by assumption.
Observe that $e(e + f) = e$ and $f(e + f) = f$.
Thus $e,f \leq e + f$.
Suppose that $e,f \leq i$, where $i$ is an idempotent in $S$.
Then $i(e + f) = ie + if = e + f$.
Thus $e + f \leq i$.
We have therefore proved that $e \vee f = e + f$.
Now let $a$ and $b$ be orthogonal elements of $S$.
We prove that $a \vee b$ exists in $S$ and is equal to $a + b$. 
Put $c = a + b$.
Then $ca^{-1}a = a$ and $cb^{-1}b = b$.
Thus $a,b \leq c$.
But $\mathbf{d}(c) = \mathbf{d}(a) + \mathbf{d}(b) = \mathbf{d}(a) \vee \mathbf{d}(b)$.
It follows that $a \vee b = a + b$.
We have therefore shown that $S$ has all binary orthogonal joins and multiplication distributes over such joins.

We now prove that $\mathsf{E}(S)$ is a Boolean algebra.
Let $e,f \in \mathsf{E}(S)$.
Define $e \circ f = e + f - ef$ but $e + f - ef = e(1-f) + f$
and $e(1-f)$ and $f$ are orthogonal.
It follows that $\mathsf{E}(S)$ is closed under the binary operation $\circ$.
Let $e$ and $f$ be arbitrary idempotents.
We prove that $e \vee f$ exists and equals $e \circ f$.
Observe that $e(e \circ f) = e$ and $f(e \circ f) = f$ so that $e,f \leq e \circ f$.
Now let $e,f \leq i$.
It is easy to check that $i(e \circ f) = e \circ f$.
Thus $e \circ f \leq i$.
We have therefore proved that $e \vee f = e \circ f$.
It is clear that $e(i \vee j) = ei \vee ej$
and it is easy to show that $e \vee (ij) = (e \vee i)(e \vee j)$.
It is now routine to prove that $\mathsf{E}(S)$ is a Boolean algebra.

The lemma now follows by an application of Proposition~\ref{prop:definition}. 
\end{proof}

Our first main result is a slight generalization of a construction to be found in \cite[pp~175--176, pp~190--193]{Paterson}
although our proof is completely algebraic and there is no appeal to \cite{Wordingham}.
In the proof below, the construction of $S'$ deals with part (2) of Lemma~\ref{lem:dingbat}
and that of $S''$ deals with part (1) of  Lemma~\ref{lem:dingbat}.

\begin{proposition}\label{prop:twix} Let $S$ be an inverse submonoid (with zero) of the multiplicative monoid of a ring $R$.
Then there is a Boolean inverse submonoid $S''$ such that $S \subseteq S'' \subseteq R$.
\end{proposition}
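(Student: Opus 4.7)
The plan is to construct $S''$ by two successive closures inside $R$ and then invoke Lemma~\ref{lem:dingbat}, mirroring the two-step split pointed out in the sentence preceding the proposition: the first closure arranges for condition~(2) of that lemma and the second arranges for condition~(1).

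First, define $S'$ to be the inverse submonoid of the multiplicative monoid of $R$ generated by $S$ together with $\{1-e : e \in \mathsf{E}(S)\}$. Each $1-e$ is automatically a ring-idempotent, and any two elements of $\{1-e\} \cup \mathsf{E}(S)$ commute because $\mathsf{E}(S)$ is commutative. The useful normal form comes from the identity $(1-e)s = s(1 - s^{-1}es)$ in $R$: since $es = \mathbf{r}(s)es$, one has $(1-e)s = s - es = s - ss^{-1}es = s(1 - s^{-1}es)$, with $s^{-1}es \in \mathsf{E}(S)$. Iterating this lets one write every element of $S'$ in the form $s \cdot p$ with $s \in S$ and $p$ a product of idempotents of the form $1-f$ with $f \in \mathsf{E}(S)$. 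From this normal form, closure under inverse-semigroup inverses and under products follows by routine manipulation, and condition~(2) of Lemma~\ref{lem:dingbat} is already visible on the seed idempotents $1-e$.

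Second, set $S''$ to be the collection of all finite ring sums $a_1 + \cdots + a_n$ with $a_i \in S'$ pairwise orthogonal (including the empty sum $0$). Condition~(1) then holds by definition. Inverse-closure is term-by-term, $(\sum a_i)^{-1} = \sum a_i^{-1}$, since orthogonality passes to inverse-semigroup inverses. Product-closure comes from distributing $(\sum a_i)(\sum b_j) = \sum_{i,j} a_i b_j$ and regrouping the cross-terms as new orthogonal sums of $S'$-elements, using that a product $(sp)(tq)$ of two normal-form elements rewrites as $st \cdot (t^{-1}pt) \cdot q$ after pushing $p$ across $t$ by the identity above. Condition~(2) for all of $S''$ reduces to the fact that the idempotents of $S''$ lie inside the commutative Boolean subring of $R$ generated by $\mathsf{E}(S) \cup \{1\}$ (Foster's theorem, recalled at the start of the section), so the ring-complement $1-f$ of any idempotent $f \in \mathsf{E}(S'')$ decomposes as an orthogonal sum of $S'$-elements inside $S''$.

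The main obstacle is the multiplicative closure of $S''$: pushing the idempotent factor of one $S'$-element past the inverse-semigroup factor of another introduces conjugates $t^{-1}pt$ whose orthogonality properties must then be tracked carefully against the other cross-terms. Most of the routine work is concentrated in this bookkeeping. Once it is carried out, Lemma~\ref{lem:dingbat} applies to $S''$ and delivers the required Boolean inverse submonoid with $S \subseteq S'' \subseteq R$.
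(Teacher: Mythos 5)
Your proposal is correct and follows essentially the same route as the paper's own proof: first adjoin the complements $1-e$ to form $S'$ (using the conjugation identity $s^{-1}(1-e)s = s^{-1}s(1-s^{-1}es)$ to get the normal form $s\cdot p$), then close under finite orthogonal sums to form $S''$, and finally verify the two hypotheses of Lemma~\ref{lem:dingbat}. The only cosmetic difference is that where you appeal to Foster's theorem to see that $1-f \in \mathsf{E}(S'')$ for $f \in \mathsf{E}(S'')$, the paper writes out the explicit identities $1-\bigl(e\prod_{i}(1-e_{i})\bigr) = (1-e) + e[e_{1},\ldots,e_{m}]$ and $1-\sum_{i}\mathbf{e}_{i} = \prod_{i}(1-\mathbf{e}_{i})$, which amounts to the same computation.
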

\begin{proof} Observe first that if $e$ is an idempotent then $1 - e$ is an idempotent and if $ef = fe$ then $e(1-f) = (1-f)e$.
Define
$$E' = \{e(1 - e_{1}) \ldots (1-e_{n}) \colon e, e_{1}, \ldots, e_{n} \in \mathsf{E}(S)\} \cup \mathsf{E}(S).$$
Then $E'$ is a commutative idempotent subsemigroup of $R$ containing $\mathsf{E}(S)$.
In addition, $E'$ is closed under conjugation by elements of $S$.
To prove this,
let 
$$\mathbf{e} = e(1-e_{1}) \ldots (1 - e_{m})$$ 
and  $s \in S$.
Then 
$$s^{-1} \mathbf{e} s = s^{-1}e(1-e_{1}) \ldots (1 - e_{m})s.$$
But
$$s^{-1}e(1-e_{1}) \ldots (1 - e_{m})s = s^{-1}et \cdot s^{-1}(1-e_{1})s \cdot \ldots \cdot s^{-1}(1-e_{m})s$$
whereas
$s^{-1}(1 - i)s = s^{-1}s - s^{-1}is = s^{-1}s(1 - s^{-1}is)$.
The claim now follows.

Put $S' = SE'$.
Let $\mathbf{s} = s e(1 - e_{1}) \ldots (1 - e_{m})$.
Then $\mathbf{s} = s (s^{-1}se)(1 - e_{1}) \ldots (1 - e_{m})$.
Thus we may assume, whenever convenient, that $e \leq s^{-1}s$.
Next, $e(1 - e_{1}) = e - ee_{1} = e - e(ee_{1}) = e(1 - ee_{1})$.
It follows that we may also assume, whenever convenient, that $e_{1}, \ldots, e_{m} \leq e$. 
We prove that $S'$ is an inverse semigroup with semilattice of idempotents $E'$.

First, we prove closure under multiplication.
Let $\mathbf{s} = se(1-e_{1}) \ldots (1 - e_{m})$ and $\mathbf{t} = tf(1 - f_{1}) \ldots (1 - f_{n})$.
Then $\mathbf{s} \mathbf{t} =  se(1-e_{1}) \ldots (1 - e_{m})tf(1 - f_{1}) \ldots (1 - f_{n})$.
Write $t = tt^{-1}t$.
Then
$\mathbf{s} \mathbf{t} = st[t^{-1}e(1-e_{1}) \ldots (1 - e_{m})t]f(1 - f_{1}) \ldots (1 - f_{n})$.
But we proved above that $E'$ is closed under conjugation by elements of $S$.
It follows that $S'$ is closed under multiplication.

Let $\mathbf{s} = se(1-e_{1}) \ldots (1 - e_{m})$ 
and define $\mathbf{s}^{-1} = e(1-e_{1}) \ldots (1 - e_{m})s^{-1}$.
Then $\mathbf{s}^{-1} = e(1-e_{1}) \ldots (1 - e_{m})s^{-1} = s^{-1}[se(1-e_{1}) \ldots (1 - e_{m})s^{-1}]$
and we now use the fact that $E'$ is closed under conjugation by elements of $S$.
It follows that if $\mathbf{s} \in S'$ then $\mathbf{s}^{-1} \in S'$.
It is easy to check that
$\mathbf{s} = \mathbf{s} \mathbf{s}^{-1} \mathbf{s}$
and 
$\mathbf{s}^{-1} = \mathbf{s}^{-1} \mathbf{s} \mathbf{s}^{-1}$.

Thus $S'$ is a regular semigroup.

To prove that $S'$ is inverse it is enough to prove that $\mathsf{E}(S') = E'$.
Let  $\mathbf{s} = se(1-e_{1}) \ldots (1 - e_{m})$ and suppose that $\mathbf{s}^{2} = \mathbf{s}$.
As we indicated above, we may assume that $e \leq s^{-1}s$ and that $e_{1}, \ldots, e_{m} \leq e$.
We prove that $\mathbf{s} \in E'$.
By assumption,
$$se(1-e_{1}) \ldots (1 - e_{m}) 
=
se(1-e_{1}) \ldots (1 - e_{m})se(1-e_{1}) \ldots (1 - e_{m}).$$
Thus multiplying this equation on the left by $s^{-1}$ we obtain
$$e(1-e_{1}) \ldots (1 - e_{m}) 
=
e(1-e_{1}) \ldots (1 - e_{m})se(1-e_{1}) \ldots (1 - e_{m}).$$
Now write $s = (ss^{-1})s$ and move the $ss^{-1}$ to the front to get
$$e(1-e_{1}) \ldots (1 - e_{m}) 
=
s[s^{-1}e(1-e_{1}) \ldots (1 - e_{m})s] e(1-e_{1}) \ldots (1 - e_{m}).$$
It follows from this equation that
$$e(1-e_{1}) \ldots (1 - e_{m}) 
=
[s^{-1}e(1-e_{1}) \ldots (1 - e_{m})s] e(1-e_{1}) \ldots (1 - e_{m}).$$
Thus 
$$e(1-e_{1}) \ldots (1 - e_{m}) 
=
se(1-e_{1}) \ldots (1 - e_{m})
=
\mathbf{s}.$$

We have therefore proved that $S'$ is an inverse semigroup in its own right.

Now define $S'' \subseteq R$ to be the set of all finite sums of orthogonal elements of $S'$.
If $\{a_{1}, \ldots, a_{m}\}$ and $\{b_{1}, \ldots, b_{n}\}$ are orthogonal subsets of an inverse semigroup
so too is $\{a_{1}b_{1}, \ldots a_{i}b_{j}, \ldots, a_{m}b_{n}\}$.
It follows that $S''$ is closed under multiplication.
If $\{a_{1},\ldots, a_{m}\}$ is an orthogonal subset of an inverse semigroup so too is $\{a_{1}^{-1},\ldots, a_{m}^{-1}\}$.
Thus if $a_{1} + \ldots + a_{m} \in S''$ then  $a_{1}^{-1} + \ldots + a_{m}^{-1} \in S''$.
Observe that 
$$(a_{1} + \ldots + a_{m})(a_{1}^{-1} + \ldots + a_{m}^{-1}) = a_{1}a_{1}^{-1} + \ldots + a_{m}a_{m}^{-1}$$
and
$$(a_{1}^{-1} + \ldots + a_{m}^{-1})(a_{1} + \ldots + a_{m}) = a_{1}^{-1}a_{1} + \ldots + a_{m}^{-1}a_{m}$$
and so
$(a_{1} + \ldots + a_{m})(a_{1}^{-1} + \ldots + a_{m}^{-1})(a_{1} + \ldots + a_{m})
=
a_{1} + \ldots + a_{m}$.
We have therefore shown that $S''$ is a regular semigroup.
To show that $S''$ is inverse, it is enough to prove that the idempotents in $S''$
are precisely the elements of the form $e_{1} + \ldots + e_{m}$ where $e_{1}, \ldots, e_{m}$
are idempotents in $S'$ and form an orthogonal subset.
Suppose that $\sum_{i=1}^{m} a_{i}$ is an idempotent in $S''$ where $\{a_{1}, \ldots, a_{m}\}$ is an orthogonal subset of $S'$.
Then
$$\left( \sum_{i=1}^{m} a_{i} \right)^{2} = \sum_{i=1}^{m} a_{i}.$$
Multiply both sides of this equation on the left by $a_{1}a_{1}^{-1}$.
Then
$$a_{1} = a_{1}^{2} + a_{1}a_{2} + \ldots + a_{1}a_{m}.$$
Now multiply both sides of this equation on the right by $a_{1}^{-1}a_{1}$.
It follows that $a_{1} = a_{1}^{2}$.
By symmetry, it follows that each of the elements $a_{1}, \ldots, a_{m}$ is an idempotent.
Thus $a_{1} + \ldots + a_{m}$ is an idempotent.
We have therefore proved that $S''$ is an inverse monoid with zero.

By Lemma~\ref{lem:dingbat}, to prove that $S''$ is a Boolean inverse monoid
it is enough to prove that $\mathsf{E}(S'')$ is closed under the operation $\mathbf{e} \mapsto 1 - \mathbf{e}$.
Referring back to the proof of Lemma~\ref{lem:dingbat}, we see that if $e,f \in \mathsf{E}(S)$ then
$e \circ f = e(1-f) + f$ which is an element of $\mathsf{E}(S'')$.
Let $e_{1}, \ldots, e_{m} \in \mathsf{E}(S)$.
Define $[e_{1}, \ldots, e_{m}] = (\ldots ((e_{1} \circ e_{2}) \circ e_{3}) \ldots \circ e_{m})$;
in other words, associate to the left.
Then  $[e_{1}, \ldots, e_{m}] \in \mathsf{E}(S'')$.
We now have the following two identities.
Let $e_{1}, \ldots, e_{m} \in \mathsf{E}(S)$.
Then
$$1 - \left(e \prod_{i=1}^{m}(1 - e_{i})\right) = (1 - e) +e[e_{1}, \ldots, e_{m}].$$
This can be proved from the following identity
$$\prod_{i=1}^{m}(1 - e_{i}) = 1 - [e_{1}, \ldots, e_{m}],$$
which can be proved by induction.
Let $\mathbf{e}_{1}, \ldots, \mathbf{e}_{m}$ be an orthogonal set of elements in $\mathsf{E}(S')$.
Then
$$1 - \left( \sum_{i=1}^{m} \mathbf{e}_{i} \right) = \prod_{i=1}^{m} (1 - \mathbf{e}_{i}),$$
where the proof is straightforward.
\end{proof}

If $S \subseteq R$ define $\mathsf{B}_{R}(S) = S''$ above which we call the {\em Booleanization of $S$ in $R$}.

Wehrung \cite[Theorem 6-1.7]{Wehrung2} gives an alternative construction in the case of rings with involution
which can be traced back to the work of Renault \cite{Renault, Renault2} on Cartan subalgebras of $C^{\ast}$-algebras as well as Kumjian \cite{K2}.
This is more adapted to representations of inverse semigroups by partial isometries in $C^{\ast}$-algebras which we shall return
to at the end of this section.

Let $R$ be a unital ring and let $S$ be an inverse monoid with zero.
A {\em representation} of $S$ in $R$ is a homomorphism of monoids $\theta \colon S \rightarrow R$
which maps zero to zero.
The image of $\theta$ is an inverse semigroup and so there is a Boolean inverse monoid $\mathsf{B}_{R}(\mbox{\rm im}(\theta)) \subseteq R$.
Thus, by restricting the codomain of $\theta$ and by a mild abuse of notation,
$\theta \colon S \rightarrow \mathsf{B}_{R}(\mbox{\rm im}(\theta))$.
By Theorem~\ref{them:bc}, there is a morphism $\theta^{\ast} \colon \mathsf{B}(S) \rightarrow \mathsf{B}_{R}(\mbox{\rm im}(\theta))$.
Thus, by extending the codomain of $\theta^{\ast}$ and by a mild abuse of notation, $\theta^{\ast} \colon \mathsf{B}(S) \rightarrow R$.
However, $\theta^{\ast}$ has the additional property that if $a,b \in  \mathsf{B}(S)$ are orthogonal then
$\theta^{\ast}(a \vee b) = \theta^{\ast}(a) + \theta^{\ast}(b)$.
Let $S$ be a Boolean inverse monoid and $R$ a ring.
Then a representation $\phi \colon S \rightarrow R$ is called an {\em additive representation}
if $a \perp b$ in $S$ implies that $\phi (a \vee b) = \phi (a) + \phi (b)$.
We have therefore proved the following.

\begin{proposition}\label{prop:swell} Let $\theta \colon S \rightarrow R$ be a
representation of the inverse monoid $S$ in the unital ring $R$.
Then there is a unique additive representation $\theta^{\ast} \colon \mathsf{B}(S) \rightarrow R$
such that $\theta^{\ast} \beta = \theta$.
\end{proposition}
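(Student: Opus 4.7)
The plan is to unpack the construction sketched in the paragraph preceding the statement and then separately address uniqueness. For existence, I would observe that by Proposition~\ref{prop:twix} the image $\mbox{\rm im}(\theta) \subseteq R$ is contained in a Boolean inverse submonoid $\mathsf{B}_{R}(\mbox{\rm im}(\theta))$ of $R$, so after restriction of codomain $\theta$ becomes a zero-preserving monoid homomorphism from $S$ into a Boolean inverse monoid. Theorem~\ref{them:bc} then supplies a unique morphism of Boolean inverse semigroups $\theta^{\ast} \colon \mathsf{B}(S) \to \mathsf{B}_{R}(\mbox{\rm im}(\theta))$ satisfying $\theta^{\ast} \beta = \theta$; composing with the inclusion $\mathsf{B}_{R}(\mbox{\rm im}(\theta)) \hookrightarrow R$ produces the required map.

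To verify additivity of $\theta^{\ast}$ when regarded as a map into $R$, I would read off from the proof of Lemma~\ref{lem:dingbat} that in the Boolean inverse monoid $\mathsf{B}_{R}(\mbox{\rm im}(\theta))$ the binary orthogonal join of $x$ and $y$ is literally the ring sum $x + y$. Since $\theta^{\ast}$, being a morphism of distributive inverse semigroups, preserves binary compatible joins and in particular orthogonal joins, whenever $a \perp b$ in $\mathsf{B}(S)$ one has $\theta^{\ast}(a \vee b) = \theta^{\ast}(a) \vee \theta^{\ast}(b)$ in $\mathsf{B}_{R}(\mbox{\rm im}(\theta))$, which equals $\theta^{\ast}(a) + \theta^{\ast}(b)$ in $R$.

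The remaining point is uniqueness of $\theta^{\ast}$ among additive representations, and this is where I expect the main work. By the explicit construction of $\mathsf{B}(S)$, every element has the form $\bigcup_{i=1}^{m} V_{a_{i};b_{i}}$ with $b_{i} < a_{i}$ in $S$ and $V_{a_{i};b_{i}} = \beta(a_{i}) \setminus \beta(b_{i})$; by Lemma~\ref{lem:gove} such a compatible join may be rewritten as an orthogonal one. If $\phi \colon \mathsf{B}(S) \to R$ is any additive representation with $\phi \beta = \theta$, the orthogonal decomposition $\beta(a) = \beta(b) \vee (\beta(a) \setminus \beta(b))$ together with additivity forces $\phi(\beta(a) \setminus \beta(b)) = \theta(a) - \theta(b)$; a second application of additivity then pins down $\phi$ on every orthogonal join of such differences, so $\phi = \theta^{\ast}$. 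The main conceptual point throughout is the identification of Boolean orthogonal joins with ring sums inside $\mathsf{B}_{R}(\mbox{\rm im}(\theta))$, which is already built into the construction of Proposition~\ref{prop:twix}; once that is in hand, everything else follows formally from the universal property of Theorem~\ref{them:bc}.
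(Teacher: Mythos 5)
Your existence and additivity arguments coincide with the paper's: the proof given there is precisely the paragraph preceding the statement, namely restrict the codomain of $\theta$ to the Boolean inverse submonoid $\mathsf{B}_{R}(\mbox{\rm im}(\theta))$ supplied by Proposition~\ref{prop:twix}, apply Theorem~\ref{them:bc}, and observe that by the construction in Proposition~\ref{prop:twix} (and the computation in Lemma~\ref{lem:dingbat}) orthogonal joins in $\mathsf{B}_{R}(\mbox{\rm im}(\theta))$ are literally ring sums. The paper does not spell out uniqueness at all, so your last paragraph is a genuine addition rather than a reproduction, and it is almost right but one step is underspecified: a general element of $\mathsf{B}(S)$ is a \emph{compatible}, not orthogonal, union $\bigcup_{i} V_{a_{i};b_{i}}$, and when you orthogonalize via Lemma~\ref{lem:gove} the pieces $V_{a_{i};b_{i}} \setminus (V_{a_{i};b_{i}} \wedge V_{a_{j};b_{j}})$ are in general no longer of the form $V_{a;b}$, so additivity does not directly determine $\phi$ on them from the values you have computed. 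The repair is short: additivity applied to the two orthogonal decompositions $x \vee y = (x \setminus (x \wedge y)) \vee y$ and $x = (x \setminus (x \wedge y)) \vee (x \wedge y)$ gives $\phi(x \vee y) = \phi(x) + \phi(y) - \phi(x \wedge y)$ for compatible $x,y$, and the compatible meet $x \wedge y = xy^{-1}y$ of two elements of $\mathsf{V}(S)$ lies again in the inverse subsemigroup $\mathsf{V}(S)$ by Lemma~\ref{lem:benn} and part (2) of Lemma~\ref{lem:macron}; an inclusion--exclusion induction then determines $\phi$ on every finite compatible union of sets $V_{a;b}$, completing the uniqueness argument. With that one observation added, your proposal is correct and, where it overlaps with the paper, follows the same route.
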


The case where the ring is actually a $C^{\ast}$-algebra is of particular interest.
There are then some minor modifications to the definitions.
Let $S$ be an inverse semigroup and $C$ a $C^{\ast}$-algebra.
A representation $\theta \colon S \rightarrow C$ is a {\em $\ast$-representation} if $\theta (s^{-1}) = \theta (a)^{\ast}$.
This implies that $S$ is being represented by partial isometries in the $C^{\ast}$-algebra.
The following is almost immediate from the above calculations with obvious ammendments.

\begin{proposition}\label{lem:sue} Let $\theta \colon S \rightarrow C$ be a $\ast$-representation of the inverse semigroup $S$ in the $C^{\ast}$-algebra $C$.
Then there is a unique additive $\ast$-representation $\Theta \colon \mathsf{B}(S) \rightarrow C$ such that $\theta = \Theta \beta$.
\end{proposition}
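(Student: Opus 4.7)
The plan is to mimic the proof of Proposition~\ref{prop:swell} almost verbatim, amending it only to carry the $\ast$-involution along. If $C$ is non-unital, I would pass to the minimal unitisation $\tilde{C}$ so that the construction of Proposition~\ref{prop:twix} applies; the monoid-versus-semigroup distinction is harmless here because the unit needed in that construction belongs to the ambient algebra rather than to $S$, and the elements ultimately produced will lie in $C$ itself.

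The first task is to check that the hull $\mathsf{B}_{\tilde{C}}(\theta(S))$ is stable under the $C^\ast$-involution and consists of partial isometries. Since $\theta(s^{-1}) = \theta(s)^\ast$ by hypothesis, and hence $\theta(e)$ is idempotent and self-adjoint for every idempotent $e\in\mathsf{E}(S)$, the image $\theta(S)$ is a $\ast$-closed set of partial isometries whose idempotents are orthogonal projections. Products and complements of commuting projections remain projections, so each element of the form $s\,e(1-e_1)\cdots(1-e_m)$ produced in the construction of $S'$ is a partial isometry; because $\theta(S)$ is $\ast$-closed, so is $S'$. Orthogonal sums preserve both partial isometricity and $\ast$-closure via $(a+b)^\ast = a^\ast + b^\ast$, so $S'' = \mathsf{B}_{\tilde{C}}(\theta(S))$ is a Boolean inverse $\ast$-semigroup of partial isometries; expanding each product $s\,e(1-e_1)\cdots(1-e_m)$ shows its elements in fact lie in $C$.

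With this in place, Theorem~\ref{them:bc} applied to $\theta\colon S\to \mathsf{B}_{\tilde{C}}(\theta(S))$ yields a unique morphism $\Theta\colon\mathsf{B}(S)\to \mathsf{B}_{\tilde{C}}(\theta(S))\subseteq C$ with $\Theta\beta = \theta$. Additivity on orthogonal joins is automatic, exactly as in Proposition~\ref{prop:swell}, because the orthogonal join in $\mathsf{B}_{\tilde{C}}(\theta(S))$ is just addition. The $\ast$-property follows because $\Theta$ is an inverse semigroup homomorphism, so $\Theta(a^{-1}) = \Theta(a)^{-1}$ inside $\mathsf{B}_{\tilde{C}}(\theta(S))$, and on partial isometries the inverse semigroup inverse coincides with the $C^\ast$-algebraic adjoint. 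Uniqueness is immediate: any additive $\ast$-representation extending $\theta$ must send an orthogonal join $\bigvee_i a_i$ to $\sum_i \Theta(a_i)$, and $\beta(S)$ together with finite orthogonal joins generates $\mathsf{B}(S)$.

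The only real obstacle is the non-unital bookkeeping, since Proposition~\ref{prop:twix} relies on a unit to form $1-e$. I would resolve this by working inside $\tilde{C}$ throughout the construction of the hull and then verifying, by multiplying out the defining products, that the generators of $\mathsf{B}_{\tilde{C}}(\theta(S))$ all lie in the subalgebra $C$. Everything else is routine $\ast$-algebraic verification of the sort already handled by Proposition~\ref{prop:twix} and Proposition~\ref{prop:swell}.
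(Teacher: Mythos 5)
Your proposal is correct and follows exactly the route the paper intends: the paper's own ``proof'' is just the assertion that the result is immediate from Proposition~\ref{prop:twix}, Proposition~\ref{prop:swell} and Theorem~\ref{them:bc} ``with obvious ammendments,'' and your write-up supplies precisely those amendments (unitisation bookkeeping, $\ast$-closure of the hull so that the inverse-semigroup inverse agrees with the adjoint, and the resulting automatic $\ast$-property of $\Theta$). The only cosmetic imprecision is the remark that $\beta(S)$ and orthogonal joins generate $\mathsf{B}(S)$ --- one also needs the relative complements $V_{a;b}$, but their images are forced by additivity since $V_a = V_b \vee V_{a;b}$ is an orthogonal join, so uniqueness still goes through as you say.
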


Let $S$ be an inverse semigroup with zero.
We can construct the {\em contracted semigroup algebra} $\mathbb{C}_{0}S$.
The following is a re-interpretation of what Paterson proves.
It is proved by combining \cite[Proposition 4.4.3]{Paterson} with Theorem~\ref{them:paterson}.

\begin{theorem}\label{them:russia} Let $S$ be an inverse semigroup with zero.
Then $\mathsf{B}(S) \cong \mathsf{B}_{\mathbb{C}_{0}S}(S)$.
\end{theorem}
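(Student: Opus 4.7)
The goal is to identify the abstract Booleanization $\mathsf{B}(S)$ with the concrete Boolean hull $\mathsf{B}_{\mathbb{C}_{0}S}(S) \subseteq \mathbb{C}_{0}S$. My plan is to route both sides through Paterson's universal groupoid $\mathsf{G}_{u}(S)$ and compose the resulting isomorphisms, rather than attempt a direct universal-property argument inside the semigroup algebra.

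For the first leg, Theorem~\ref{them:paterson} already supplies a canonical isomorphism $\mathsf{B}(S) \cong \mathsf{KB}(\mathsf{G}_{u}(S))$, so nothing further is needed on that side. For the second leg, I would invoke \cite[Proposition~4.4.3]{Paterson}, suitably read in the pointed, zero-preserving setting used in this paper. The inverse monoid Paterson denotes by $S''$ inside the contracted semigroup algebra is, verbatim, the monoid constructed in Proposition~\ref{prop:twix}: one first adjoins complements $1-e$ of idempotents, then closes under orthogonal sums. In our notation this is exactly $\mathsf{B}_{\mathbb{C}_{0}S}(S)$. Paterson's proposition identifies it with $\mathsf{KB}(\mathsf{G}_{u}(S))$ by sending the basic compact-open bisection $U_{a;a_{1},\ldots,a_{m}}$ from Section~2.4 to the ring element $a(1 - \mathbf{d}(a_{1})) \cdots (1 - \mathbf{d}(a_{m}))$, after the usual normalisation that permits us to assume $a_{i} \leq a$.

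Composing the two isomorphisms yields $\mathsf{B}(S) \cong \mathsf{B}_{\mathbb{C}_{0}S}(S)$, and one only needs to observe that the composite identifies $\beta(s)$ with $s \in \mathbb{C}_{0}S$, so that the canonical embeddings of $S$ into each side are respected. The hard content lies not in the composition but in the second-leg identification: one must show that the evident surjection $\mathsf{KB}(\mathsf{G}_{u}(S)) \twoheadrightarrow \mathsf{B}_{\mathbb{C}_{0}S}(S)$ is injective, which inside $\mathbb{C}_{0}S$ reduces to a linear-independence statement for the elements $a(1-e_{1})\cdots(1-e_{m})$ as $a$ ranges over $S$ and the $e_{i}$ over idempotents of $S$ below $\mathbf{d}(a)$. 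This is precisely the technical step carried out in Paterson's proof, and the reason it is imported wholesale here rather than re-derived from scratch. Modulo that appeal, the theorem is a one-line composition.
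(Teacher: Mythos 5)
Your proposal is correct and follows exactly the route the paper itself takes: the paper's entire proof consists of the remark that the theorem ``is proved by combining \cite[Proposition 4.4.3]{Paterson} with Theorem~\ref{them:paterson}'', which is precisely your two-leg composition through $\mathsf{KB}(\mathsf{G}_{u}(S))$. Your additional commentary on identifying Paterson's $S''$ with the construction of Proposition~\ref{prop:twix} and on where the real technical content (the linear-independence step inside $\mathbb{C}_{0}S$) resides is accurate and, if anything, more explicit than the paper.
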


\subsection{The Booleanization of the polycyclic monoids $P_{n}$}

In this section, we shall carry out an explicit computation of the Booleanization of an important family of inverse semigroups.
Our computations will rely on the perspective provided by Section~2.4.
Our inverse semigroups will actually be monoids, but Remarks~\ref{rem:monoid} and \ref{rem:more-monoid} tell us that
this will be taken care of in our construction.\footnote{A key part of our computation is the description of the Stone space of the set of all reverse definite languages over a fixed alphabet.
This space is actually described in \cite{Pin2017} though for completely different reasons from ours.}

Let $A = \{a_{1}, \ldots, a_{n}\}$, where $n \geq 2$ and finite, and 
denote by $A^{\ast}$ the free monoid generated by $A$ with concatenation as its multiplication.
Any subset of $A^{\ast}$ is called a {\em language (over $A$)}.
If $x,y \in A^{\ast}$ we write $x \leq_{p} y$ if and only if $x = yz$ for some string $z$.
We say that $y$ is a {\em prefix} of $x$.
We call $\leq_{p}$ the {\em prefix ordering}.
A {\em prefix code} is a set of finite strings which are pairwise $\leq_{p}$-incomparable.
A maximal prefix code is a prefix code that cannot be a proper subset of another prefix code.
A subset $R \subseteq A^{\ast}$ is called a {\em right ideal} if $RA^{\ast} \subseteq R$.
For each right ideal $R$, there is a unique prefix code $P$ such that $R = PA^{\ast}$ \cite[Lemma A.1]{Birget}.
We say that $R$ is {\em finitely generated} if $P$ is finite.
The intersection of two finitely generated right ideals of $A^{\ast}$ is also finitely generated \cite[Lemma~3.3]{Birget}.
A right ideal $R$ is said to be {\em essential} if $R$ has a non-empty intersection with every right ideal.
The essential right ideals are precisely those right ideals $PA^{\ast}$ where $P$ is a maximal prefix code \cite[Lemma A.1]{Birget}. 
The following result is important.

\begin{lemma}\label{lem:cornwall} Let $R = PA^{\ast}$ be a finitely generated right ideal of $A^{\ast}$.
Then $R$ is essential if and only if $A^{\ast} \setminus R$ is finite.
\end{lemma}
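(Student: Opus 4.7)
The plan is to unpack both sides of the equivalence in terms of the prefix relation. First I would observe that by definition $w \in PA^{\ast}$ if and only if some element of $P$ is a prefix of $w$, so
\[
A^{\ast} \setminus PA^{\ast} \;=\; \{\,w \in A^{\ast} : \text{no } p \in P \text{ is a prefix of } w\,\}.
\]
Next, I would invoke the fact, stated in the excerpt, that $R = PA^{\ast}$ is essential precisely when $P$ is a maximal prefix code. Thus the lemma amounts to: a finite prefix code $P$ is maximal if and only if the set of strings having no prefix in $P$ is finite.

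For the forward direction, assume $P$ is maximal. Let $w \in A^{\ast} \setminus PA^{\ast}$, so no element of $P$ is a prefix of $w$. If in addition $w$ were not a prefix of any element of $P$, then $w$ would be $\leq_{p}$-incomparable with every element of $P$, whence $P \cup \{w\}$ would be a prefix code properly containing $P$, contradicting maximality. Hence $w$ is a prefix of some element of $P$, and in particular $|w| \leq M$ where $M = \max_{p \in P}|p|$ (which exists because $P$ is finite). Since $A$ is a finite alphabet there are only finitely many strings of length at most $M$, and so $A^{\ast} \setminus PA^{\ast}$ is finite.

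For the converse, I would argue contrapositively: suppose $P$ is not maximal, so there exists $w \in A^{\ast}$ that is $\leq_{p}$-incomparable with every element of $P$. Then for any $v \in A^{\ast}$ and any $p \in P$, $p$ cannot be a prefix of $wv$: comparing lengths, either $|p| \leq |w|$, forcing $p$ to be a prefix of $w$, or $|p| > |w|$, forcing $w$ to be a prefix of $p$ — both ruled out by incomparability. Hence $wA^{\ast} \subseteq A^{\ast} \setminus PA^{\ast}$, and the latter is infinite because $A$ is nonempty.

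I do not expect any serious obstacle here; the only mild subtlety is the length-comparison argument in the converse, which must be done carefully because of the slightly nonstandard convention $x \leq_{p} y \iff x = yz$ (so that the ``longer'' string sits below the ``shorter'' one in $\leq_{p}$). Once that is kept straight, both directions are short combinatorial verifications.
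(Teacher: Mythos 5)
Your proof is correct, but it takes a different route from the paper's in both directions. For the forward implication the paper simply cites Birget's Lemma A1(4) to conclude that the complement of $PA^{\ast}$ is finite when $P$ is a finite maximal prefix code; you instead prove this from scratch by showing that any $w \notin PA^{\ast}$ must be a proper prefix of some element of $P$ (else $P \cup \{w\}$ would be a strictly larger prefix code), so that $|w|$ is bounded by $\max_{p \in P}|p|$ and finiteness of the alphabet finishes the argument. This makes the lemma self-contained, at the modest cost of leaning on the stated equivalence between essentiality of $PA^{\ast}$ and maximality of $P$. For the converse the paper argues directly from the definition of essential: if $A^{\ast} \setminus R$ is finite then the infinite set $xA^{\ast}$ cannot be disjoint from $R$ for any string $x$, so $R$ meets every nonzero right ideal. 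You instead pass through the maximal-prefix-code characterization again and argue contrapositively, exhibiting a whole infinite right ideal $wA^{\ast}$ inside the complement when $P$ fails to be maximal; your length-comparison argument ruling out $p$ being a prefix of $wv$ is carried out correctly under the paper's convention for $\leq_{p}$. Both approaches are valid; the paper's converse is slightly slicker because it never mentions prefix codes, while yours gives more structural information (the complement actually contains a full right ideal when $R$ is not essential).
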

\begin{proof} Suppose first that $R$ is essential.
Then $P$ is a finite maximal prefix code.
By \cite[Lemma~A1(4)]{Birget}, it is immediate that $A^{\ast} \setminus PA^{\ast}$ is a finite set.
Conversely, suppose that $A^{\ast} \setminus R$ is finite.
Let $x$ be any finite string.
The set $xA^{\ast}$ is infinite so cannot be disjoint from the set $R$ since the complement of $R$ is finite.
It follows that $xA^{\ast} \cap R \neq \varnothing$.
\end{proof}

Let $R_{1}$ and $R_{2}$ be right ideals of $A^{\ast}$.
A {\em morphism} $\theta \colon R_{1} \rightarrow R_{2}$ is a function such that $\theta (xu) = \theta (x)u$ for all $x \in R_{1}$ and $u \in A^{\ast}$.
Let $\theta \colon P_{1}A^{\ast} \rightarrow P_{2}A^{\ast}$ be an ismorphism where $P_{1}$ and $P_{2}$ are prefix codes.
Then the restriction of $\theta$ to $P_{1}$ induces a bijection $\mathsf{T}(\theta) \colon P_{1} \rightarrow P_{2}$ called the {\em table} of $\theta$.
More generally, any bijection from $P_{1}$ to $P_{2}$ is called a {\em table}.
There is a bijection between isomorphisms $P_{1}A^{\ast} \rightarrow P_{2}A^{\ast}$ and tables $P_{1} \rightarrow P_{2}$.

It is now easy to show that the subset $R(A^{\ast})$ of $I(A^{\ast})$ consisting of all isomorphisms 
between the finitely generated right ideals of $A^{\ast}$ is a distributive inverse monoid.

The {\em polycyclic monoid} $P_{n}$, where $n \geq 2$,  is the monoid with zero given by the following monoid presentation
$$P_{n} = \langle a_{1}, \ldots, a_{n}, a_{1}^{-1}, \ldots, a_{n}^{-1} \colon a_{i}^{-1}a_{i} = 1 \text{ and } a_{i}^{-1}a_{j} = 0 \text{ if } i \neq j \rangle.$$
We refer the reader to \cite{Law2007} for all the details and \cite[Section~9.3]{Law1} (though beware that the notation is slightly different in the latter).
The goal of this section is to compute $\mathsf{B}(P_{n})$.
The first step is to compute the distributive completion of $P_{n}$.
This was, in fact accomplished in \cite{Law2007}, 
where we proved that $\mathsf{D}(P_{n})$ is isomorphic to the inverse monoid of right ideal isomorphisms between the finitely generated right ideals of $A^{\ast}$.
In fact, we proved a slightly different theorem there so we shall first explain why in the case of the polycyclic inverse monoids it accomplishes what we claim.
There, we in fact constructed {\em orthogonal completions}.
However, polycyclic inverse monoids have a special property that ensures orthogonal completions and distributive completions are the same thing;
an inverse semigroup $S$ is said to be {\em ramified}\footnote{We have borrowed this terminology from \cite{Heindorf}.} if for $a,b,c \in S$  $a \leq b,c$ implies that $b \leq c$ or $c \leq b$.

\begin{lemma}\label{lem:ramified} 
The polycyclic inverse monoids are ramified.
\end{lemma}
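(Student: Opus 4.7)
My plan is to use the standard normal form for elements of $P_n$ together with the fundamental prefix-comparability property of words in a free monoid.

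First I would recall that every non-zero element of $P_{n}$ has a unique normal form $yx^{-1}$ with $x,y\in A^{\ast}$, and under the standard representation of $P_{n}$ as partial bijections of $A^{\ast}$ the element $yx^{-1}$ corresponds to the map $xA^{\ast}\to yA^{\ast}$, $xw\mapsto yw$. From this I would extract the description of the natural partial order: for non-zero elements,
\[
yx^{-1}\leq vu^{-1}\quad\Longleftrightarrow\quad\exists\,z\in A^{\ast}\text{ such that }x=uz\text{ and }y=vz.
\]
(Restriction of partial bijections translates directly into the existence of a common ``suffix'' $z$.)

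Next, assume $a\leq b$ and $a\leq c$ with all three non-zero; the case $a=0$ is trivial because $0$ is below every element but the statement is vacuous when $b$ or $c$ is zero, and otherwise one of $b,c$ being zero forces $a=0$. Write $a=yx^{-1}$, $b=vu^{-1}$, $c=tq^{-1}$. The order description then gives $z_{1},z_{2}\in A^{\ast}$ with $x=uz_{1}=qz_{2}$ and $y=vz_{1}=tz_{2}$.

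The key step is now a single appeal to the free monoid: whenever $uz_{1}=qz_{2}$ in $A^{\ast}$, one of $u,q$ is a prefix of the other. Without loss of generality $u=qw$ for some $w\in A^{\ast}$, and then $qwz_{1}=qz_{2}$ forces $z_{2}=wz_{1}$. Substituting into $vz_{1}=tz_{2}$ gives $v=tw$. Hence $u=qw$ and $v=tw$ with a common $w$, which by the order description above means $b=vu^{-1}\leq tq^{-1}=c$. The symmetric case $q=uw'$ yields $c\leq b$. Thus $b$ and $c$ are $\leq$-comparable, as required.

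There is essentially no obstacle beyond keeping the normal forms straight; the whole argument reduces to the prefix-comparability property of $A^{\ast}$, and the translation between the partial order on $P_{n}$ and the prefix order on $A^{\ast}$ is routine.
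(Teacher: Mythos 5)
Your proof is correct and is essentially the paper's own argument: both write $a\leq b,c$ as $(y,x)=(v,u)z_{1}=(t,q)z_{2}$, invoke prefix-comparability in the free monoid to extract a common right factor $w$, and conclude $b\leq c$ or $c\leq b$. The only cosmetic difference is that you compare the second coordinates $u,q$ where the paper compares the first coordinates, and you additionally spell out the (trivial) zero cases.
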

\begin{proof} Suppose that $yx^{-1} \leq vu^{-1}, zw^{-1}$.
Then $(y,x) = (v,u)p = (z,w)q$ for some $p,q \in A^{\ast}$.
Then $y = vp = zq$ and $x = up = wq$.
The strings $v$ and $z$ are prefix comparable.
Without loss of generality suppose that $v = zr$ for some string $r$.
Then $q = rp$ and $u = wr$.
Thus $(v,u) = (z,w)r$ and so $v^{-1}u \leq zw^{-1}$.
\end{proof}

The significance of being ramified is explained by the following lemma.

\begin{lemma}\label{lem:orthogonal} Let $S$ be a ramified inverse semigroup.
Let $A = \{a_{1}, \ldots, a_{n}\}^{\downarrow}$ be any finitely generated compatible order ideal.
Then $A = \{b_{1}, \ldots, b_{m} \}^{\downarrow}$ where $\{b_{1}, \ldots, b_{m} \}$ is an orthogonal set.
\end{lemma}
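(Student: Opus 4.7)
The plan is to first reduce to the case where the generating set is an antichain with respect to the natural partial order, and then to exploit the ramified hypothesis to show that incomparable compatible elements must in fact be orthogonal.

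First I would observe that if $a_i \leq a_j$ for some $i \neq j$, then $a_i$ can be removed from the generating set without changing the order ideal $A$; indeed $a_i \in a_j^{\downarrow}$ and so $a_i^{\downarrow} \subseteq a_j^{\downarrow}$. Iterating, we may assume after relabelling that $\{b_1, \ldots, b_m\} \subseteq \{a_1, \ldots, a_n\}$ is an antichain (no $b_i$ lies below another) with the same downward closure as the original set. We may also discard any zero element, so without loss of generality every $b_i$ is nonzero.

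The heart of the argument is the following claim: any two incomparable compatible nonzero elements in a ramified inverse semigroup are orthogonal. Fix $i \neq j$. Since $\{b_1, \ldots, b_m\}$ is a compatible subset of the original compatible set, we have $b_i \sim b_j$, and so by part~(1) of Lemma~\ref{lem:macron} the meet $b_i \wedge b_j$ exists and satisfies $\mathbf{d}(b_i \wedge b_j) = \mathbf{d}(b_i)\mathbf{d}(b_j)$. Suppose for contradiction that $b_i \wedge b_j \neq 0$. Then $b_i \wedge b_j$ is a nonzero element below both $b_i$ and $b_j$, so the ramified hypothesis forces $b_i \leq b_j$ or $b_j \leq b_i$, contradicting our antichain assumption. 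Hence $b_i \wedge b_j = 0$, and therefore $\mathbf{d}(b_i)\mathbf{d}(b_j) = 0$, i.e.\ $\mathbf{d}(b_i) \perp \mathbf{d}(b_j)$. Since $b_i \sim b_j$, Lemma~\ref{lem:buffs} then yields $b_i \perp b_j$, as required.

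The only step that requires any real care is the reduction to the antichain case together with the observation that the ramified condition is being applied to a \emph{nonzero} lower bound (which is exactly the nontrivial content used in Lemma~\ref{lem:ramified}); everything else is bookkeeping using Lemma~\ref{lem:macron} and Lemma~\ref{lem:buffs}. Thus $A = \{b_1, \ldots, b_m\}^{\downarrow}$ with $\{b_1, \ldots, b_m\}$ pairwise orthogonal, proving the lemma.
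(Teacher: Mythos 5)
Your proof is correct and follows essentially the same route as the paper's: discard elements lying below others, then use the existence of the compatible meet (Lemma~\ref{lem:macron}) together with the ramified condition to conclude that the remaining incomparable elements have zero meet, hence are orthogonal by Lemma~\ref{lem:buffs}. The only organizational difference is that you perform the antichain reduction up front rather than interleaving it with the orthogonality check, and you rightly note that the ramified hypothesis is only being applied to a nonzero common lower bound.
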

\begin{proof} Consider the element $a_{1}$ and any $a_{i}$ where $2 \leq i \leq n$.
Since $a_{1} \sim a_{i}$ their meet exists.
Suppose that $a_{1} \wedge a_{i} = 0$.
Then by Lemma~\ref{lem:macron}, we deduce that $a_{1}$ and $a_{i}$ are orthogonal.
Suppose that $a_{1} \wedge a_{i} \neq 0$.
Then since $S$ is ramified either $a_{1} \leq a_{i}$ or $a_{i} \leq a_{1}$.
Without loss of generality, suppose the former.
Then $a_{1}$ may be discarded.
This process can be repeated and we obtain in this way a subset of $\{a_{1}, \ldots, a_{n}\}$
which is orthogonal and still generates $A$.
\end{proof}

It now follows, as claimed, that \cite{Law2007} establishes $\mathsf{D}(P_{n})$ as precisely the set of all isomorphisms between finitely generated right ideals of $A^{\ast}$.
The idempotents of $\mathsf{D}(P_{n})$ form a distributive lattice isomorphic to the distributive lattice of finitely generated right ideals of $A^{\ast}$ under subset inclusion.
The intersection of two finitely generated right ideals is a finitely generated right ideal
and the union of two finitely generated right ideals is a finitely generated right ideal.
It follows that $\mathsf{D}(P_{n})$ is a distributive subalgebra of $I(A^{\ast})$.
Thus by Section~2.4, to compute the Booleanization of $\mathsf{D}(P_{n})$ it will be enough to compute the Boolean hull of $\mathsf{D}(P_{n})$ in $I(A^{\ast})$.
To do this, it is convenient to use terminology and notation from language theory \cite{Pin}.

I will use regular expressions to describe languages so $+$ means $\cup$ and singleton sets are denoted by their elements.
A language $L$ over $A$ is said to be {\em definite}\footnote{Strictly speaking, this should be {\em reverse definite}.} if $L = X + YA^{\ast}$ where both $X$ and $Y$ are finite languages.
It is well-known from the theory of regular languages \cite{Pin} that the set of definite languages in $A^{\ast}$ forms a Boolean algebra with respect to set intersection, union and complementation.

\begin{lemma} 
The set of definite languages is generated as a Boolean algebra by the finitely generated right ideals of $A^{\ast}$.
\end{lemma}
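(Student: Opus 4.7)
The plan is to verify the two containments between the Boolean algebra $\mathcal{B}$ generated by the finitely generated right ideals of $A^{\ast}$ and the Boolean algebra $\mathcal{D}$ of definite languages.

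For $\mathcal{B} \subseteq \mathcal{D}$, it suffices to observe that each finitely generated right ideal $PA^{\ast}$, with $P \subseteq A^{\ast}$ finite, is by definition a definite language (take $X = \varnothing$ and $Y = P$ in $X + YA^{\ast}$). Since $\mathcal{D}$ is already known to be a Boolean algebra under intersection, union and complementation, it contains the Boolean algebra generated by these ideals.

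For the reverse containment $\mathcal{D} \subseteq \mathcal{B}$, the key step---and essentially the only nontrivial one---is to express each singleton $\{x\} \subseteq A^{\ast}$ as a Boolean combination of finitely generated right ideals. The identity I would use is
\[
\{x\} \;=\; xA^{\ast} \;\setminus\; \bigcup_{a \in A} xaA^{\ast},
\]
which holds because every element of $xA^{\ast}$ is either $x$ itself or of the form $xa w$ for some $a \in A$ and $w \in A^{\ast}$. Since $A$ is finite and each $xaA^{\ast}$ is a finitely generated right ideal contained in the finitely generated right ideal $xA^{\ast}$, this displays $\{x\} \in \mathcal{B}$.

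Given this, an arbitrary definite language $L = X + YA^{\ast}$ with $X, Y$ finite is handled in two pieces: the finite language $X$ is the finite union $\bigcup_{x \in X} \{x\}$ of singletons (hence in $\mathcal{B}$ by the previous paragraph), while $YA^{\ast} = \bigcup_{y \in Y} yA^{\ast}$ is a finite union of finitely generated right ideals (hence also in $\mathcal{B}$). Their union $L$ therefore lies in $\mathcal{B}$, completing the proof. The main---really the only---obstacle is spotting the singleton identity above; once that is in hand the rest is immediate from the definition of a definite language and the closure of $\mathcal{B}$ under finite unions.
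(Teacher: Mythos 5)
Your proof is correct and follows essentially the same route as the paper: the key identity $\{x\} = xA^{\ast}\setminus xAA^{\ast}$ (your union $\bigcup_{a\in A}xaA^{\ast}$ is exactly $xAA^{\ast}$) is precisely the observation the paper uses, and the rest of your argument matches the paper's decomposition of a definite language into a finite part and a finitely generated right ideal, together with the remark that the definite languages already form a Boolean algebra for the reverse containment.
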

\begin{proof} Denote by $\mathscr{B}$ the Boolean subalgebra of $\mathsf{P}(A^{\ast})$, the power set of $A^{\ast}$, generated by the finitely generated right ideals of $A^{\ast}$.
Observe that $\{x\} = xA^{\ast} \setminus xAA^{\ast}$.
Thus $\mathscr{B}$ contains all finite languages and so all unions of finite languages and finitely generated right ideals.
Thus $\mathscr{B}$ contains all definite languages.
But the set of definite languages is a Boolean algebra.
\end{proof}

It follows that the set of definite languages is the Booleanization of the distributive lattice of finitely generated right ideals.
We shall construct a Boolean inverse submonoid of $I(A^{\ast})$ whose Boolean algebra of idempotents is isomorphic to the set of definite languages over $A$.
Before we do that, it is useful to make some simple observations about definite languages.

An element $x \in L$ of a definite language is said to be {\em unbounded} if $xA^{\ast} \subseteq L$ otherwise it is said to be {\em bounded}.
Every definite language $L$ can be written as a disjoint union $L = X_{1} + X_{2}$ where $X_{1}$ are the bounded elements of $L$ and $X_{2}$ are the unbounded elements.
The set $X_{1}$ is finite and the set $X_{2}$ is a finitely generated right ideal.
The set $X_{2}$ has a minimum generating set which is a prefix code \cite{Birget}. 
We say that a definite language $L$ is in {\em normal form} if it is written $L = X + YA^{\ast}$ where $YA^{\ast}$ are all the unbounded elements, $Y$ is a prefix code, and $X$ are all the bounded elements.

\begin{example}{\em The following is adapted from \cite{Br}.
Let $L = 0 + 201 + 212 + (00+ 20+ 01 + 02)(0+1+2)^{\ast}$.
This is a definite language.
We now convert it into normal form.
We show first that $0$ is unbounded.
Observe that $0(0+1+2)^{\ast} = 0 + 00 + 01 + 02 + (00 + 01 + 02)(0 + 1 + 2)^{\ast}$.
It follows that
$L = 201 + 212 + (0 +00 + 20+ 01 + 02)(0+1+2)^{\ast}$.
But $202$ is unbounded because $20(0+1+2)^{\ast} \subseteq L$.
It follows that
$L = 212 + (0 +00 + 20+ 01 + 02 + 201)(0+1+2)^{\ast}$.
Now we observe that $(0 +00 + 20+ 01 + 02 + 201)(0+1+2)^{\ast} = (0 + 20)(0+1+2)^{\ast}$.
Thus $L = 212 + (0 + 20)(0+1+2)^{\ast}$, which is in normal form.}
\end{example}

The proof of the following is now routine.

\begin{lemma} 
Two definite languages are equal if and only if their normal forms are the same.
\end{lemma}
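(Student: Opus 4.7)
The plan is to show that the bounded/unbounded decomposition of a definite language $L$ depends only on $L$, so the normal form is forced. The ``if'' direction of the biconditional is immediate: identical normal forms define the same language. The content lies in the converse.

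First I would verify that the bounded/unbounded partition is intrinsic: an element $x \in L$ is unbounded precisely when $xA^{\ast} \subseteq L$, which is a property of $L$ alone. Hence the set $L_u$ of unbounded elements and the set $L_b = L \setminus L_u$ of bounded elements are determined by $L$. In any normal form $L = X + YA^{\ast}$, the definition of normal form declares that $YA^{\ast} = L_u$ and $X = L_b$, and these two sets are automatically disjoint. So if $L_1 = X_1 + Y_1 A^{\ast}$ and $L_2 = X_2 + Y_2 A^{\ast}$ are both in normal form and $L_1 = L_2$, then $X_1 = (L_1)_b = (L_2)_b = X_2$ and $Y_1 A^{\ast} = (L_1)_u = (L_2)_u = Y_2 A^{\ast}$.

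It remains to conclude that $Y_1 = Y_2$. Here I would invoke the fact recalled before Lemma~\ref{lem:cornwall}: for each right ideal $R$ of $A^{\ast}$ there is a unique prefix code $P$ with $R = PA^{\ast}$ (\cite[Lemma A.1]{Birget}). Since both $Y_1$ and $Y_2$ are prefix codes generating the same right ideal, they must coincide. Combining this with the previous step gives the normal form uniqueness.

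I do not expect any real obstacle: the argument is essentially a bookkeeping exercise around the intrinsic nature of the bounded/unbounded split. The one subtle point worth stating carefully is that ``being unbounded'' is defined relative to $L$, so one must check that bounded elements from the finite part cannot accidentally become unbounded through overlap with the right-ideal part, which is precisely what the normal form condition rules out by fiat.
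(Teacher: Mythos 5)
Your proof is correct, and it is exactly the routine argument the paper has in mind (the paper omits the proof entirely, declaring it routine): the unbounded/bounded partition is intrinsic to $L$, which pins down $X$ and $YA^{\ast}$, and the uniqueness of the prefix code generating a right ideal pins down $Y$. Nothing to add.
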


Let $L_{1}$ and $L_{2}$ be definite languages.
A bijection $\alpha \colon L_{1} \rightarrow L_{2}$ is said to be {\em permissible} if it satisfies the following two conditions:
\begin{enumerate}

\item $\alpha$ maps bounded elements to bounded elements and unbounded elements to unbounded elements.

\item If $x \in L_{1}$ is an unbounded element and $y \in A^{\ast}$ is arbitrary then $\alpha (xy) = \alpha (x)y$.

\end{enumerate}

For convenience, we list the notation we shall be using:
\begin{itemize}

\item $I(A^{\ast})$ is the symmetric inverse monoids of all partial bijections on the set $A^{\ast}$.

\item $I_{f}(A^{\ast})$ is the inverse semigroup of all partial bijections between the finite subsets of $A^{\ast}$.

\item $R(A^{\ast})$ is the inverse semigroup of all isomorphisms between finitely generated right ideals of $A^{\ast}$.

\item $CT(A^{\ast})$ is the set of all permissible maps between definite languages.
Clearly, the idempotent elements here are the identity functions on the definite languages.

\end{itemize}

It follows that each permissible map is a disjoint union of an element of $I_{f}(A^{\ast})$ and an element of $R(A^{\ast})$.

Let $S$ be a distributive inverse semigroup.
An {\em additive ideal} $I$ of $S$ is a semigroup ideal which is also closed under binary compatible joins.
The proof of the following is immediate.

\begin{lemma}\label{lem:stuff} 
$I_{f}(A^{\ast})$ is an additive ideal of $I(A^{\ast})$.
\end{lemma}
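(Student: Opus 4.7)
The plan is to verify the two defining conditions of an additive ideal directly from the definitions, exploiting the explicit description of $I(A^{\ast})$ as the symmetric inverse monoid of partial bijections on $A^{\ast}$.

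First I would check that $I_f(A^\ast)$ is a two-sided semigroup ideal of $I(A^\ast)$. Let $f \in I_f(A^\ast)$ and $g \in I(A^\ast)$. In the symmetric inverse monoid, $\mathbf{d}(f)$ and $\mathbf{r}(f)$ correspond to the (finite) domain and range of $f$, viewed as idempotents, while the natural partial order is restriction. Since $\mathbf{r}(fg) \leq \mathbf{r}(f)$, the range of the partial bijection $fg$ is contained in the finite range of $f$; as $fg$ is a partial bijection, its domain has the same cardinality as its range, hence is also finite. Thus $fg \in I_f(A^\ast)$. A symmetric argument, using $\mathbf{d}(gf) \leq \mathbf{d}(f)$, gives $gf \in I_f(A^\ast)$.

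Next I would verify closure under compatible joins. Suppose $f, g \in I_f(A^\ast)$ are compatible in $I(A^\ast)$. Recall that in the symmetric inverse monoid, compatibility of two partial bijections is precisely the condition that their set-theoretic union is again a partial bijection, and in that case the join $f \vee g$ is exactly this union. Consequently the domain of $f \vee g$ is the union of the (finite) domains of $f$ and $g$, which is finite, so $f \vee g \in I_f(A^\ast)$.

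These two observations together are exactly the definition of an additive ideal, so the proof is complete. I do not anticipate any obstacle: everything reduces to the cardinality bookkeeping that composition and compatible union of finitely supported partial bijections remain finitely supported, matching the author's remark that the proof is immediate.
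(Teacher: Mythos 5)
Your proof is correct and is precisely the routine verification the paper omits (it states only that ``the proof of the following is immediate''): the ideal property follows from $\mathbf{r}(fg)\leq\mathbf{r}(f)$ and $\mathbf{d}(gf)\leq\mathbf{d}(f)$ together with the fact that a partial bijection with finite range (or domain) lies in $I_{f}(A^{\ast})$, and closure under compatible joins follows since the join of compatible partial bijections is their union. Nothing further is needed.
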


The proof of the following is also straightforward.

\begin{lemma}\label{lem:duke} Let $U$ be a distributive inverse semigroup.
Let $S$ and $T$ be distributive inverse subsemigroups where both are closed under binary compatible joins
and where $S$ is an (additive) ideal.
Put $V = \{s \vee t \colon s \in S, t \in T, s \perp t \}$.
Then $V$ is a distributive inverse subsemigroup of $U$.
\end{lemma}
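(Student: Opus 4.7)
The plan is to verify closure of $V$ under three operations: inversion, multiplication, and binary compatible joins. Together these confirm that $V$ is a distributive inverse subsemigroup of $U$. The first two properties should go through straightforwardly using distributivity and the ideal hypothesis; the third is where I expect the real work to lie.

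\emph{Inverses.} If $v = s \vee t \in V$ with $s \in S$, $t \in T$, $s \perp t$, then $v^{-1} = s^{-1} \vee t^{-1}$, with $s^{-1} \in S$ and $t^{-1} \in T$ since both are inverse subsemigroups, and $s^{-1} \perp t^{-1}$ because the relations $st^{-1} = 0 = s^{-1}t$ yield upon inversion $ts^{-1} = 0 = t^{-1}s$.

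\emph{Products.} Given $v_i = s_i \vee t_i \in V$, distributivity in $U$ gives
$$v_1 v_2 = s_1 s_2 \vee s_1 t_2 \vee t_1 s_2 \vee t_1 t_2.$$
The first three summands lie in $S$ because $S$ is a semigroup ideal, and being pairwise compatible (all dominated by the total product) their join $s := s_1 s_2 \vee s_1 t_2 \vee t_1 s_2$ lies in $S$ by closure of $S$ under compatible joins. Set $t := t_1 t_2 \in T$. Orthogonality $s \perp t$ follows from Lemma~\ref{lem:jersey}: $s_1 \perp t_1$ right-multiplied by $t_2$ gives $s_1 t_2 \perp t_1 t_2$; $s_2 \perp t_2$ left-multiplied by $t_1$ gives $t_1 s_2 \perp t_1 t_2$; and $s_1 s_2 (t_1 t_2)^{-1} = s_1 (s_2 t_2^{-1}) t_1^{-1} = 0$ with the dual calculation $(s_1 s_2)^{-1}(t_1 t_2) = s_2^{-1}(s_1^{-1}t_1) t_2 = 0$. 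Hence $v_1 v_2 = s \vee t \in V$.

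\emph{Binary compatible joins (the main obstacle).} Given compatible $v_1 = s_1 \vee t_1$ and $v_2 = s_2 \vee t_2$ in $V$, I must exhibit $s \in S$, $t \in T$ with $s \perp t$ and $s \vee t = v_1 \vee v_2$. Compatibility of $v_1$ and $v_2$ forces only pairwise compatibility of the components $s_1, s_2, t_1, t_2$ (each mixed product like $s_i^{-1}t_j$ sits below the idempotent $v_1^{-1}v_2$ and is therefore itself idempotent), not orthogonality between cross-pairs. Consequently the naïve candidate $s := s_1 \vee s_2, t := t_1 \vee t_2$ satisfies $s \vee t = v_1 \vee v_2$ but in general $s \not\perp t$, since the cross-meets $s_i \wedge t_j$ (for $i \neq j$) may be non-zero.

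The plan for the fix is to exploit the ideal property of $S$ to absorb the overlapping parts of the $t_j$'s into the $S$-side while restricting $t_1 \vee t_2$ to a complementary idempotent. Each cross-meet $s_i \wedge t_j = s_i s_i^{-1} t_j$ lies in $S$ because $s_i \in S$ and $S$ is an ideal. The hard part is to show that the corresponding ``complementary'' restriction of $t_1 \vee t_2$ still lies in $T$, which is delicate because $T$ is only distributive (not Boolean) and need not support arbitrary idempotent subtractions. The expected path is to realise the restricted element as a compatible join of products of the $t_j$'s with idempotents extracted from $\mathbf{d}(s_i)$ and $\mathbf{r}(s_i)$, making use of the pairwise compatibilities $s_i \sim t_j$ together with the algebraic formula $a \wedge b = a a^{-1} b$ from Lemma~\ref{lem:macron}(2) to rewrite the needed terms. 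Closure of $T$ under multiplication and compatible joins then places this restricted element in $T$; orthogonality with the enlarged $S$-part is verified via Lemma~\ref{lem:buffs}, and equality with $v_1 \vee v_2$ is a consequence of distributivity in $U$.
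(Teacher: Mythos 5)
Your handling of inverses and products is correct: the orthogonality check $s_1s_2 \perp t_1t_2$ via the two factorizations $s_1(s_2t_2^{-1})t_1^{-1}$ and $s_2^{-1}(s_1^{-1}t_1)t_2$, together with Lemma~\ref{lem:jersey} for the mixed terms, is exactly right, and a join of elements each orthogonal to $t_1t_2$ is again orthogonal to it. The gap is in the join case, and it is not one that can be patched along the lines you sketch. You propose to absorb the cross-meets $s_i \wedge t_j$ into the $S$-component and then to replace $t_1 \vee t_2$ by a ``complementary restriction'' which you hope to realise inside $T$ as a compatible join of products of the $t_j$ with idempotents drawn from the $\mathbf{d}(s_i)$ and $\mathbf{r}(s_i)$. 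That element need not exist in $U$ at all, let alone in $T$. Concretely, let $U$ be the sublattice $\{\emptyset,\{2\},\{1,2\},\{2,3\},\{1,2,3\}\}$ of the power set of $\{1,2,3\}$, viewed as a commutative idempotent distributive inverse semigroup under intersection; let $S=\{\emptyset,\{2\},\{1,2\}\}$, an additive ideal, and $T=\{\emptyset,\{2,3\}\}$. Then $\{1,2\}=\{1,2\}\vee\emptyset$ and $\{2,3\}=\emptyset\vee\{2,3\}$ both lie in $V$, but their (compatible) join $\{1,2,3\}$ admits no orthogonal decomposition $s\vee t$ with $s\in S$ and $t\in T$: taking $t=\{2,3\}$ would force $s=\{1\}$, which is not even an element of $U$. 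So no amount of multiplying the $t_j$ by idempotents and re-joining can manufacture the element you need; the statement genuinely requires the ambient semigroup to supply relative complements, and in the paper the lemma is only ever invoked with $U=I(A^{\ast})$, which is Boolean.

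Once $\mathsf{E}(U)$ is Boolean the join case does become straightforward, but by shrinking the opposite component from the one you chose. Keep $t=t_1\vee t_2\in T$ intact (all four of $s_1,s_2,t_1,t_2$ are pairwise compatible, exactly as you observe, so this join exists and $s_1\vee s_2\sim t$), and set $s=(s_1\vee s_2)\setminus\bigl((s_1\vee s_2)\wedge t\bigr)$. By Lemma~\ref{lem:gove} we get $s\perp t$ and $s\vee t=(s_1\vee s_2)\vee(t_1\vee t_2)=v_1\vee v_2$. The point you were missing is that membership of $s$ in $S$ is then free: $s\leq s_1\vee s_2\in S$, and a semigroup ideal is automatically closed downwards (if $x\leq y\in S$ then $x=y\,\mathbf{d}(x)\in SU\subseteq S$). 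Cutting down on the $T$ side demands a closure property that $T$ does not have; cutting down on the $S$ side costs nothing precisely because $S$ is the ideal.
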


\begin{proposition}\label{prop:lord} 
$CT(A^{\ast})$ is a Boolean inverse $\wedge$-monoid.
\end{proposition}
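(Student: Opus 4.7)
The plan is to view $CT(A^{\ast})$ as an instance of the construction in Lemma~\ref{lem:duke}, applied inside the pseudogroup $U=I(A^{\ast})$ with $S=I_{f}(A^{\ast})$ and $T=R(A^{\ast})$. The remark just before the proposition already tells us that every permissible map $\alpha\colon L_{1}\to L_{2}$ decomposes canonically as $\alpha=\alpha_{b}\sqcup\alpha_{u}$, where $\alpha_{b}$ is the restriction to the finite set of bounded points and lies in $I_{f}(A^{\ast})$, and $\alpha_{u}$ is the restriction to the unbounded part and lies in $R(A^{\ast})$. Conversely, every orthogonal join $s\vee t$ with $s\in I_{f}(A^{\ast})$ and $t\in R(A^{\ast})$ is a permissible map between definite languages, with $s$ contributing the bounded parts and $t$ the unbounded parts. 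Hence $CT(A^{\ast})$ is exactly the subsemigroup $V$ produced by Lemma~\ref{lem:duke}.

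To invoke Lemma~\ref{lem:duke} I must check that both $S$ and $T$ are distributive inverse subsemigroups closed under binary compatible joins, and that $S$ is an additive ideal. For $R(A^{\ast})$ this was already noted in the text (the union of two compatible right-ideal isomorphisms is an isomorphism between finitely generated right ideals); for $I_{f}(A^{\ast})$ it is immediate; and $I_{f}(A^{\ast})$ is an additive ideal of $I(A^{\ast})$ by Lemma~\ref{lem:stuff}. Thus $CT(A^{\ast})$ is a distributive inverse subsemigroup of $I(A^{\ast})$. It is a monoid with identity $\mathrm{id}_{A^{\ast}}$, since $A^{\ast}=\{\varepsilon\}A^{\ast}$ is a (fully unbounded) definite language. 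Its idempotents are the identity maps on definite languages, so $\mathsf{E}(CT(A^{\ast}))$ is isomorphic to the Boolean algebra of definite languages over $A$, and Proposition~\ref{prop:definition} then forces $CT(A^{\ast})$ to be Boolean.

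The remaining task, and the principal obstacle, is to verify that binary meets exist. The meet in $I(A^{\ast})$ is intersection of graphs, so it suffices to show $\alpha\cap\beta\in CT(A^{\ast})$ for all $\alpha,\beta\in CT(A^{\ast})$; since this intersection is the greatest lower bound in $I(A^{\ast})$, it will then be the meet in $CT(A^{\ast})$ as well. Writing $\alpha=\alpha_{b}\sqcup\alpha_{u}$ and $\beta=\beta_{b}\sqcup\beta_{u}$, the intersection splits as an orthogonal join of four pieces, the three involving at least one of $\alpha_{b},\beta_{b}$ having finite support and so lying in $I_{f}(A^{\ast})$. For the crucial piece $\alpha_{u}\cap\beta_{u}$, let $\alpha_{u}\colon P_{1}A^{\ast}\to P_{2}A^{\ast}$ and $\beta_{u}\colon Q_{1}A^{\ast}\to Q_{2}A^{\ast}$ with tables $T_{\alpha},T_{\beta}$, and write the finitely generated right ideal $P_{1}A^{\ast}\cap Q_{1}A^{\ast}$ as $EA^{\ast}$ for a finite prefix code $E$. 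For each $e\in E$ there are unique $p\in P_{1}$, $q\in Q_{1}$ and strings $u_{e},v_{e}\in A^{\ast}$ with $e=pu_{e}=qv_{e}$, and on $eA^{\ast}$ the two maps send $ey$ to $T_{\alpha}(p)u_{e}y$ and $T_{\beta}(q)v_{e}y$ respectively, agreeing for every $y$ precisely when $T_{\alpha}(p)u_{e}=T_{\beta}(q)v_{e}$. Hence the agreement set is a union $\bigcup_{e\in E'}eA^{\ast}$ for some $E'\subseteq E$, a finitely generated right ideal, and the restriction of $\alpha_{u}$ to it is an isomorphism onto a finitely generated right ideal; so $\alpha_{u}\cap\beta_{u}\in R(A^{\ast})$. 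Consequently $\alpha\cap\beta$ is an orthogonal join of an element of $I_{f}(A^{\ast})$ and an element of $R(A^{\ast})$, hence lies in $CT(A^{\ast})$, completing the proof that $CT(A^{\ast})$ is a Boolean inverse $\wedge$-monoid.
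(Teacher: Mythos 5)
Your overall strategy is reasonable and your table computation of $\alpha_{u}\cap\beta_{u}$ (using right-cancellativity in the free monoid) is correct, but there is a genuine error at the foundation of the argument: the claim that \emph{every} orthogonal join $s\vee t$ with $s\in I_{f}(A^{\ast})$ and $t\in R(A^{\ast})$ is a permissible map is false, so $CT(A^{\ast})$ is contained in, but is not equal to, the semigroup $V$ of Lemma~\ref{lem:duke}. For a counterexample over $A=\{a,b\}$, let $s$ be the partial bijection $a\mapsto b$ with domain $\{a\}$ and let $t\colon aAA^{\ast}\rightarrow bAA^{\ast}$ be the right-ideal isomorphism with table $aa\mapsto bb$, $ab\mapsto ba$. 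Then $s\perp t$ and the domain of $s\vee t$ is $\{a\}\cup aAA^{\ast}=aA^{\ast}$, in which $a$ is unbounded; but $(s\vee t)(a\cdot ax)=bbx$ whereas $(s\vee t)(a)\cdot ax=bax$, so condition (2) of permissibility fails (one can arrange for condition (1) to fail just as easily). This error propagates to two places. First, since $CT(A^{\ast})$ is only a proper subset of $V$, Lemma~\ref{lem:duke} does not hand you the closure of $CT(A^{\ast})$ under products, inverses and compatible joins: that has to be verified against the permissibility conditions themselves. Second, the final step of your meet argument, ``hence lies in $CT(A^{\ast})$'', appeals exactly to the false inclusion $V\subseteq CT(A^{\ast})$: you have shown that $\alpha\cap\beta$ is an orthogonal join of an element of $I_{f}(A^{\ast})$ and an element of $R(A^{\ast})$, which is not enough.

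The meet computation can be repaired: having shown that $\gamma=\alpha\cap\beta$ has definite domain, check permissibility directly. If $xA^{\ast}$ is contained in the domain of $\gamma$ then $x$ is unbounded in the domains of both $\alpha$ and $\beta$, whence $\gamma(xy)=\alpha(xy)=\alpha(x)y=\gamma(x)y$ for all $y$, and this together with the same observation applied to $\gamma^{-1}=\alpha^{-1}\cap\beta^{-1}$ yields both permissibility conditions. For comparison, the paper reaches the meets by a different route: it invokes Leech's criterion, so that it suffices to show that the fixed-point set $\mbox{Fix}(\alpha)$ of a single permissible map is a definite language, which it reads off from the normal form $L=L_{1}+L_{2}A^{\ast}$ of the domain. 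Your direct computation of binary intersections is more explicit and, once the permissibility check above is inserted and the closure properties of $CT(A^{\ast})$ are established independently of Lemma~\ref{lem:duke}, it is a valid alternative.
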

\begin{proof} By Lemma~\ref{lem:stuff}, $CT(A^{\ast})$ is a distributive inverse monoid 
with a Boolean algebra of idempotents and so is a Boolean inverse monoid.
It remains to show that it has all binary meets.
This is equivalent \cite{Leech} to proving the following.
Let $\alpha \colon L \rightarrow M$ be a permissible map between definite languages.
Define $\mbox{Fix}(\alpha) = \{x \in L \colon \alpha (x) = x\}$.
Then $F$ is a definite language.
If $F = \varnothing$ then we are done,
so in what follows we can assume that $F \neq \varnothing$.
Let $L = L_{1} + L_{2}A^{\ast}$ be the normal form of $L$.
We prove that 
$$\mbox{Fix}(\alpha) = \mbox{Fix}(\alpha | L_{1}) + \mbox{Fix}(\alpha | L_{2})A^{\ast}.$$
It is clear that the right hand side is contained in the left hand side.
Observe that if $x$ is an unbounded element of $L$ and is fixed by $\alpha$ then $\alpha$ also fixes all elements of $xA^{\ast}$.
We prove that the left hand side is contained in the right hand side.
Let $x \in \mbox{Fix}(\alpha)$ be unbounded.
Then $x \in L_{2}A^{\ast}$.
We can therefore write $x = py$ where $p \in L_{2}$.
We have that $\alpha (x) = \alpha (py) = \alpha (p)y$.
But by assumption $\alpha (x) = x$.
Thus $\alpha (p) = p$.
It follows that $p \in \mbox{Fix}(\alpha | L_{2})$ and so $x \in \mbox{Fix}(\alpha | L_{2})A^{\ast}$.
If $x$ is bounded then it is immediate that $x \in \mbox{Fix}(\alpha | L_{1})$. 
\end{proof}

We now come to our main theorem.

\begin{theorem}\label{them:TC} 
The Boolean inverse monoid $CT(A^{\ast})$ is the Booleanization of the polycyclic inverse monoid $P_{n}$.
\end{theorem}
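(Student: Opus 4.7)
The plan is to apply Theorem~\ref{them:freedom}. By Remark~\ref{rem:berk} we may replace $P_n$ with its distributive completion, and the discussion preceding the theorem identifies $\mathsf{D}(P_n)$ with $R(A^{\ast})$, noting that it sits as a distributive subalgebra of the Boolean inverse $\wedge$-monoid $I(A^{\ast})$ (the idempotent lattice of finitely generated right ideals is closed under intersection and union inside the power-set Boolean algebra). Hence it suffices to show that the Boolean hull $\mathsf{B}_{I(A^{\ast})}(R(A^{\ast}))$, as defined in Lemma~\ref{lem:paterson}, is exactly the inverse submonoid $CT(A^{\ast})$ of permissible maps between definite languages. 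I will prove the two inclusions.

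For the inclusion $\mathsf{B}_{I(A^{\ast})}(R(A^{\ast})) \subseteq CT(A^{\ast})$, consider a generator $a \setminus b$ where $b \leq a$ in $R(A^{\ast})$. Writing $a\colon R_1 \to R_2$ and $b\colon R_1' \to R_2'$ with $R_1' \subseteq R_1$ and $a|_{R_1'} = b$, the element $a \setminus b$ is the restriction of $a$ to the set-theoretic difference $R_1 \setminus R_1' \to R_2 \setminus R_2'$. Both sides are definite languages, because the earlier lemma identifies definite languages with the Boolean subalgebra of $\mathsf{P}(A^{\ast})$ generated by the finitely generated right ideals. Since $a$ is a right-ideal morphism, its restriction automatically satisfies the equivariance condition on unbounded elements, and $a$ preserves the bounded/unbounded dichotomy (an element $x \in R_1 \setminus R_1'$ is unbounded in $R_1 \setminus R_1'$ precisely when $xA^{\ast} \cap R_1' = \varnothing$, which happens iff $a(x)A^{\ast} \cap R_2' = \varnothing$). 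Thus $a \setminus b \in CT(A^{\ast})$, and closure of $CT(A^{\ast})$ under finite compatible joins (which follows from Proposition~\ref{prop:lord}) finishes this direction.

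For the reverse inclusion $CT(A^{\ast}) \subseteq \mathsf{B}_{I(A^{\ast})}(R(A^{\ast}))$, let $\alpha\colon L \to M$ be permissible with $L$ in normal form $L = X_1 + X_2 A^{\ast}$, where $X_1$ is the finite set of bounded elements and $X_2$ is a prefix code whose right ideal $X_2 A^{\ast}$ is the set of unbounded elements. By permissibility, $\alpha$ decomposes as an orthogonal join $\alpha = \alpha_1 \sqcup \alpha_2$, where $\alpha_1\colon X_1 \to Y_1$ is a bijection of finite sets and $\alpha_2\colon X_2 A^{\ast} \to Y_2 A^{\ast}$ is a right-ideal isomorphism determined by its restriction to $X_2$. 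Thus $\alpha_2 \in R(A^{\ast})$, which lies in the Boolean hull since $\alpha_2 = \alpha_2 \setminus 0$. For $\alpha_1$, writing it as the disjoint union of singleton maps $\{x_i \mapsto y_i\}$, each singleton is realised as $a_i \setminus b_i$ where $a_i\colon x_iA^{\ast} \to y_iA^{\ast}$ is the obvious right-ideal isomorphism and $b_i\colon x_iAA^{\ast} \to y_iAA^{\ast}$ is its proper restriction; both $a_i, b_i \in R(A^{\ast})$. Taking the resulting finite orthogonal join places $\alpha$ in the Boolean hull.

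The main obstacle is ensuring the normal-form decomposition of a definite language interacts correctly with permissibility, specifically that the bounded part $X_1$ and the unbounded right ideal $X_2 A^{\ast}$ are each $\alpha$-invariant as a pair and that condition (2) of the definition of permissible map exactly reproduces a right-ideal morphism on $X_2 A^{\ast}$. Once this is in hand, the remainder is a straightforward bookkeeping exercise in the symmetric inverse monoid $I(A^{\ast})$, and combining with the identification of the Booleanization of the distributive lattice of finitely generated right ideals with the Boolean algebra of definite languages closes the argument.
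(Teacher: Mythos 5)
Your proposal is correct and follows essentially the same route as the paper: reduce via Theorem~\ref{them:freedom} to computing the Boolean hull of $\mathsf{D}(P_{n})\cong R(A^{\ast})$ in $I(A^{\ast})$, decompose a permissible map as an orthogonal join of a right-ideal isomorphism and a finite bijection, and realise each singleton $u\mapsto v$ as $f\setminus g$ with $f\colon uA^{\ast}\rightarrow vA^{\ast}$ and $g\colon uAA^{\ast}\rightarrow vAA^{\ast}$, which is exactly the paper's key step. The only difference is that you verify the easy inclusion $\mathsf{B}_{I(A^{\ast})}(P_{n})\subseteq CT(A^{\ast})$ by checking directly that each generator $a\setminus b$ is a permissible map between definite languages, where the paper simply appeals to $\mathsf{D}(P_{n})\subseteq CT(A^{\ast})$ and the fact that $CT(A^{\ast})$ is Boolean.
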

\begin{proof} This is almost immediate by the results of Section~2.4.
It devolves to checking that the Boolean hull of $P_{n}$ in $I(A^{\ast})$ is in fact $CT(A^{\ast})$. 
Clearly, the idempotents of $\mathsf{B}_{I(A^{\ast})}(P_{n})$ are the same as the idempotents of $CT(A^{\ast})$
and $\mathsf{B}_{I(A^{\ast})}(P_{n}) \subseteq CT(A^{\ast})$ since $\mathsf{D}(P_{n}) \subseteq CT(A^{\ast})$. 
An element of $CT(A^{\ast})$ is an orthogonal join of an isomorphism between two finitely generated right ideals of $A^{\ast}$ and a bijection between two finite subsets of $A^{\ast}$.
This latter map is itself an orthogonal join of the maps that take one element sets to one element sets.
Let $u,v$ be two strings.
Once we have shown that the partial bijection $u \mapsto v$ belongs to $\mathsf{B}_{I(A^{\ast})}(P_{n})$, our proof will be complete.
Define $f \colon uA^{\ast} \rightarrow vA^{\ast}$ given by $f(ux) = vx$.
Define $g \colon uAA^{\ast} \rightarrow vAA^{\ast}$ given by $g(uax) = vax$ where $a \in A$.
It is clear that $g \leq f$ in the natural partial order.
Observe that $f \setminus g$ is precisely the map $u \mapsto v$. 
\end{proof}

We call the Boolean inverse monoid $CT(A^{\ast})$ the {\em Cuntz-Toeplitz monoid (of degree $n$)} \cite{JSW}.
The rationale for this terminology will now be explained.
We denote by $C_{n}$ the {\em Cuntz inverse monoid} \cite{Law2007b} and use the description given in \cite[Section~5.2]{LS}.
Denote by $A^{\omega}$ the set of all right-infinite strings over the alphabet $A$.
The monoid $C_{n}$ consists of all bijections $f \colon XA^{\omega} \rightarrow YA^{\omega}$, where $X$ and $Y$ are prefix codes,
for which there exists an associated bijection $f_{1} \colon X \rightarrow Y$ such that $f(xw) = f_{1}(x)w$ where $x \in X$ and $w \in A^{\omega}$.

The behaviour of $\wedge$-morphisms between Boolean inverse $\wedge$-semigroups  is analogous to that of the behaviour of homomorphisms between rings.
Let $\theta \colon S \rightarrow T$ be a $\wedge$-morphism between two Boolean inverse $\wedge$-semigroups.
Define the {\em kernel} of $\theta$, denoted by $\mbox{ker}(\theta)$, to be the set of all $s \in S$ such that $\theta (s) = 0$. 
It is easy to check that $\mbox{ker}(\theta)$ is an additive ideal of $S$.

\begin{lemma}\label{lem:kernel} 
Let $\theta$ and $\phi$ be two surjective $\wedge$-morphisms between the Boolean inverse $\wedge$-semigroups  $S$ and $T$.
Then $\theta = \phi$ if and only if  $\mbox{\rm ker}(\theta) = \mbox{\rm ker}(\phi)$.
\end{lemma}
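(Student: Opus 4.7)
The forward implication is immediate: $\theta = \phi$ trivially forces $\ker(\theta) = \ker(\phi)$. The substance is the converse, so suppose $K := \ker(\theta) = \ker(\phi)$. My plan is to establish an algebraic characterization of fibre equality that depends only on $K$, and then promote this to $\theta = \phi$ using the surjectivity of both morphisms.

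The centerpiece is the equivalence
\[
\theta(s) = \theta(t) \ \iff\ s \setminus (s \wedge t) \in K \ \text{and}\ t \setminus (s \wedge t) \in K
\]
for all $s, t \in S$. Its proof uses the Boolean inverse $\wedge$-semigroup structure decisively. Because $S$ is a $\wedge$-semigroup, $s \wedge t$ exists; because $S$ is Boolean, the element $s \wedge t \leq s$ admits the orthogonal complement $s \setminus (s \wedge t)$, giving $s = (s \wedge t) \vee (s \setminus (s \wedge t))$ with the joinands orthogonal by Lemma~\ref{lem:buffs}. Since $\theta$ is a $\wedge$-morphism (hence preserves both meets and compatible joins, and transports orthogonal pairs to orthogonal pairs), applying $\theta$ yields the orthogonal decomposition $\theta(s) = \theta(s \wedge t) \vee \theta(s \setminus (s \wedge t))$ in $T$, together with the analogous decomposition of $\theta(t)$ sharing the summand $\theta(s \wedge t)$. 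Invoking Lemma~\ref{lem:trump} now inside $T$, the orthogonal complement of $\theta(s \wedge t)$ below $\theta(s)$ (respectively $\theta(t)$) is uniquely determined, so $\theta(s) = \theta(t)$ holds if and only if the common value is $\theta(s \wedge t)$, which in turn is equivalent to both $\theta(s \setminus (s \wedge t)) = 0$ and $\theta(t \setminus (s \wedge t)) = 0$, proving the equivalence.

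The right-hand side of this equivalence depends only on $K$, so the identical characterization holds for $\phi$ with the same right-hand side. Combined, this gives $\theta(s) = \theta(t) \iff \phi(s) = \phi(t)$ for all $s, t \in S$; that is, $\theta$ and $\phi$ induce the same fibre partition of $S$. To conclude $\theta = \phi$, fix $s \in S$ and use surjectivity of $\phi$ to pick $t \in S$ with $\phi(t) = \theta(s)$. The congruence equivalence then says $\phi(s) = \phi(t)$ iff $\theta(s) = \theta(t)$, and I aim to promote the equality $\phi(t) = \theta(s)$ into $\theta(s) = \theta(t)$ by running the orthogonal-decomposition argument one more time, now on the pair $\theta(s), \phi(s)$ directly inside $T$.

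The main obstacle I foresee is precisely this closing: \emph{a priori} two surjections with the same fibre partition could differ by a non-trivial $\wedge$-automorphism of the common target $T$. The decisive point is that both $\theta$ and $\phi$ share the single source $S$, so any putative discrepancy between $\theta(s)$ and $\phi(s)$ can be tested by choosing a preimage (under either morphism) of the element $\theta(s) \setminus (\theta(s) \wedge \phi(s))$ in $T$ and comparing its images under $\theta$ and $\phi$; such a preimage, by the displayed equivalence applied in $T$, must lie in one kernel and not the other, contradicting $\ker\theta = \ker\phi$. Making this final contradiction clean is where the detailed care of the argument will lie.
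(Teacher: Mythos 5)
Your central equivalence, that $\theta(s)=\theta(t)$ if and only if $s\setminus(s\wedge t)$ and $t\setminus(s\wedge t)$ both lie in $K$, is exactly the computation in the paper's proof, and your justification of it (orthogonal decomposition over the common meet, then the uniqueness statement of Lemma~\ref{lem:trump} applied in $T$) is correct. The paper stops precisely there: it deduces that $\theta(a)=\theta(b)$ implies $\phi(a)=\phi(b)$ and, by symmetry, that the two morphisms induce the same congruence on $S$. That is all the lemma is subsequently used for --- it guarantees that the additive ideal determines the quotient, which licenses the notation $S/I$ introduced immediately afterwards.

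The gap is in your final step, and it cannot be repaired, because the statement read literally as an equality of functions is false: if $\sigma$ is any non-identity $\wedge$-automorphism of $T$, then $\theta$ and $\sigma\theta$ are surjective $\wedge$-morphisms with the same kernel (automorphisms fix $0$), yet they are unequal. A concrete instance is $S=T=G^{0}$, a non-trivial group with a zero adjoined (a Boolean inverse $\wedge$-monoid whose only idempotents are $0$ and $1$), with $\theta$ the identity and $\phi$ induced by a non-trivial group automorphism: both have kernel $\{0\}$. Your proposed contradiction therefore cannot materialise; concretely, a preimage $u$ of $\theta(s)\setminus(\theta(s)\wedge\phi(s))$ satisfies $\theta(u)\neq 0$, but nothing forces $\phi(u)=0$, because the displayed equivalence compares the images of a single pair of elements of $S$ under a single morphism and says nothing about how $\theta(s)$ and $\phi(s)$ sit relative to one another inside $T$. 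The honest conclusion of your (and the paper's) argument is that $\ker\theta=\ker\phi$ forces $\theta$ and $\phi$ to have the same kernel congruence, equivalently that they agree up to a unique isomorphism of $T$ commuting with the two quotient maps; you should either prove that reformulation or note explicitly that it is what the definition of $S/I$ actually requires.
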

\begin{proof} Suppose that $\theta (a) = \theta (b)$.
Then $\theta (a \setminus (a \wedge b)) = 0 =  \theta (b \setminus (a \wedge b))$.
By assumption $\phi (a \setminus (a \wedge b)) = 0 =  \phi (b \setminus (a \wedge b))$.
Thus 
$$\phi (a) = \phi ((a \setminus (a \wedge b) \vee (a \wedge b)) = \phi (a \wedge b).$$
By symmetry, we get that $\phi (a) = \phi (b)$ and symmetry again delivers the result.
\end{proof}

In the light of the above lemma, we may extend the usual notation from ring theory.
Let $S$ be a Boolean inverse $\wedge$-semigroup and let $I$ be an additive ideal of $S$.
Denote by $S/I$ the Boolean inverse $\wedge$-semigroup $S/\varepsilon_{I}$ where
$\varepsilon_{I}$ is the congruence defined by $(a,b) \in \varepsilon_{I}$ if and only if 
$a\setminus (a \wedge b), b \setminus (a \wedge b) \in I$.

\begin{proposition}\label{prop:CT} 
$CT(A^{\ast})/I_{f}(A^{\ast}) \cong C_{n}$.
\end{proposition}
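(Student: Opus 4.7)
The plan is to exhibit a surjective $\wedge$-morphism $\Phi \colon CT(A^{\ast}) \to C_{n}$ whose kernel is exactly $I_{f}(A^{\ast})$ and then pass to the quotient. Every $\alpha \in CT(A^{\ast})$ decomposes uniquely as an orthogonal join $\alpha = \alpha_{f} \vee \alpha_{u}$ where $\alpha_{f} \in I_{f}(A^{\ast})$ records the action on the bounded points and $\alpha_{u} \in R(A^{\ast})$ is an isomorphism $PA^{\ast} \to QA^{\ast}$ between finitely generated right ideals, specified by its table $\mathsf{T}(\alpha_{u}) \colon P \to Q$ on the minimal prefix code generators. I would define $\Phi(\alpha) \colon PA^{\omega} \to QA^{\omega}$ to be the element of $C_{n}$ carrying the same table, namely $pw \mapsto \mathsf{T}(\alpha_{u})(p)w$ for $p \in P$ and $w \in A^{\omega}$.

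Next I would verify that $\Phi$ is a $\wedge$-morphism. For products, the distributivity calculation
$(\alpha_{f} \vee \alpha_{u})(\beta_{f} \vee \beta_{u}) = \alpha_{f}\beta_{f} \vee \alpha_{f}\beta_{u} \vee \alpha_{u}\beta_{f} \vee \alpha_{u}\beta_{u}$
shows that the first three summands lie in $I_{f}(A^{\ast})$ (each has a bounded factor), so $(\alpha\beta)_{u} = \alpha_{u}\beta_{u}$; composition of tables on minimal prefix codes induces the composition of the associated $C_{n}$-bijections. Inverse and zero are immediate. For compatible joins and meets, pass to a common refinement of the two prefix codes; the key identity $pA^{\omega} = \bigcup_{a \in A} paA^{\omega}$ shows that refinement of a table never alters its extension to $A^{\omega}$, so the operation at infinity is well defined and agrees with the join or meet of the refined tables --- invoking Proposition~\ref{prop:lord} for the existence of meets in $CT(A^{\ast})$.

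For the kernel, $\Phi(\alpha) = 0$ forces $\alpha_{u} = 0$, hence $\alpha = \alpha_{f} \in I_{f}(A^{\ast})$. Surjectivity is direct: any $f \in C_{n}$ with table $f_{1} \colon X \to Y$ is the image of the right-ideal isomorphism $XA^{\ast} \to YA^{\ast}$ carrying the same table, which belongs to $R(A^{\ast}) \subseteq CT(A^{\ast})$. Thus $\Phi$ is a surjective $\wedge$-morphism with kernel $I_{f}(A^{\ast})$ and factors through $\pi \colon CT(A^{\ast}) \to CT(A^{\ast})/I_{f}(A^{\ast})$ as a surjective $\wedge$-morphism $\bar\Phi$. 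Since $\Phi$ preserves the relative complement $s \setminus t$, the identity $\Phi(\alpha) = \Phi(\beta)$ forces both $\alpha \setminus (\alpha \wedge \beta)$ and $\beta \setminus (\alpha \wedge \beta)$ into $\ker\Phi = I_{f}(A^{\ast})$, whence $(\alpha,\beta) \in \varepsilon_{I_{f}(A^{\ast})}$; this yields injectivity of $\bar\Phi$, exactly as in Lemma~\ref{lem:kernel}.

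The main obstacle is the second step. The table description of $R(A^{\ast})$ uses the unique minimal prefix code generators, but two elements of $R(A^{\ast})$ will in general have domains described by different prefix codes, so table-level products, joins, meets, and equality checks have to be carried out on a common refinement, after which one must verify that the resulting $C_{n}$-element is independent of that choice. The invariance of $pA^{\omega}$ under the one-step refinement $\{p\} \rightsquigarrow pA$ is the single combinatorial fact on which the whole comparison hinges: it is simultaneously the reason that refining a minimal prefix code is harmless in $C_{n}$ and the reason that the bounded difference lies in $I_{f}(A^{\ast})$.
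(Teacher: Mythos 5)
Your argument is correct and coincides with the paper's own (second, concrete) proof: the paper defines exactly the same map, $\Theta(f)(yw)=f_{1}(y)w$, read off from the table of the unbounded part, and identifies it as a surjective $\wedge$-morphism with kernel $I_{f}(A^{\ast})$, leaving as routine the verifications (orthogonal decomposition, behaviour under products and refinement of prefix codes, injectivity on the quotient via Lemma~\ref{lem:kernel}) that you spell out. The paper also records a shorter alternative argument using the congruence-freeness of $C_{n}$, which your route does not need.
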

\begin{proof}  Denote by $\equiv$ the congruence relation induced on $CT(A^{\ast})$ by the additive ideal $I_{f}(A^{\ast})$
We prove, first, the following.
Suppose that $R$ and $R'$ are two finitely generated right ideals of $A^{\ast}$
and that $1_{R} \cong 1_{R'}$.
Then $R$ is essential if and only if $R'$ is essential.
Suppose that $R$ is essential.
Then $A^{\ast} \setminus R$ is a finite set by Lemma~\ref{lem:cornwall}.
It follows that $1_{A^{\ast}} \cong 1_{R}$.
Thus $1_{A^{\ast}} \cong 1_{R'}$.
From the definition, $A^{\ast} \setminus R'$ is a finite set and so by Lemma~\ref{lem:cornwall},
$R'$ is also essential.
It is immediate from the above that  $1_{A^{\ast}} \cong 1_{R}$ precisely when $R$ is essential.
This is already enough to tell us that $CT(A^{\ast})/I_{f}(A^{\ast})$ is a homomorphic image of $C_{n}$ by \cite{Law2007b}.
But $C_{n}$ is congruence-free.
Thus as long as the quotient is not trivial it will be isomorphic to $C_{n}$.
But this is clear.

More concretely, we may also prove the result as follows.
Let $f \colon (X_{1} + Y_{1}A^{\ast}) \rightarrow (X_{2} + Y_{2}A^{\ast})$ be a permissible map where $Y_{1}$ and $Y_{2}$ are prefix codes.
Then $f$ induces a bijection $f_{1} \colon Y_{1} \rightarrow Y_{2}$.
Define $\Theta (f) \colon Y_{1}A^{\omega} \rightarrow Y_{2}A^{\omega}$ by $\Theta (f)(yw) = f_{1}(y)w$.
It is clear that $\Theta$ is a surjective $\wedge$-morphism (and a monoid homomorphism)
and that the kernel of $\Theta$ is $I_{f}(A^{\ast})$.
\end{proof}

By Proposition~\ref{prop:CT}, it follows that $CT(A^{\ast})$ is the Boolean inverse monoid  analogue of the {\em Cuntz-Toeplitz algebra}.
See \cite{JSW}, for example.


\end{document}